\theoremstyle{definition}
\newtheorem{Def}{Definition}[section]
\theoremstyle{plain}
\newtheorem{Lem}[Def]{Lemma}
\newtheorem{Cor}[Def]{Corollary}
\newtheorem{Pro}[Def]{Proposition}
\newtheorem{Teo}[Def]{Theorem}
\newtheorem{Con}[Def]{Conjecture}
\newtheorem{TI}{Theorem}
\newtheorem{CI}[TI]{Conjecture}
\theoremstyle{remark}
\newtheorem{Rem}[Def]{Remark}
\newtheorem{Exa}[Def]{Example}
\newcommand{\ord}{\operatorname{ord}}
\newcommand{\Spec}{\operatorname{Spec}}
\newcommand{\Pic}{\operatorname{Pic}}
\newcommand{\vol}{\operatorname{vol}}
\newcommand{\mult}{\operatorname{mult}}
\newcommand{\cent}{\operatorname{center}}
\newcommand{\Proj}{\operatorname{Proj }}
\newcommand{\C}{{\mathbb C }}
\newcommand{\R}{{\mathbb R }}
\newcommand{\Z}{\mathbb{Z}}
\newcommand{\Q}{\mathbb{Q}}
\newcommand{\N}{\mathbb{N}}
\newcommand{\A}{\mathbb{A}}
\renewcommand{\P}{\mathbb{P}}
\newcommand{\I}{{\cal I}}
\newcommand{\T}{{\cal T}}
\renewcommand{\H}{{\cal H}}
\renewcommand{\O}{{\cal O}}
\newcommand{\NE}{ {\rm NE}}
\newcommand{\Mor}{ \overline{\rm NE}}
\newcommand{\Nef}{{\rm Nef}}
\renewcommand\hat{\widehat}
\def\vv{\@ifnextchar[{\@withv}{\@withoutv}}
\def\@withv[#1]#2#3{v(#2,#3;#1)}
\def\@withoutv#1#2{v(#1,#2)}
\def\mm{\@ifnextchar[{\@withm}{\@withoutm}}
\def\@withm[#1]#2#3{\mu_{#1}(#2,#3)}
\def\@withoutm#1#2{\hat\mu(#1,#2)}
\def\keywordname{{\bfseries Keywords}}%
\def\keywords#1{\par\addvspace\medskipamount{\rightskip=0pt plus1cm
\def\and{\ifhmode\unskip\nobreak\fi\ $\cdot$
}\noindent\keywordname\enspace\ignorespaces#1\par}}
\def\subclassname{{\bfseries Mathematics Subject Classification
(2000)}\enspace}
\def\subclass#1{\par\addvspace\medskipamount{\rightskip=0pt plus1cm
\def\and{\ifhmode\unskip\nobreak\fi\ $\cdot$
}\noindent\subclassname\ignorespaces#1\par}}
\begin{document}

\author{Marcin Dumnicki, Brian Harbourne,
Alex K\"uronya,
\\ Joaquim Ro\'e, and Tomasz Szemberg}
\title{Very general monomial valuations of $\P^2$
and a Nagata type conjecture}

\date{}

\maketitle

\begin{abstract}
It is well known that multi-point Seshadri constants
for a small number $t$ of points in the projective plane are submaximal.
It is predicted by the Nagata conjecture that their values are maximal
for $t\geq 9$ points. Tackling the problem in the language of valuations
one can make sense of $t$ points for any real $t\geq 1$.
We show somewhat surprisingly that a Nagata-type conjecture
should be valid for $t\geq 8+1/36$ points and we compute
explicitly all Seshadri constants
(expressed here as the asymptotic maximal vanishing element)
for $t\leq 7+1/9$. In the range $7+1/9\leq t\leq 8+1/36$
we are able to compute some sporadic values.
\keywords{Nagata Conjecture, SHGH Conjecture, Seshadri constants, monomial valuations, anticanonical divisor}
\subclass{MSC 14C20, 13A18}
\end{abstract}

\section{Introduction}
The main purpose of this work is to formulate an
analogue of Nagata's conjecture which makes sense for real values
$t\ge 1$ of the number of points blown up instead of
integral ones.
Using quasi-monomial valuations of the plane
we construct a function $\hat\mu:[1,\infty) \rightarrow \R$,
such that if $\hat\mu(t)=\sqrt{t}$
for all integers $t\ge 9$ then Nagata's conjecture is true.
Moreover we show that $\hat\mu$ is a continuous function and we
propose a conjecture that asserts the equality $\hat\mu(t)=\sqrt{t}$
for $t\geq 8+1/36$ (Conjecture \ref{mainconj}). This
fits well with the expected behavior of linear systems on blow-ups
of $\P^2$, as it would follow from a stronger open conjecture
by G.~M.~Greuel, C.~Lossen and E.~Shustin. On the other hand, the
behavior of the function $\hat\mu$ in the range $7+1/9\leq t\leq 8+1/36$
is somewhat mysterious.

By continuity, it suffices to verify the conjecture at  \emph{rational square}
values of $t$, which boils down to verifying nefness of appropriate
divisor classes with selfintersection zero. Thus Nagata's conjecture
is reduced to proving a statement of a kind which has shown to be tractable,
see \cite{CHMR}, \cite{Bir}.
These selfintersection zero classes live on blow-up configurations
that have not been known earlier to shed light on the original conjecture
for ten or more points.

Going further down this road we
discuss the value of $\hat\mu$ at many non-integral cases and compute it
in a wide range including all $t\le 7+1/9$. As a tool and also as a result
of independent interest, we describe the Mori cone of the related blown up
surfaces whenever they are anticanonical.

\medskip

In our approach valuations are considered
as a generalization of points,  a natural step taken
in many situations ever since Zariski's pioneering work.
In the context of
linear systems defined by multiple base points on projective varieties,
positivity, and Seshadri constants,
it is a point of view which seems to have been explored explicitly
only recently.
In \cite{DKMS} and \cite{BKMS}, S. Boucksom, M. Dumnicki,
A. K\"uronya, C. Maclean, and T. Szemberg introduced the constant
$a_{\max}$ of a valuation (here denoted $\hat\mu$),
analogous to the $s$-invariant introduced by L.~Ein, S.~D.~Cutkosky and R.~Lazarsfeld
in \cite{CEL01} for ideals (see also \cite[5.4]{PAG}). For a valuation $v$
centered at the origin of $\A^2=\Spec \C[x,y]$, one has by definition
$$\hat\mu(v)=\lim_{d\to \infty}\frac{\max\{v(f)\,|\,f\in \C[x,y],{\deg f\le d}\}}{d}.$$
All such invariants encode essentially the same information
as the Seshadri constant does in the case of points and, as is the case for
Seshadri constants, they turn out to be extremely hard to compute.

The last decade has also seen the blossoming of a geometric study of
spaces of \emph{real} valuations (see C.~Favre--M.~Jonsson \cite{FJ04})
or spaces of \emph{seminorms},
usually called Berkovich spaces \cite{Ber90}, which essentially coincide in dimension two
(see M.~Jonsson \cite[section 6]{Jon} for a description in the plane case).
Being compact and arcwise connected,  the topology of such spaces has very interesting and useful
properties.
The work of S.~Boucksom, C.~Favre and M.~Jonsson \cite{BFJ09}, \cite{BFJ}
implicitly reveals connections between such valuation spaces,
positivity, and birational geometry.

\vskip\baselineskip
In this paper  the invariant $\hat\mu$ is studied as a function on
the space $\mathcal V$ of plane valuations of real rank 1.
This invariant turns out to be lower semicontinuous and continuous along arcs in $\mathcal V$
(Theorem \ref{lower-semicontinuous}).
There is no
difficulty in extending the definition of $\hat\mu$ to other
varieties; one obtains a function-invariant for line bundles
whose geometric significance would deserve further study.
%
Motivated by what is known in  the case of points and by the
conjectures of Nagata and Segre--Harbourne--Gimigliano--Hirschowitz,
our  focus will be on valuations along a \emph{very general}
half-line  in $\mathcal V$.

We let $\hat\mu(t)=\hat\mu(v_t)$ with $t\in[1,\infty)$, where $v_t$ is
a very general quasimonomial valuation with characteristic exponent $t$
 (see Section \ref{sec:preliminaries}  
 for precise definitions).

Our main results, Theorems \ref{polyhedral} and
\ref{lastthm}, are the first steps toward the computation of $\hat \mu$.
Divisorial valuations are dense in each arc of the valuation space;
our tools provide a good grip on such valuations, and we work on  the
minimal proper birational model $X_t$ where the center
of $v_t$ is a divisor.
When $X_t$ supports an effective anticanonical divisor,
extensive knowledge of its geometry is available,
see \cite{Har97}, \cite{Harb97}.
In section \ref{sec:anticanonical} we determine the range of $t$
for which $X_t$ is anticanonical, and study the Mori cone of $X_t$
in that range. The following theorem sums up the main results
of section \ref{sec:anticanonical}.
\begin{TI}
Let $v_t$ be a very general quasimonomial valuation on $\P^2$ with
characteristic exponent $t\in\Q$, and let $X_t$ be the minimal model
where $v_t$ has divisorial center. $X_t$ supports an effective
anticanonical divisor if and only if
$1\le t\le 7, t=7+1/n$ for some natural number $n$, or $t=9$.

If $1\le t\le 7$, then the Mori cone $\Mor(X_t)$ is a polyhedral cone,
spanned by the classes of the exceptional
components of $X_t\rightarrow \P^2$,
the class of a particular nodal cubic,
and finitely many $(-1)$-curves (whose number is explicitly bounded,
see \ref{polyhedral}).

If $t=7+1/n$ for natural $n$, then the only prime divisors $C$ in $X_t$
with $C^2\le -2$ are exceptional components, and
$\Mor(X_t)$ is a polyhedral cone
if and only if $n\le 8$.

If $1\le t\le 3$, $t=3+1/n$ for natural $n$, or $t=5$, then the monoid
of effective classes can be generated by the classes of the
components of the exceptional divisor, a particular conic,
and the $(-1)$-curves.
\end{TI}

It is not hard to see that $\hat\mu(t)\ge \sqrt t$, and one
should expect the equality to hold unless
there is a good geometric reason, in the form of
a $(-1)$-curve $C_t$ on $X_t$ with value higher  than $\deg C_s\cdot \sqrt t$ .

 \begin{CI}\label{mainconjintro}
 For every $t\ge 8+1/36$, $\hat\mu(t)=\sqrt{t}$.
 \end{CI}

In section \ref{sec:Nagata} we explore the relations of conjecture
\ref{mainconjintro} and existing conjectures,
showing in particular that Nagata's conjecture
is just a special case of conjecture \ref{mainconjintro}.
If $t$ is an integer, then it is the number of points that have been blown up
to construct $X_t$, and we look at  $\hat\mu(t)$
as a continuous function that interpolates between the
inverses of Seshadri constants at $t$ very general points, whose values
at non-integer $t$ also have geometric meaning.
In addition,  it is not hard to show (Proposition~\ref{squares})
that for  integer values of $t$ that are squares,  $\hat\mu(t)= \sqrt t$ holds.
A further, stronger conjecture, motivated by our main results,
is proposed at the end of section \ref{sec:supraminimal}.

Knowing the cone of curves allows to compute $\hat\mu$, which for
small $t$ is done in section~\ref{sec:supraminimal}. Denote $F_{-1}=1$, $F_0=0$ and $F_{i+1}=F_i+F_{i-1}$ the Fibonacci numbers,
and $\phi=(1+\sqrt{5})/2=\lim F_{i+1}/F_i$ the ``golden ratio''.

\begin{TI} \label{muvalues}
The value of $\hat\mu(t)$ for $t\in [1,\phi^4]$ is given by
  $$\hat\mu(t)=
\begin{cases}
 \frac{F_{i-2}}{F_{i}}\,t & \text{if }t \in \left[\frac{F_{i}^2}{F_{i-2}^2},\frac{F_{i+2}}{F_{i-2}}\right] \, ,\\
 \frac{F_{i+2}}{F_{i}} & \text{if }t \in \left[\frac{F_{i+2}}{F_{i-2}},\frac{F_{i+2}^2}{F_{i}^2}\right] \, ,
\end{cases}
$$
where $i\ge 1$ takes all odd values. For $t\in [\phi^4,7+1/9]$,
  $$\hat\mu(t)=
\begin{cases}
 \frac{1+t}{3} & \text{if }t \in \left[\phi^4,7\right]\, ,\\
 \frac{8}{3} & \text{if }t \in \left[7,7+1/9\right]\, .
\end{cases}
$$
\end{TI}
In particular there is a sequence of \emph{rational} squares $t<8$ with
$\hat\mu(t)= \sqrt t$, with an accumulation point at $\phi^4$;
we suspect that at least some rational squares
$t>9$ can be dealt with by existing techniques, which by continuity of
$\hat \mu$ would allow to compute $\hat\mu(t)$ for nonsquare $t$.

For anticanonical $X_t$
there exists a $(-1)$-curve computing $\hat\mu(t)$.
This implies that $\hat\mu$ is piecewise linear near $t$.
We describe a (countably infinite) family of $(-1)$-curves
from which Theorem \ref{muvalues} follows, and also
determine $\hat\mu(t)$ for other small
values of $t$ (see Figure \ref{graph}).
We conjecture that this list is complete.
If that is indeed so, then in particular
$\hat\mu(t)=\sqrt t$ for $t\ge 8+1/36$.
Except for
9 cases, the  $(-1)$-curves of Section~\ref{sec:supraminimal} are
the same unicuspidal curves which are known to give the asymptotically
extremal ratio between degree and multiplicity,
as explained in  Y.~Orevkov's work \cite{Ore02}
(see also the overview \cite{FLMN07}).

\begin{figure}
 \includegraphics[width=\linewidth,keepaspectratio=true]{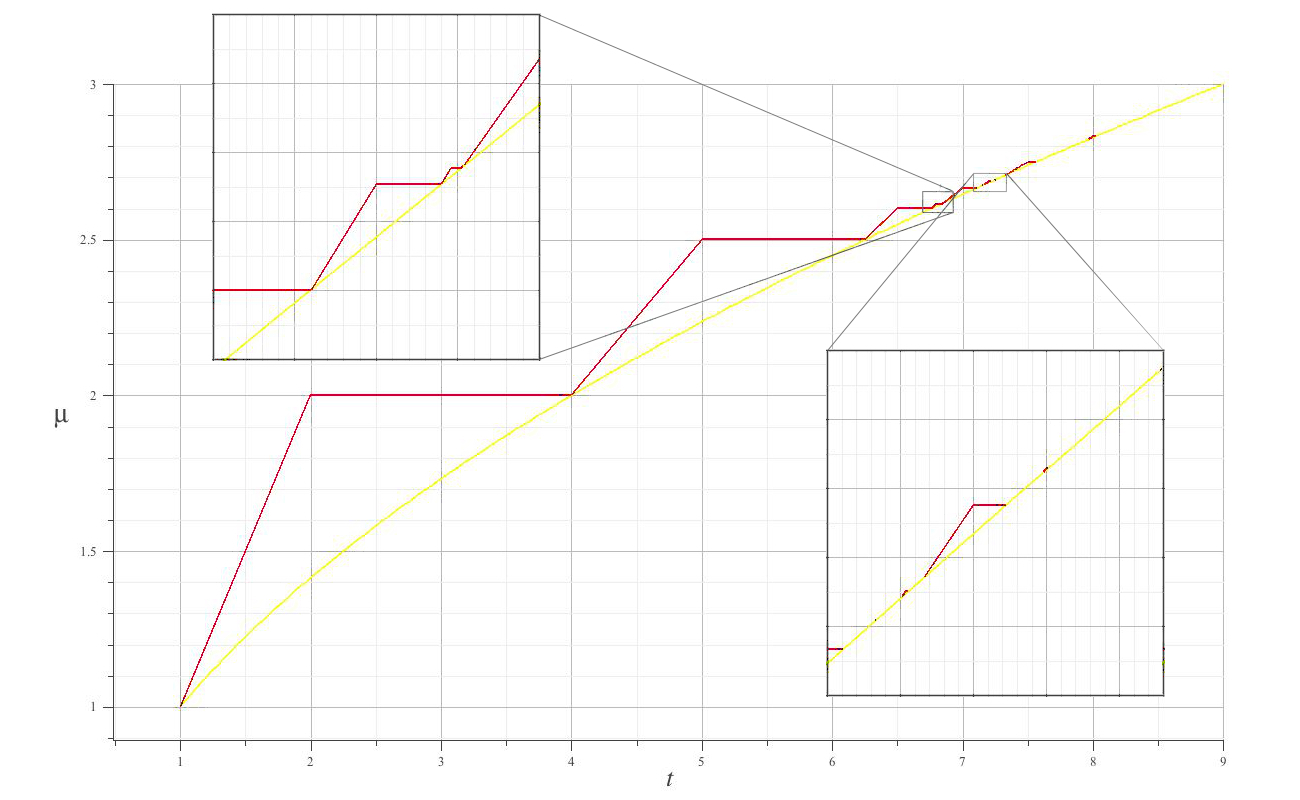}
 \caption{In red, the known behavior of $\hat\mu(t)$ for $t\le 9$; in yellow, the lower bound
 $\sqrt{t}$. \label{graph}}
\end{figure}

In what follows we work over the field of complex numbers.

\section{Preliminaries}
\label{sec:preliminaries}

We refer to the references
O.~Zariski--P.~Samuel \cite[Chapter VI. and Appendix 5.]{ZS75} and
E.~Casas--Alvero \cite[Chapter 8]{Cas00}
for the general theory of valuations and complete ideals on surfaces.
Let us now briefly recall the definitions and facts needed
for the definition of $\hat\mu$ and the statement of the conjecture.

Let $v$ be a rank 1 valuation
(meaning that the value group is an ordered subgroup of $\mathbb{R}$)
on the field of functions $F$ of a projective algebraic surface $S$.
For every effective divisor $D\subset S$, denote
$v(D)$ the value of any equation of $D\cap U$,
where $U$ is an affine chart intersecting $D$.

Following \cite{BKMS},
we denote
\[
\mu_D (v) = \max\{v(D')\,|\, D'\in |D| \}\ , \quad
\text{and} \quad
\hat \mu_D (v) = \lim_{k \to \infty}\frac{\mu_{kD}(v)}{k}\ .
\]
For every non-negative $m\in \R$, the ideal sheaves
\[
\I_m=\{f\in \O_S\,|\, v(f)\ge m \}, \quad \text{and} \quad
\I_m^+=\{f\in \O_S\,|\, v(f)> m \}
\]
are called valuation ideals.
The closed subscheme
defined by $\I_0^+$ is an irreducible subvariety,
called \emph{center} of the valuation, $\cent(v)$.
If $R_v$ denotes the valuation ring of $v$, the generic point of the center
is the image of the closed point under the unique
map $\Spec R_v \rightarrow S$ that exists by the valuative
criterion of properness. Of course, all this continues to apply
if we substitute $S$ by another projective model $S'$
(i.e., a smooth projective surface
with a fixed isomorphism $K(S')\cong F$).

If the $\cent(v)$ is a curve $C$, then $v$ is
(up to a constant $c\in \R$)  the order of vanishing
along $C$; thus, $v(D)=c \cdot \ord_C D=c \cdot \max\{k\,|\,D-kC\ge 0\}$.

We are mostly interested in valuations of $S=\P^2$ such that the $\cent(v)$
is a closed point. In this case the volume of $v$, as defined in \cite{ELS05}, is
$$\vol(v) \,:=\,\lim_{m\to\infty}
\frac{\dim_{\C} (\O_S/\mathcal{I}_m)}{m^2/2}$$
(note that $\O_S/\mathcal{I}_m$ is an artinian
$\C$-algebra supported
at the center of the valuation)
and the volume of a divisor class $D$ on a surface $S$
of dimension $d$ is defined as
\[
\vol (D) := \limsup_{k\to\infty} \frac{h^0(S, kD)}{k^2/2} .
\]
Boucksom-K\"uronya-MacLean-Szemberg show that the invariant $\hat\mu$
can be bounded in terms of values in arbitrary dimension; let us recall
their result in the case of surfaces:
\begin{Pro}[{\cite[Proposition 2.9]{BKMS}}]\label{volBKMS}
Let $D$ be a big divisor and $v$ a real valuation centered at a point
$p\in S$. Then
\begin{equation*}
 \hat \mu_D (v)\ge \sqrt{\vol(D)/\vol(v)}\ .
\end{equation*}
\end{Pro}
When $D$ is ample this is equivalent to the bound
\(
 \hat\mu_D(v)\ge\sqrt{D^2/\vol(v)} \ .
\)
Valuations which satisfy the equality in Proposition \ref{volBKMS}, with
$D\subset S=\P^2$ a line, will be called
\emph{minimal}.

For the sake of simplicity we recall the notion of quasimonomial valuations
specializing to the case when  $S=\P^2$ and the center of $v$
is the origin $(0,0)\in\A^2=\Spec \C[x,y] \subset \P^2=\Proj \C[X,Y,Z]$,
with $x=X/Z, y=Y/Z$.
In this situation we write
\[
\mu_d (v) = \max\{v(f)|f\in \C[x,y], \deg f \le d\}\ , \quad
\text{and} \quad
\hat \mu(v) = \lim_{d\to \infty}\frac{\mu_d(v)}{d}\ .
\]

\begin{Def}\label{seriesdef}
 Given a series $\xi(x)\in \C[[x]]$ with $\xi(0)=0$ and a real number $t\ge 1$,
let
\[
\vv[f]{\xi}{t} := \ord_x(f(x,\xi(x)+\theta x^t))\ ,
\]
where the symbol $\theta$ is transcendental over $\C$.

Equivalently, expand $f$ as a Laurent series
\[
f(x,y) = \sum a_{ij}x^i (y-\xi(x))^j \ ,
\]
and put
\begin{equation}\label{monomialdef}
 \vv[f]{\xi}{t} := \min\{i+tj|a_{ij\ne 0}\}\ .
\end{equation}
Then $f\mapsto \vv[f]{\xi}{t}$ is a valuation which we denote $\vv{\xi}{t}$.
Such valuations are called \emph{monomial} if
$\xi=0$, and \emph{quasimonomial} in general.
Slightly abusing language, $t$ will be called the
\emph{characteristic exponent} of $\vv{\xi}{t}$ (even if it is an integer).
For simplicity we also write
\[
\mu_d(\xi,t) = \mu_d(\vv{\xi}{t})\ , \quad
\text{and} \quad
\hat\mu(\xi,t)=\hat \mu (\vv{\xi}{t})\ .
\]
\end{Def}

\begin{Rem}
The valuation $\vv{\xi}{t}$ depends only on the
$\lfloor t \rfloor$-th jet of $\xi$, so for fixed $t$ this series can be safely assumed
to be a polynomial; however, later on we'll let $t$ vary for a fixed $\xi$.
\end{Rem}

It is not difficult to see directly using \eqref{monomialdef}, and will
be proved using geometric considerations in the next subsection 
that $\vol(\vv{\xi}{t})=t^{-1}$.
The precise statement of Conjecture \ref{mainconjintro}
is now:

\begin{Con}\label{mainconj}
 For a sufficiently general choice of $\xi$, and every $t\ge 8+1/36$,
 the valuation $\vv{\xi}{t}$ is minimal.
\end{Con}

\subsection*{Cluster of centers of a valuation}

Next we introduce the geometric structures attached to valuations
$\vv{\xi}{t}$ which allow us to study $\hat \mu (\xi,t)$ and justify
the conjecture.

Each valuation with 0-dimensional center naturally determines a
\emph{cluster of centers}, as follows.
To begin with, let $p_1=\cent(v)$ in the projective surface $S$.
Consider the blowup $\pi_1:S_1\rightarrow S$
centered at $p_1$ and let $E_1$ be the corresponding exceptional divisor.
The center of $v$ on $S_1$ may be  $E_1$ or a point $p_2\in E_1$.

Iteratively blowing up the centers $p_1, p_2, \dots$ of $v$ either ends
with a model where the center of $v$ is an exceptional divisor $E_n$,
in which case
\[
v(f) \,=\, c\cdot \ord_{E_n}f
\]
for some constant $c$, and  $v$ is called a divisorial valuation,
or this process goes on indefinitely.
For each center $p_i$ of $v$, general curves through $p_i$
and smooth at $p_i$
have the same value $v_i=v(E_i)$.

Following \cite[Chapter 4]{Cas00}, we call the sequence
$K=(p_1, p_2,\dots)$, with weights $v_i=v(E_i)$,
a weighted 
cluster of points, which completely determines $v$.
Indeed, for every effective divisor $D \subset S$,
\begin{equation}
\label{eq:proximity}
 v(D)=\sum_i v_i\cdot \mult_{p_i} \widetilde D_i,
\end{equation}
 where $\widetilde D_i$ denotes
proper transform at $S_i$.  The sum may be infinite, but for  valuations
with real rank 1, which are the ones we consider here,
$\widetilde D$ can have positive multiplicity at only a finite number of centers
\cite[8.2]{Cas00}.

Sometimes we shall say that a divisor goes through
an infinitely near point to mean that its proper transform
on the appropriate surface goes through it.

\begin{Def}
With notation as above, given indices $j<i$, the center $p_i$ is  called \emph{proximate to $p_j$}
($p_i\succ p_j$) if $p_i$ belongs to the proper transform $\widetilde E_{j}$ of the exceptional
divisor of $p_j$. Each $p_i$ with $i>0$ is proximate to $p_{i-1}$ and to
at most one other center $p_j$, $j<i-1$; in this case
$p_i=\widetilde E_{{j}}\cap E_{i-1}$ 
and $p_i$ is called a   \emph{satellite point}. A point which is not a satellite point is called \emph{free}.
\end{Def}

\begin{Rem}
\label{proximate-irrdiv}
  The irreducible components of exceptional divisors
can be computed as proper transforms if the proximity
relations are known: $\tilde E_j=E_j-\sum_{p_i\succ p_j}E_i$
\end{Rem}

\begin{Rem}\label{rm:proximity}
For every valuation $v$, and every center $p_i$ such that $v$ is not the
divisorial valuation associated to $p_i$, equation~\eqref{eq:proximity}
applied to $D=E_{j}$ gives rise to the so-called  \emph{proximity equality}
\[
v_j \,=\, \sum_{p_i \succ p_j} v_i \ .
\]

For effective divisors $D$ on $S$, the intersection number
$\widetilde D \cdot \widetilde E_{j}\ge0$
together with remark \ref{proximate-irrdiv}
yield the proximity inequality
\[
\mult_{p_j}(\widetilde D_j)\ge \sum_{p_i\succ p_j} \mult_{p_i} (\widetilde D_i)\ .
\]
\end{Rem}
\vskip\baselineskip

Assume now that  $v=\ord_{E_s}$ is the divisorial valuation with cluster of centers $K=(p_1,\dots,p_s)$,
while $\pi_K:S_K\rightarrow S$ denotes the composition of the blowups of all points of $K$.
Then,  for every $m>0$,  the valuation ideal sheaf $\I_m$ can be described as
\[
\mathcal I_m = (\pi_K)_* (\O_{S_K}(-mE_s))\ .
\]

\begin{Rem}\label{UnloadRem}
As soon as $s>1$, the negative intersection number
$-mE_s\cdot \widetilde E_{s-1}=-m$ implies that all global sections of
$\O_{S_K}(-mE_s)$ vanish along $\widetilde E_{s-1}$,  and therefore
\[
\mathcal I_m = (\pi_K)_* (\O_{S_K}(-mE_s-\widetilde E_{s-1})) =
(\pi_K)_* (\O_{S_K}(-E_{s-1}-(m-1)E_s))\ .
\]
This \emph{unloads} a unit of multiplicity from $p_s$ to $p_{s-1}$.
The finite process of subtracting
all exceptional components that are met negatively, (i.e.,
starting from a divisor $D_0=-m_1E_1-\cdots-m_sE_s$
and successively replacing $D_i$ by $D_i-\widetilde E_j$, starting with $i=0$,
whenever $D_i\cdot \widetilde E_j<0$ for some $j$,
until one obtains a $D_i$ such that
$D_i\cdot \widetilde E_j\geq 0$ for all $j$) is
classically called \emph{unloading the weights of the cluster}.
The final uniquely determined system of weights $\bar m_i$
satisfies
\[
D_{m} = -\sum \bar m_i E_i \ \ \text{ is nef relative to }\pi_K
\]
(recall that a divisor is nef relative to a morphism $f$
when it intersects nonnegatively every curve mapping to a point
\cite[1.7.11]{PAG})
and
\[
\mathcal I_m = (\pi_K)_* (\O_{S_K}(D_m))\ .
\]

In this case, general sections of $\mathcal I_m$
have multiplicity exactly $\bar m_i$ at $p_i$, and  no other
singularity. More precisely,  for any ample divisor
class $A$ on $S$, the complete system $|kA+D_m|$
 for $k\gg0$ is base-point-free (were we denote $A=(\pi_K)^*(A)$)
and its general elements are smooth and meet
each $E_j$ transversely at $\bar m_j- \sum_{p_i \succ p_j} \bar m_i$
distinct points.
Note that relative nefness of $D_m$ is equivalent to the proximity
inequality $\bar m_j\ge \sum_{p_i \succ p_j} \bar m_i$.

It follows using \eqref{eq:proximity} that the valuation of an effective
divisor $D$ on $S$ can be computed as a local intersection multiplicity
\[
v(D) = I_{p_1}(D,C)
\]
where $C$ is the image in $S$ of a general element of $|kA+D_m|$.
\end{Rem}
\vskip\baselineskip

The unloading procedure just described also yields the following.

\begin{Lem}\label{simple}
 Let $v=\ord_{E_s}$ be the divisorial valuation whose cluster of centers is
 $K=(p_1,\dots,p_s)$ with weights $v_i$, and for every $m>0$ denote
 $D_m=-\sum \bar m_i E_i$  the unique nef divisor relative to  $\pi_K$ with
 $\mathcal I_m=(\pi_K)_* (\O_{S_K}(D_m))$. If  $m=k\sum v_i^2$ for
 some integer $k$,  then $\bar m_i=kv_i$ for all $i$.
\end{Lem}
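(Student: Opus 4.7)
The plan is to verify that the anti-effective exceptional divisor $-\sum kv_iE_i$ enjoys both defining properties of $D_m$---namely the proximity (``nef'') inequality and the pushforward identity $(\pi_K)_*\mathcal O_{S_K}(-\sum kv_iE_i)=\mathcal I_m$---so that uniqueness of the unloaded representative forces $\bar m_i=kv_i$. The proximity inequality at $j<s$ is the proximity equality $v_j=\sum_{p_i\succ p_j}v_i$ multiplied by $k$, and at $j=s$ it reduces to $kv_s\ge 0$.

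One inclusion of the pushforward identity is immediate: if $\pi_K^*f-\sum kv_iE_i\ge 0$, taking $\ord_{\widetilde E_s}$ and using $\ord_{\widetilde E_s}(E_i)=v(E_i)=v_i$ gives $v(f)\ge k\sum v_i^2=m$. For the reverse, assume $v(f)\ge m$ and write $\pi_K^*f=\widetilde f+\sum\mu_iE_i$ with $\mu_i=\mult_{p_i}\widetilde f_i$. Using $E_i=\sum_j c_{ij}\widetilde E_j$ where $c_{ij}=\ord_{\widetilde E_j}(E_i)$, rewrite $-\sum kv_iE_i=-k\sum_jV_j\widetilde E_j$ with $V_j:=\sum_i c_{ij}v_i$; since $\widetilde f$ carries no exceptional component, effectiveness of $\pi_K^*f-\sum kv_iE_i$ reduces to $v_j^*(f):=\ord_{\widetilde E_j}(\pi_K^*f)\ge kV_j$ for every $j\le s$. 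The case $j=s$ is precisely the hypothesis, since $v_s^*=v$ and $V_s=\sum v_i^2$. The case $j<s$, after expanding $v_j^*(f)=\sum_{i\le j}c_{ij}\mu_i$, is equivalent to $V_s\,v_j^*(f)\ge V_j\,v(f)$; this in turn follows by exhibiting the coefficients $V_sc_{ij}-V_jv_i$, viewed as a linear functional on the $\mu_i$, as a nonnegative integer combination of the proximity functionals $\mu_l-\sum_{p_i\succ p_l}\mu_i$.

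The main obstacle is producing that nonnegative combination. It is a purely combinatorial claim about the cluster and can be checked inductively using the recursion $c_{ij}=\sum_{p_l\succ p_i}c_{lj}$ for $i<j$ together with the proximity equality for $v$; the required coefficients come out explicitly as nonnegative cluster quantities. With this in hand, the uniqueness asserted in Remark~\ref{UnloadRem} identifies $-\sum kv_iE_i$ with $D_m$, so $\bar m_i=kv_i$.
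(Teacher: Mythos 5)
The paper itself gives no written proof of Lemma~\ref{simple} (it is prefaced only by ``The unloading procedure just described also yields the following''), so I cannot compare your argument to a reference proof; I will evaluate it on its own terms. Your overall strategy is sound: show that $-\sum kv_iE_i$ is nef and that $(\pi_K)_*\O_{S_K}(-\sum kv_iE_i)=\mathcal I_m$, then invoke uniqueness. The nefness check via the proximity equalities is correct, and the easy inclusion of the pushforward identity is fine. However, the entire mathematical content of the lemma is concentrated in the reverse inclusion, which you reduce to the inequality $V_s\,v_j^*(f)\ge V_j\,v(f)$ for all $f$ and all $j<s$, and you prove nothing here: you simply assert that the functional $\sum_i(V_sc_{ij}-V_jv_i)\mu_i$ is a nonnegative combination of the proximity functionals $\mu_l-\sum_{p_i\succ p_l}\mu_i$ and that this ``can be checked inductively.'' That assertion is the hard part; it is \emph{not} a generic positivity statement (for arbitrary vectors $a,b,d\ge0$ one can easily have $(\sum a_i^2)(\sum b_id_i)<(\sum a_ib_i)(\sum a_id_i)$), so it genuinely depends on the combinatorics of the $c_{ij}$, and as written the proof has a hole precisely where the lemma's content lies.

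The gap is real but fillable, and here is a cleaner way to close it that you may prefer. Set $G=V_s\sum_i c_{ij}E_i-V_j\sum_i v_iE_i$ and write $G=\sum_l g_l\widetilde E_l$ with $g_l=V_s\big(\sum_i c_{ij}c_{il}\big)-V_jV_l$; your claim is exactly $g_l\ge0$. Since $\big(\sum_i c_{ij}E_i\big)\cdot\widetilde E_l=-\delta_{lj}$ and $\big(\sum_i v_iE_i\big)\cdot\widetilde E_l=-\delta_{ls}$ (both are the proximity equalities for $\ord_{\widetilde E_j}$, resp.\ $\ord_{E_s}$), one gets $g_s=0$ and $G\cdot\widetilde E_l\le0$ for all $l<s$ (it is $0$ for $l\ne j$ and $-V_s$ for $l=j$). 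As the intersection matrix $(\widetilde E_l\cdot\widetilde E_{l'})_{l,l'<s}$ is negative definite with nonnegative off-diagonal entries, the standard lemma (if $Mx\le0$ then $x\ge0$) yields $g_l\ge0$ for all $l<s$. Alternatively, one can avoid verifying the pushforward identity altogether and argue directly that the unloading chain from $-mE_s$ terminates at $-k\sum v_iE_i$: this divisor is nef, $(-mE_s)-(-k\sum v_iE_i)=\sum_{l<s}kV_l\widetilde E_l$ is effective and supported on the exceptional locus, and $-k\sum v_iE_i$ meets every $\widetilde E_l$ with $l<s$ trivially; negative definiteness of $\{\widetilde E_l\}_{l<s}$ then forces the unloading to stop exactly there, which is likely what the paper means by ``yields'' after the unloading discussion.
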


\begin{proof}
It is clear that  $-\sum \bar m_i E_i$ is nef relative to $\pi_K$ because
of the proximity equalities from remark \ref{rm:proximity}. Moreover, because
every effective divisor $D$ satisfies the proximity inequalities,
if $\mult_{p_1}D<\bar m_1$ then $\mult_{p_i}D<\bar m_i$ for all $i$, and by
equation \eqref{eq:proximity}, $v(D)<m$. Arguing by induction on $s$,
one sees that  $\mathcal I_m=(\pi_K)_* (-\sum \bar m_i E_i)$.
\end{proof}

\begin{Rem} Write $m_0$ for $\sum v_i^2$. Then,
in the context of Zariski's theory of factorizations of complete ideals,
lemma \ref{simple} translates into
\[
\mathcal I_{km_0}=\mathcal I_{m_0}^k\ ,
\]
and to the fact that $\mathcal I_{m_0}$  is a simple complete ideal.
For other values of $m$  one has instead
\[
\mathcal I_{km_0+\delta}=\mathcal I_{m_0}^k\mathcal I_\delta\, .
\]

Non-divisorial valuations can be considered to be limits of divisorial valuations
and their valuation ideals turn out to be  complete as well, determined by finitely
many centers. The ideal $\mathcal I_{km}$ is then never a power of $\mathcal I_m$,
rather  there exists  $\delta>0$ such that
\[
\mathcal I_m^k\,\subset\, \mathcal I_{km}\,\subset\, \mathcal I_{m-\delta}^k
\]
for all $m$ and $k$. Such bounds actually hold in greater generality, namely for Abhyankar valuations in arbitrary dimension; see \cite{ELS05}
by L.~Ein, R.~Lazarsfeld and K.~Smith.
\end{Rem}
\vskip\baselineskip

\begin{Lem}\label{volume_from_cluster}
  Let $v=\ord_{E_n}$ be the divisorial valuation with  cluster of centers
 $K=(p_1,\dots,p_n)$ and  weights $v_i$. Then
\[
\vol (v)\,=\,\Big(\sum v_i^2\Big)^{-1}\ .
\]
\end{Lem}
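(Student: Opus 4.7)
The plan is to compute $\vol(v)$ along the arithmetic subsequence $m = km_0$, where $m_0 := \sum v_i^2$, and then to extend to all $m$ by monotonicity.

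For the subsequence, I would first compute the colength $\dim_\C(\O_S/\I_m)$ via Riemann--Roch on $S_K$. The short exact sequence
$$
0 \longrightarrow \O_{S_K}(D_m) \longrightarrow \O_{S_K} \longrightarrow \O_{-D_m} \longrightarrow 0
$$
pushes forward under $\pi_K$ to give $\O_S/\I_m \cong (\pi_K)_*\O_{-D_m}$ once one knows $R^1(\pi_K)_*\O_{S_K}(D_m) = 0$; this is the classical vanishing for the nef representative of a complete $\mathfrak m$-primary ideal, and lets us identify the colength with $\chi(\O_{-D_m})$. Using the total-transform intersection theory $E_i\cdot E_j = -\delta_{ij}$ and $K_{S_K}\cdot E_i = -1$, Riemann--Roch applied to the effective divisor $-D_m = \sum\bar m_i E_i$ gives
$$
\dim_\C(\O_S/\I_m) \,=\, \chi(\O_{-D_m}) \,=\, -\tfrac{1}{2}\bigl(D_m^2 - D_m\cdot K_{S_K}\bigr) \,=\, \tfrac{1}{2}\sum\bar m_i(\bar m_i+1).
$$

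Invoking Lemma \ref{simple}, for $m = km_0$ we have $\bar m_i = k v_i$, so
$$
\dim_\C(\O_S/\I_{km_0}) \,=\, \tfrac{k^2}{2}\sum v_i^2 + \tfrac{k}{2}\sum v_i \,=\, \tfrac{k^2 m_0}{2} + O(k),
$$
whence $2\dim_\C(\O_S/\I_{km_0})/(km_0)^2 \to 1/m_0$ as $k\to\infty$.

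Finally, to extend to arbitrary $m\to\infty$, I would exploit the evident monotonicity $\I_{m'}\subset\I_m$ for $m\le m'$. For any $m$ with $km_0\le m<(k+1)m_0$,
$$
\dim_\C(\O_S/\I_{km_0}) \,\le\, \dim_\C(\O_S/\I_m) \,\le\, \dim_\C(\O_S/\I_{(k+1)m_0}),
$$
and dividing by $m^2/2$ sandwiches $2\dim_\C(\O_S/\I_m)/m^2$ between $2\dim_\C(\O_S/\I_{km_0})/((k+1)m_0)^2$ and $2\dim_\C(\O_S/\I_{(k+1)m_0})/(km_0)^2$, both of which are asymptotic to $1/m_0$ by the previous step; squeezing yields $\vol(v) = 1/m_0$.

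The main obstacle is the identification $\dim_\C(\O_S/\I_m) = \chi(\O_{-D_m})$, i.e.\ the Hoskin--Deligne colength formula for complete $\mathfrak m$-primary ideals. A cleaner alternative is to observe $\I_{km_0} = \I_{m_0}^k$ (from the Remark preceding) and invoke the classical identity $e(\I_{m_0}) = -D_{m_0}^2 = \sum v_i^2$ for the Hilbert--Samuel multiplicity, which immediately supplies the leading coefficient of the asymptotic expansion of $\dim_\C(\O_S/\I_{km_0})$ in $k$ and bypasses the vanishing question altogether.
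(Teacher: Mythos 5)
Your proof is correct and follows the same strategy as the paper: compute $\dim_\C(\O_S/\I_{km_0})$ for $m_0=\sum v_i^2$ using $\bar m_i = kv_i$ from Lemma~\ref{simple}, identify the leading coefficient, and pass to the limit. The paper is terser---it simply cites \cite[4.7]{Cas00} for the colength formula $\dim_\C(\O_S/\I_m)=\tfrac{1}{2}\sum\bar m_i(\bar m_i+1)$ and leaves the extension from the subsequence $m=km_0$ to arbitrary $m$ implicit---whereas you rederive that formula via Riemann--Roch together with $R^1(\pi_K)_*$-vanishing (which is precisely what the cited result of Casas-Alvero packages) and make the monotonicity squeeze explicit; your alternative remark via $\I_{km_0}=\I_{m_0}^k$ and the Hilbert--Samuel multiplicity $e(\I_{m_0})=-D_{m_0}^2$ is an equally valid shortcut.
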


\begin{proof}
 For $m=k\sum v_i^2$,
 $\dim_\C (\O_X/\mathcal{I}_m)=\sum kv_i(kv_i+1)/2$
by \cite[4.7]{Cas00}.
\end{proof}

\begin{Rem}
 It is proven by Cutkosky and Srinivas in \cite[Corollary 1]{CutSri} that divisorial valuations on surfaces have rational volume under mild
conditions. On the other hand \cite[Theorem 1.1]{Kur03} shows that this is not the case in higher dimensions.
\end{Rem}

\vskip\baselineskip
Consider the group of numerical equivalence classes of $\R$-divisors
$N_1(S_K)$, where  $S_K$ is the
blowup at the cluster of centers of $v$.
One calls a  rational ray in $N_1(S_K)$  \emph{effective}, if it is generated by an effective  class.
The \emph{Mori cone} $\Mor(S_K)$ is the closure in $N_1(S_K)$  of the set $\NE(S_K)$ of all effective rays,
and it is the dual of the \emph{nef cone} $\Nef(S_K)$ which is the closed cone described by all nef rays.

A \emph{$(-1)$-ray} in $N_1(S_K)$ is a ray generated by a $(-1)$-\emph{curve},
i.e., a smooth, irreducible, rational curve $C$ with $C^2=-1$
(hence $C\cdot \kappa=-1$, where $\kappa$ denotes the canonical class).
Mori's Cone Theorem says that
\[
\Mor(S_K) = \Mor(S_K)^\succcurlyeq + R_n \ ,
\]
where $\Mor(S_K)^\succcurlyeq$  denotes the
subset of $\Mor(S_K)$ described by rays generated by nonzero classes
$\eta$ such that $\eta\cdot \kappa\ge 0$ with  $\kappa$ being the canonical class,
and
\[
R_n \;= \sum_{\rho\;\text {a}\; (-1)-\text{ray}} \rho\; \subseteq\;  \Mor(S_K)^\preccurlyeq\ .
\]

\begin{Rem}
 In cases when $\Mor(S_K)$ is a polyhedral cone,
 Proposition~\ref{volBKMS} yields that $\hat \mu_D (v)$ is
 a rational number, and therefore $v$ can be
 minimal only if $\sqrt{D^2/\vol(v)}$ is rational.
 In fact, all examples of divisorial minimal valuations included here
 correspond to rational values of $\sqrt{D^2/\vol(v)}$,
 even for nonpolyhedral $\Mor(S_K)$.
For some examples of non-divisorial minimal valuations,
see Remark \ref{QuadField}; for these,
$\vol(v)$ defines a quadratic extension of $\mathbb Q$
in which it is a square (i.e., $\sqrt{\vol(v)}\in \mathbb{Q}(\vol(v))$).
\end{Rem}

\subsection*{Centers of a quasimonomial valuation}

 Quasimonomial valuations are exactly the valuations  whose cluster of centers consists
of a few free points followed by satellites, which may be finite or infinite
in number, but not infinitely many proximate to the same center.
We will work with very general quasimonomial valuations on $\P^2$.
The genericity condition refers to the position of the free centers;
it will be made precise below, after describing the  continuity and semicontinuity
properties of $\hat\mu$ on the space of quasimonomial valuations.

\begin{Rem}\label{ContFracEx}\label{ClusterDivRem} \cite{Cas00}
The cluster $K$ of centers of $\vv{\xi}{t}$ can be easily described from
the continued fraction expansion
\[
t=n_1+\frac{1}{n_2+\frac{1}{n_3+\frac{1}{\ddots}}}\ .
\]
 $K$ consists of $s=\sum n_i$ centers; if $t=n_1$ then
they all lie on the proper transform of the germ
\[
\Gamma \colon  y = \xi(x)\  ,
\]
otherwise the first $n_1+1$ lie
on $\Gamma$ and the rest are satellites: starting from $p_{{n_1}+1}$ there
are $n_2+1$ points proximate to $p_{n_1}$, the last of which starts
a sequence of $n_3+1$ points proximate
to $p_{n_1+n_2}$ and so on. If the continued fraction is finite, with
$r$ terms, then the last $n_r$ points (not $n_r+1$) are proximate to
$p_{n_1+\dots+n_{r-1}}$.
The weights are $v_i=1$ for $i=1,\dots,n_1$,
then $v_i=t-n_1$ for $i=n_1+1, \dots, n_1+n_2$, and
$v_i=v_{n_1+\dots+n_{j-1}}-n_j v_{n_1+\dots+n_{j}}$
for $i= n_1+\dots+n_{j}+1, \dots, n_1+\dots+n_{j+1}$.

If $t$ is rational, there are
only finitely many coefficients $n_1,\ldots,n_r$,
so $K=(p_1, p_2,\dots,p_s)$ is finite and the valuation is
divisorial. More precisely,
\[
\vv[f]{\xi}{t} = v_s\cdot \ord_{E_{s}}(f)\ .
\]
The prime divisor components
$\widetilde E_i$ of $E_1$ on $S_K$ can then be described
as follows (where
$\widetilde E_i$, as in Remark \ref{proximate-irrdiv}, is the
proper transform in $S_K$
of the blowup of the point $p_i$).
Note that $s=n_1+\ldots+n_r$;
let $s_i$ be the sum $n_1+\cdots+n_i$, so $s=s_r$.
The only $i$ with $(\widetilde E_i)^2=-1$ is $i=s$,
and in this case $\widetilde E_s=E_s$.
For each $1\leq i<r-1$, we have:
$\widetilde E_{s_i}=E_{s_i}-E_{s_i+1}-\cdots-E_{s_{i+1}+1}$, so
$(\widetilde E_{s_i})^2=-2-n_{i+1}$; for $i=r-1$ we have
$\widetilde E_{s_{r-1}}=E_{s_{r-1}}-E_{s_{r-1}+1}-\cdots-E_{s}$, so
$(\widetilde E_{s_{r-1}})^2=-1-n_r$;
and for every $1\leq j\leq s$ not in the set $\{s_1,\ldots,s_r\}$
we have $\widetilde E_j=E_j-E_{j+1}$, so $(\widetilde E_j)^2=-2$.

If $t$ is irrational, then the sequence of centers is infinite
and the group of values has rational rank 2. There is no surface $S_K$,
but denoting $S_j$ the blowup of the first $j$ points of $K$,
the above description of the divisors $\widetilde E_i$ holds
whenever it makes sense; for instance,
$\widetilde E_{s_i}=E_{s_i}-E_{s_i+1}-\cdots-E_{s_{i+1}+1}$
in every $S_j$ with $j \ge {s_{i+1}+1}$.

\end{Rem}
\vskip\baselineskip

\begin{Cor}
\label{volquasimonomial}
Let $\vv{\xi}{t}$ be a quasimonomial valuation as above. Then
\[
 \vol(\vv{\xi}{t})=t^{-1}\ ,\ \mu_d(\xi,t)\ge d\sqrt t\ ,
\]
and
\[
\hat \mu(\xi,t) \ge \sqrt t \ ,
\]
so 
$\vv{\xi}{t}$ is minimal whenever $\hat \mu(\xi,t)= \sqrt t$.
\end{Cor}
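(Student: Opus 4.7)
The plan is to first establish $\vol(v(\xi,t)) = t^{-1}$; the remaining assertions then follow quickly from earlier results in the excerpt.

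For the volume, the formal substitution $y \mapsto y - \xi(x)$ is an automorphism of the completed local ring $\widehat{\mathcal{O}}_{\P^2,O} \cong \C[[x,y]]$ (since $\xi(0)=0$), under which $v(\xi,t)$ transforms into the monomial valuation $v(0,t)$; the valuation ideals correspond bijectively, preserving colengths. For $v(0,t)$, the ideal $\mathcal{I}_m$ is generated by $\{x^i y^j : i+tj \ge m\}$, so $\dim_\C(\mathcal{O}_O/\mathcal{I}_m)$ equals the number of $(i,j) \in \N^2$ with $i+tj < m$; this is $m^2/(2t) + O(m)$ by elementary lattice counting, yielding $\vol = 1/t$. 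As an alternative for rational $t$, one may apply the preceding Lemma directly: using the block structure of weights from Remark \ref{ContFracEx} (common weight $w_j$ on the $j$-th block, recursion $w_{j+1} = w_{j-1} - n_j w_j$, and setting $w_0 := t$), summation by parts gives
\[
\sum_i v_i^2 \;=\; \sum_j n_j w_j^2 \;=\; \sum_j w_j\bigl(w_{j-1} - w_{j+1}\bigr) \;=\; w_0 w_1 - w_r w_{r+1} \;=\; t,
\]
since $w_{r+1}=0$ terminates the continued fraction.

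Given $\vol = 1/t$, the bound $\hat\mu(\xi,t) \ge \sqrt{t}$ is immediate from Remark \ref{mu-nef} applied to $D$ a line in $\P^2$ (so $D^2 = 1$ gives $\hat\mu \ge \sqrt{D^2/\vol(v)} = \sqrt{t}$), and minimality in case of equality $\hat\mu = \sqrt t$ is then just the definition in the same remark. For the intermediate inequality $\mu_d(\xi,t) \ge d\sqrt{t}$, I would use a Riemann--Roch style dimension count: the evaluation map $\C[x,y]_{\le d} \to \mathcal{O}_O/\mathcal{I}_m$ has kernel of dimension at least $\binom{d+2}{2} - \dim_\C(\mathcal{O}_O/\mathcal{I}_m)$, which by the leading estimate $\dim_\C(\mathcal{O}_O/\mathcal{I}_m) = m^2/(2t) + O(m)$ is strictly positive for $m$ slightly below $d\sqrt{t}$, producing a polynomial of degree $\le d$ with valuation $\ge m$.

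The main obstacle is the volume computation; the change-of-variables route handles both rational and irrational $t$ uniformly, while the telescoping identity via the preceding Lemma provides a self-contained check in the rational case. A subsidiary subtlety is sharpening the dimension count to obtain $\mu_d \ge d\sqrt{t}$ exactly for every $d$, which may require invoking the precise colength formula $\dim_\C(\mathcal{O}/\mathcal{I}_{km_0}) = \sum kv_i(kv_i+1)/2$ used in the proof of the preceding Lemma.
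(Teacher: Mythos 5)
Your treatment of the two substantive claims is sound and, as far as one can tell (the paper offers no explicit proof of this corollary), matches the intended argument. For the volume, the telescoping computation $\sum_j n_j w_j^2 = w_0 w_1 - w_r w_{r+1} = t$ is precisely what Lemma 2.5 (the formula $\vol(v) = \bigl(\sum v_i^2\bigr)^{-1}$) combined with the weight recursion in Remark \ref{ContFracEx} delivers in the divisorial case, and your change-of-variables argument reducing $\vv{\xi}{t}$ to the monomial valuation $\vv{0}{t}$ in $\hat{\mathcal O}_{\P^2,O}$ is a clean way to handle irrational $t$ uniformly, since Lemma 2.5 as stated applies only to divisorial valuations (the paper is implicitly relying on the continuity discussion for that case). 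Deducing $\hat\mu(\xi,t)\ge\sqrt{t}$ by taking $D$ a line in Remark \ref{mu-nef}, and reading off minimality from the definition in that remark, is likewise exactly what is intended.

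The weak spot is the pointwise claim $\mu_d(\xi,t)\ge d\sqrt{t}$. You rightly sense a difficulty and call it a ``subsidiary subtlety,'' but the problem is more than a subtlety: the dimension count produces a nonzero polynomial of degree $\le d$ with $v(f)\ge m$ only when $\binom{d+2}{2}$ strictly exceeds $\dim_\C(\mathcal O_O/\mathcal I_m)$, and plugging in the exact colength formula $\sum kv_i(kv_i+1)/2$ at $m=kt$ shows the resulting lower bound falls short of $d\sqrt{t}$ by an error of order $1$, which cannot be absorbed for small $d$. In fact the displayed inequality is simply false for small $d$ and very general $\xi$: for $t>4$ one has $\mu_1(\xi,t)=2<\sqrt{t}$, since for very general $\xi$ the only line with $v>1$ is the tangent line, which evaluates to $\min(2,t)=2$. (Explicitly, $t=9$, $d=1$ gives $\mu_1=2<3$.) What the dimension count (or, more cleanly, Remark \ref{mu-nef}) actually yields is the asymptotic bound $\hat\mu\ge\sqrt{t}$; the sharp inequality $\mu_d\ge d\sqrt{t}$ should be read as shorthand for that limit statement and should not be pursued pointwise.
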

\begin{proof}
The only point that needs proving is the value of $\vol(\vv{\xi}{t})$,
which follows from Lemma \ref{volume_from_cluster}, taking into
account the values $v_i$ computed above and using induction on
the number of terms in the continued fraction of $t$.
\end{proof}
\subsection*{The space of valuations}

 \begin{Rem}\label{tree}
In definition \ref{seriesdef} one may allow formal series
$\xi(x)=\sum_{j\ge 1} a_j x^{\beta_j}$
whose exponents $\beta_j$ form
an arbitrary increasing sequence of rational numbers,
and one still obtains valuations $\vv{\xi}{t}$
(no longer quasimonomial).
It is even possible to allow $t=\infty$, except when
$\xi$ is the (convergent) Puiseux series of a branch of curve
going through the center $p_1$. In this way, \emph{all real valuations}
with center at $p_1$ are obtained (up to a normalizing constant factor,
see \cite[8.2]{Cas00} or \cite[Chapter 4]{FJ04}).


 The most natural topology in the set $\T$ of all real valuations
with center at $p_1$ is the coarsest such that for all $f\in F$,
$v\mapsto v(f)$ is a continuous map $\T \rightarrow \R$.
It is called the \emph{weak topology}. For a fixed
$\xi$, the map $t\mapsto \vv{\xi}{t}$ is then continuous.
There is in $\T$ a finer topology of interest: namely, the finest
topology such that $t\mapsto \vv{\xi}{t}$ is continuous for all $\xi$.
It is called the \emph{strong topology}. With the strong topology,
$\T$ is a profinite $\R$-tree, rooted at the $p_1$-adic valuation
(see \cite{FJ04} for precise definitions and proofs).
To avoid confusion with branches of curve, we call \emph{arcs}
the branches in $\T$.
Maximal arcs
are homeomorphic to the interval $[1,\infty]$ (respectively $[1,\infty)$)
and parameterized by $t\mapsto \vv{\xi}{t}$ where $\xi$
is not (respectively, is) the Puiseux series of a branch of curve
at $p_1$.

The arcs     of $\T$ share the obvious segments given by
coincident jets, and separate at rational values of $t$;
these correspond to divisorial valuations (also in this general case).
 \end{Rem}
\vskip\baselineskip

\begin{Pro}\label{semicont}
 Fix a real number $t>1$ and a natural number $d$. Set $k=\lceil t \rceil$ and denote
 by ${\mathcal J}_{k}\subset \C[[x]]$ the space of $(k-1)$-jets
 of power series with $\xi(0)=0$, endowed with the Zariski topology
 coming from the coefficients map ${\mathcal J}_{k}\cong \A^{k-1}$,
 $\sum a_i x^i\mapsto (a_1,\dots,a_{k-1})$.

 Then  the function
 $\xi\mapsto \mm[d]{\xi}{t}$ descends to
 an upper semicontinuous function
 \[
 {\mathcal J}_{k}\rightarrow \langle 1,t\rangle_\Q \subset \R
 \]
 which takes on only finitely many values.
\end{Pro}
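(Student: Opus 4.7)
The plan is to realize $\mm[d]{\xi}{t}$ as the fiberwise maximum of an algebraic upper semicontinuous function on ${\mathcal J}_k\times\P(V_d)$, and then to deduce the upper semicontinuity of $\mm[d]{\xi}{t}$ itself from the closedness of projection via the properness of $\P(V_d)$. Here $V_d\subset \C[x,y]$ denotes the space of polynomials of degree at most $d$.

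First I would handle the descent to ${\mathcal J}_k$, which is immediate from the remark following Definition \ref{seriesdef}: the valuation $\vv{\xi}{t}$ depends on $\xi$ only modulo $x^k$, so one may take $\xi\in{\mathcal J}_k$ to be a polynomial of degree at most $k-1$.

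The core step would be to consider the incidence locus
$$\Sigma_c := \{(\xi,[f])\in {\mathcal J}_k\times \P(V_d) \;:\; \vv[f]{\xi}{t}\ge c\}.$$
Expanding $f=\sum c_{pq} x^p y^q$ via $y^q = ((y-\xi)+\xi)^q$ rewrites $f$ as $\sum a_{ij}(\xi;f)\, x^i (y-\xi)^j$, where each $a_{ij}$ is linear in the $c_{pq}$ and polynomial in the coefficients of $\xi$ (here it is essential that $\xi$ is a polynomial, so that $a_{ij}$ is an honest polynomial rather than a formal power series in the data). The condition $\vv[f]{\xi}{t}\ge c$ translates to the vanishing of those $a_{ij}$ with $i+tj < c$, of which there are only finitely many since $0\le j\le d$ and $i<c$. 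Hence $\Sigma_c$ is Zariski-closed in ${\mathcal J}_k\times \P(V_d)$. Properness of $\P(V_d)$ then makes the projection $\pi:{\mathcal J}_k\times \P(V_d)\to{\mathcal J}_k$ a closed map, so $\pi(\Sigma_c)=\{\xi : \mm[d]{\xi}{t}\ge c\}$ is Zariski-closed, proving upper semicontinuity.

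The remaining assertions are then short. Each value $\vv[f]{\xi}{t}=i+tj$ with $i,j\in \Z_{\ge 0}$ lies in $\Z+t\Z\subset \langle 1,t\rangle_\Q$. For the finiteness, if $c_1>c_2>\cdots$ were infinitely many distinct values of $\mm[d]{\xi}{t}$, the corresponding sublevel sets would form an infinite strictly ascending chain of Zariski-closed subsets of ${\mathcal J}_k\cong \A^{k-1}$, violating noetherianity. The main bookkeeping obstacle is to verify carefully that the coefficients $a_{ij}$ with $i+tj<c$ depend only on the $(k-1)$-jet of $\xi$, i.e., that higher-order data of $\xi$ truncated when passing to ${\mathcal J}_k$ does not affect the defining equations of $\Sigma_c$; this is precisely the content of the descent step, and once $\xi$ is realized as a polynomial of degree $\le k-1$, the needed algebraicity of the $a_{ij}$ is automatic.
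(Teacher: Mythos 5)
Your core argument is essentially the same as the paper's: realize the superlevel set $\{\xi:\mm[d]{\xi}{t}\ge c\}$ as the image, under the proper projection $\mathcal J_k\times\P(V_d)\to\mathcal J_k$, of a Zariski-closed incidence locus cut out by algebraic conditions on the expansion coefficients of $f$. The paper encodes those conditions via the substitution $f\mapsto f(x,\xi(x)+\theta x^t)$ truncated to a finite-dimensional space $V$, while you work directly with the expansion $f=\sum a_{ij}x^i(y-\xi)^j$ from the definition \eqref{monomialdef}; these are equivalent formulations, and either way the algebraicity of the coefficients in $(\xi,f)$ follows from $\xi$ being taken as a polynomial of degree $\le k-1$. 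The descent step and the use of properness are both handled correctly.

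However, your finiteness argument is not correct as stated. You posit a strictly decreasing sequence $c_1>c_2>\cdots$ of values, obtain a strictly \emph{ascending} chain of closed subsets $Z_{c_1}\subsetneq Z_{c_2}\subsetneq\cdots$, and claim this violates noetherianity. It does not: noetherianity of $\A^{k-1}$ is the \emph{descending} chain condition on Zariski-closed subsets (equivalently the ascending chain condition on open sets, or on ideals); strictly ascending chains of closed sets exist in abundance (e.g.\ growing finite point sets). Moreover, since the values lie in a set discrete and bounded below, there is no infinite strictly decreasing sequence of values to begin with, so the argument does not get started. The correct version is either to produce an increasing sequence $c_1<c_2<\cdots$ of values and obtain a strictly \emph{descending} chain $Z_{c_1}\supsetneq Z_{c_2}\supsetneq\cdots$, contradicting DCC; or, more directly and in the spirit of the paper, to observe that the values are also bounded \emph{above} uniformly in $\xi$. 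Indeed, in your own expansion every nonzero $a_{ij}$ has $0\le j\le d$ and $0\le i\le (k-1)d$, so any nonzero $f$ of degree $\le d$ satisfies $\vv[f]{\xi}{t}\le (k-1)d+td$, and the values therefore lie in the finite set $\{i+tj: 0\le j\le d,\ 0\le i\le (k-1)d\}$. This makes the noetherian machinery unnecessary and supplies the explicit bound $m_{t,d}$ that the paper uses.
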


It follows that for fixed $t$, $\mm{\xi}{t}$ takes its smallest
value for $\xi$ with very general jet $\xi_{n-1}$
(i.e., in a countable intersection of Zariski-open subsets of
${\mathcal J}_{k}\cong \A^{k-1}$).

\begin{proof}
Because only the $k$  free centers of $\vv{\xi}{t}$ depend on $\xi$ ($k=n_1$
in the continued fraction expansion if $t$ is an integer
and $k=n_1+1$ otherwise), it is clear that the valuation only depends on the
$(k-1)$-th jet of $\xi$, and the existence of the function
\[
 {\mathcal J}_{k}\rightarrow \langle 1,t\rangle_\Q \subset \R
\]
is clear. We will prove that it only takes on a finite number of values
 and that for fixed $m$, the preimage of $[m,\infty)$ is Zariski-closed.

Given fixed $t$ and $d$, there exists  $m_{t,d}\in \langle 1,t\rangle_\Q$
such that $f\in \C[x,y]$, $\vv[f]{\xi}{t}\ge m_{t,d}$ implies $f \in (x,y)^{d+1}$ independently on $\xi$
(by unloading, or using the definition \eqref{monomialdef}).
Thus
\[
\mm[d]{\xi}{t} < m_{t,d}
\]
for all $\xi$.

Similarly, there exists $i_{t,d}$ such that no $f\in \C[x,y]_d$
has a proper transform going through any center $p_i$ of $\vv{\xi}{t}$ with $i>i_{t,d}$.
Therefore for every $f\in \C[x,y]_d$, the value $\vv[f]{\xi}{t}$ belongs to the finite
set
\[
\left(\bigoplus_{i=1}^{i_{t,d}} \N v_i\right) \cap [1,m_{t,d})\ ,
\]
and the  $\mm[d]{\xi}{t}$ belong to this set.

Now let $V$ be  the $\C$-subspace of $\C[\theta,x,x^t]$ consisting of polynomials $P$ with $\deg_{\theta}(P)\le d$  and
$\deg_{x}(P)<m_{t,d}$. The space $V$ is obviously finite-dimensional,  $V\cong \C^N$ after taking the basis given by monomials.

Consider the composition of the substitution map
\[
{\mathcal J}_{k}\times \C[x,y]_d\rightarrow \C[\theta][[x,x^t]]\ ,
\]
given by $(\xi,f)\mapsto f(x,\xi(x)+\theta x^t)$,
with truncation $ \C[\theta][[x,x^t]]\rightarrow V$, seen
as an algebraic morphism of $\C$-schemes.

For each value $m$, the `incidence' subset
\[
\{(\xi,f)\in{\mathcal J}_{k}\times \C[x,y]_d \,|\, \vv[f]{\xi}{t}\ge m\}
\]
is by definition the preimage of the Zariski-closed set
\[
\{ \eta \in  V\,|\,\ord_x( \eta(x))\ge m\}
\]
hence Zariski-closed. It is also closed under scalar multiplication on the second component,
so it determines a closed subset
$I_m\subset{\mathcal J}_{k}\times \P(\C[x,y]_d)$.

The locus in ${\mathcal J}_{k}$ where
$ \mm[d]{\xi}{t}\ge m$ is the projection
of $I_m$ to ${\mathcal J}_{k}$,
therefore it is Zariski-closed.
\end{proof}

\begin{Pro}\label{continuoust}
 For every $\xi(x)$, the function $t\mapsto \mm{\xi}{t}$
 (for $t \in [1,\infty)$) is Lipschitz continuous
 with Lipschitz constant 1.
\end{Pro}

\begin{proof}
 For every $f\in \C[x,y]$, the function $t\mapsto \vv[f]{\xi}{t}$
 is a tropical polynomial function of degree at most $\deg(f)$.
 Therefore, the scaled function $\mu_f:t\mapsto \vv[f]{\xi}{t}/\deg(f)$
 is continuous concave and piecewise affine linear with slopes in
 $\{0,1/\deg(f),2/\deg(f), \dots, 1\}$
 (compare with \cite[Corollary C]{BFJ}).
 In particular, it is Lipschitz continuous with Lipschitz constant at most 1.

 The function $t\mapsto \mm{\xi}{t}$ in the claim is
 $\sup_{f\in \C[x,y]}\{\mu_f\}$; therefore it is also Lipschitz continuous
  with Lipschitz constant at most 1 (and it is not hard to see that it is actually
  equal to 1).
 \end{proof}

\begin{Rem}
 We proved in Proposition~\ref{semicont} that for a fixed $t$,
 very general series $\xi(x)$ give the same, minimal, value
 $\mm{\xi}{t}$ which we denote $\hat\mu(t)$.
\end{Rem}
\vskip\baselineskip

 By the countability of the rational number field, it follows that very general series
 $\xi(x)$ give the same (minimal) function $\mm{\xi}{t}$ of $t\in \Q$.
 Continuity of the functions $\mm{\xi}{t}$ then imply
 that very general series give the same function over all of $\R$, and
 also the following:

\begin{Cor}
  The function $t\mapsto \hat\mu(t)$ is Lipschitz continuous with
Lipschitz constant 1.
 \end{Cor}

It is immediate to extend the definition of $\mu$ and $\hat\mu$ to the
tree $\T$ of all valuations centered at $p_1$. The continuity properties
of the resulting function $\hat\mu:\T\rightarrow \R$
---which we shall not need---
are summarized as follows:

 \begin{Teo}\label{lower-semicontinuous}
    The function  $\hat\mu:\T\rightarrow \R$ is
lower semicontinuous for the weak topology and continuous
for the strong topology.
 \end{Teo}

 \begin{proof}
As in the proof of propositon \ref{continuoust},
for all $f\in \C[x,y]$, let $\mu_f(v)=v(f)/\deg(f)$.
By definition of the weak topology, $\mu_f$ is continuous
for all $f$.
Then, $\hat\mu(v)= \sup_{f\in \C[x,y]}\{\mu_f(v)\}$,
as the supremum of a family of continuous functions,
 is lower semicontinuous.

In order to prove continuity for the strong topology,
one needs to show continuity along all arcs in the profinite tree $\T$.
  It is not hard to see that (with minor changes)
the proof of propositon \ref{continuoust} works for series
$\xi$ with rational exponents as in Remark \ref{tree},
showing the desired continuity.

Alternatively, given a strong neighbourhood $U$ of a given valuation $v_0$,
there is a model of the plane in which every $v\in U$ is quasimonomial.
Then Proposition \ref{continuoust} shows that $\hat\mu$ is
continuous in $U$.
 \end{proof}
\vskip\baselineskip

The next claim will show the first analogy to Nagata's conjecture.

\begin{Pro}
\label{squares}
 If $t$ is the square of an integer, then
 a very general quasimonomial valuation
 $\vv{\xi}{t}$ is minimal.
\end{Pro}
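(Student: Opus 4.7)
The plan is to combine the standard lower bound $\hat\mu(\xi, n^2) \geq \sqrt{n^2} = n$ (which follows from $\vol(v(\xi, n^2)) = 1/n^2$ together with the general inequality recalled in Remark \ref{mu-nef}) with an upper bound obtained by exhibiting a single quasimonomial valuation $v^*$ satisfying $\hat\mu(v^*) = n$, and then invoking upper semicontinuity (Proposition \ref{semicont}) to propagate this bound to a very general $\xi$.

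For the construction, let $p(x) \in \C[x]$ be a polynomial of degree exactly $n$ (for instance $p(x) = x + x^n$), and set $\xi^* := p$, viewed as an element of $\mathcal J_{n^2}$. The polynomial $C := y - p(x)$ is irreducible of degree $n$ in $\C[x,y]$, defines a smooth affine plane curve, and its germ at $O$ is exactly $y = \xi^*(x)$. A direct computation using Definition \ref{seriesdef} yields
\[
v^*(C) \;=\; \ord_x\bigl((p(x) + \theta x^{n^2}) - p(x)\bigr) \;=\; n^2.
\]

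The heart of the argument is the bound $v^*(f) \leq n\,\deg f$ for every $f \in \C[x,y]$. Given $f$, factor $f = C^k g$ with $C \nmid g$. Expanding $g = \sum a_{ij}\, x^i (y - p(x))^j$, Definition \ref{seriesdef} gives
\[
v^*(g) \;=\; \min_{a_{ij}\ne 0}(i + n^2 j) \;\leq\; \min_{a_{i0}\ne 0} i \;=\; \ord_x g(x, p(x)),
\]
and since $g$ and $C$ share no component, Bezout's theorem in $\P^2$ yields $\ord_x g(x,p(x)) \leq \deg g \cdot \deg C = n \deg g$. Using additivity of the valuation on products,
\[
v^*(f) \;=\; k\, v^*(C) + v^*(g) \;\leq\; k n^2 + n \deg g \;=\; n \deg f.
\]
Consequently $\mu_d(\xi^*, n^2) \leq nd$ for every $d$, and so $\hat\mu(\xi^*, n^2) \leq n$; combined with the lower bound this gives $\hat\mu(\xi^*, n^2) = n$.

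To promote the conclusion from $\xi^*$ to a very general $\xi$, recall that by Proposition \ref{semicont} the function $\xi \mapsto \mu_d(\xi, n^2)$ on $\mathcal J_{n^2}$ is upper semicontinuous, so its minimum (attained on a Zariski-dense open subset) is bounded above by its value at the particular point $\xi^*$, hence by $nd$. Intersecting these Zariski-dense open sets over the countably many values of $d$, a very general $\xi$ satisfies $\mu_d(\xi, n^2) \leq nd$ for every $d$, whence $\hat\mu(\xi, n^2) \leq n$; equality then follows from the lower bound, proving minimality. The only nontrivial step is spotting the right $\xi^*$: the key insight is to pick the germ $y = \xi^*(x)$ so that it extends to a global algebraic curve of the minimal possible degree $\sqrt{n^2} = n$, at which point Bezout delivers the upper bound essentially for free.
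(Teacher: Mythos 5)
Your proof is correct and follows essentially the same route as the paper's: both reduce to a single special $\xi$ by semicontinuity and both take that $\xi$ to be a polynomial of degree $n=\sqrt t$, so that the curve $y=\xi(x)$ carries the entire weight of the upper bound. The only cosmetic difference is that you run the bound through Bezout's theorem in $\P^2$ directly, whereas the paper phrases the identical computation on the blowup $S_K$ as ``the proper transform of $y=\xi(x)$ has self-intersection zero, hence is nef, and meets the valuation ideal class negatively.''
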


\begin{proof}
For integral values of $t$, the cluster of centers of $\vv{\xi}{t}$
consists of the first $t$ points infinitely near to the origin
along the branch $y=\xi(x)$, and for each integer $m=qt+r$ (with $0\le r<t$)
\[
\mathcal I_{m}=(\pi_K)_*(\O_{S_K}(-q(E_{1}+\dots+E_t)-(E_1+\dots+E_r)))\ .
\]

For  $d>0$ and very general $\xi$, we want to prove that
$\mm[d]{\xi}{t}\le d\sqrt{t}$ or, in other words, that for every integer
$m>d\sqrt{t}$, the valuation ideal
$\mathcal I_{m}$ has no sections of degree $d$:
\[
H^0(\O_{S_K}(dL-q(E_{1}+\dots+E_t)-(E_1+\dots+E_r))) = 0\ ,
\]
where $L$ denotes the pullback of a line to $S_K$.
By semicontinuity (Proposition~\ref{semicont})
it will be enough to see this 
for a particular choice of $\xi$, e.g., an
irreducible polynomial of degree $a=\sqrt t$.
But the proper transform on $S_K$ of the projectivized curve
\[
D \colon YZ^{a-1}=Z^a\xi(X/Z)
\]
defined by $\xi$ is then an irreducible curve of
self-intersection zero, therefore nef, and
\[
D\cdot (dL-q(E_{1}+\dots+E_t)-(E_1+\dots+E_r))) = d\sqrt t-m < 0\ . \qedhere
\]
\end{proof}

\section{Anticanonical surfaces}
\label{sec:anticanonical}

This section contains a complete description of
the Mori cone of $S_K$ for $v=\vv{\xi}{t}$ with $t\le 7$
(see Theorem \ref{polyhedral} and Proposition \ref{polyhedral-integers}),
and substantial information for $t=7+\frac{1}{n_2}$, $n_2\in \N$
(see Proposition \ref{no-2} and Corollary \ref{MoriCone}).
In these cases
the rational surface $S_K$ obtained by blowing up the
cluster of  centers of a valuation $v$ on the plane is \emph{anticanonical},
meaning it has an effective anticanonical divisor. Under
this hypothesis, adjunction becomes a very powerful tool to study the
geometry of $S_K$.

We begin by justifying that $S_K$ is anticanonical in these cases.

\begin{Pro}\label{anticanonicalt}
Let $\vv{\xi}{t}$ be a divisorial quasimonomial valuation (so $t$ is rational),
and $S_K$ the blowup of its cluster of centers.
Let $A=[1,7] \cup \{7+\frac{1}{n}\}_{n\in \N} \cup \{ 9 \} \subset \R$.
\begin{enumerate}
 \item If $t \in A$, then $S_K$ is anticanonical.
 \item If $S_K$ is anticanonical for very general $\xi$, then $t\in A$.
\end{enumerate}
\end{Pro}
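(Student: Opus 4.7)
My plan is to characterize effectivity of $-K_{S_K} = 3L - \sum E_i$ via a decomposition of any effective $D \in |-K|$ as $D = \tilde C + \sum_j b_j \tilde E_j$, where $\tilde C$ is the proper transform of a plane cubic $C = \sum_k \alpha_k C_k$ with $\sum_k \alpha_k \deg C_k = 3$ and aggregate multiplicities $m_i$ at the cluster points $p_i$. Using $\tilde E_j = E_j - \sum_{p_l \succ p_j} E_l$, matching the class $-K$ forces the recursion
\[
 b_i \;=\; m_i - 1 + \sum_{j:\, p_i \succ p_j} b_j.
\]
Thus $-K$ is effective iff there exists a plane cubic configuration whose multiplicities produce $b_j \geq 0$ for all $j$.

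For Part (1), I would first dispose of the cases with $s \leq 9$ by Riemann--Roch: since $S_K$ is rational, $h^2(-K) = h^0(2K_{S_K}) = 0$, and $\chi(-K) = 1 + K^2_{S_K} = 10 - s$, hence $h^0(-K) \geq 10 - s \geq 1$. This covers integer $t \in \{1, \ldots, 9\}$, $t = 7 + \tfrac{1}{n}$ for $n \leq 2$, and $t = 9$. For the remaining $t \in A$ with $s > 9$, I would give explicit constructions. For $t \in (1,7)$ non-integer (so $n_1 \leq 6$), take $C = C_2 + \ell$ where $C_2$ is a smooth conic of contact $5$ with $\Gamma$ and $\ell$ is the tangent line; the multiplicities $(m_1, m_2, m_3, m_4, m_5, 0, 0, \ldots) = (2,2,1,1,1,0,\ldots)$ yield $b_i > 0$ through the branch stretch, $b_{n_1+1} = 0$, and in the satellite tail the recursion $b_i = b_{i-1} + (b_{n_1} - 1)$ stabilizes non-negatively because $b_{n_1} \geq 1$. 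For $t = 7 + \tfrac{1}{n}$ with $n \geq 3$, take $C$ a nodal cubic with node at the origin and one branch of contact $7$ along $\Gamma$; the multiplicity vector $(2,1,1,1,1,1,1,0,0,\ldots)$ gives $b_i = 1$ for $i \leq 7$ and $b_j = 0$ for $j \geq 8$, producing the effective divisor $\tilde C + \sum_{j=1}^7 \tilde E_j$. The existence of this nodal cubic is a parameter count (9 cubic moduli against $3 + 1 + 5 = 9$ conditions: node, tangent direction, five jet-matching); the resulting 0-dimensional scheme is non-empty for very general $\xi$, and since $h^0(-K)$ is upper semi-continuous in $\xi$ the effectivity propagates to all $\xi$.

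For Part (2), the necessity direction, I would argue by contradiction: given $t \notin A$ and very general $\xi$, show every candidate decomposition forces $b_j < 0$ somewhere. The key inputs are the degree constraint $m_1 \leq 3$, the proximity inequalities $m_i \geq \sum_{p_j \succ p_i} m_j$ for any plane cubic configuration, and the contact bound that, for very general $\xi$, a smooth plane cubic has contact at most $9$ with $\Gamma$ and a reducible cubic has even smaller total contact (conic $+$ line gives at most $5+2 = 7$, triple tangent line only $2$). For $t > 9$, the required $\Sigma_{n_1} \geq n_1 \geq 10$ for free branch points exceeds the contact that any cubic can realize with a very general branch. For $t \in (7,9) \setminus \{7 + 1/n\}$, the continued fraction has $r \geq 3$ terms, so the cluster exhibits a \emph{second} branching level: after the satellites proximate to $p_{n_1}$, a further block of satellites proximate to $p_{n_1+n_2}$ appears, and the recursion propagates the deficit $b_{n_1+1} \leq 0$ through the second satellite chain, preventing any choice of $m_i$ (subject to proximity and $m_1 \leq 3$) from producing $b_j \geq 0$ throughout.

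The main obstacle is the detailed case analysis in Part (2), particularly for $t \in (7,9) \setminus \{7 + 1/n\}$, where I expect to need a careful accounting of how the proximity constraints at the second branching vertex conspire against any multiplicity sequence available to a plane cubic of total degree $3$; ensuring the nodal cubic of Part (1) actually exists for very general $\xi$ (rather than being ruled out by a hidden over-determination) is also delicate and would require a tangent-space argument on the moduli of pointed nodal cubics.
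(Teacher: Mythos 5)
Your $b_j$-recursion is the right bookkeeping, but the proposal has two concrete gaps, one in each part, and you are making the argument harder than necessary by not unloading $-K$ first.

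\textbf{Gap in Part (1).} For non-integer $t\in(1,7)$ you build the cubic from a smooth conic with contact $5$ along $\Gamma$ together with the tangent line. That conic exists only when $p_1,\dots,p_5$ are all free, i.e.\ when $n_1\geq 4$. For $n_1\leq 3$ the point $p_5$ is already a satellite (proximate to $p_{n_1}$), and a smooth conic with multiplicity $1$ at $p_{n_1},\dots,p_5$ violates the proximity inequality $m_{n_1}\geq m_{n_1+1}+\cdots$; so a smooth conic has contact at most $n_1+1\leq 4$ with the branch. You would need a different choice of cubic configuration in the range $t\in(1,4)\setminus\Z$ with $s>9$. (For instance, for $n_1=1$ one can take three lines concurrent at $p_1$, one of them the tangent, giving $m=(3,1,0,\dots)$ with $b_j$ increasing along the satellite chain.)

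\textbf{Gap in Part (2).} You claim that every $t\in(7,9)\setminus\{7+\tfrac1n\}$ has a continued fraction with $r\geq 3$ terms, and you derive the negativity of some $b_j$ from the ``second branching level.'' This is false: $t=8+\tfrac1{n_2}$ with $n_2\geq 2$ lies in $(8,9)\setminus A$ and has exactly $r=2$ terms, hence only one branching vertex. Your deficit-propagation argument, as written, does not touch this family, so Part (2) is incomplete. What is actually needed here is the count showing that a cubic singular at $p_1$ passing through $p_2,\dots,p_8$ (the condition one gets after forcing $b_j\geq 0$ with $n_1=8$) imposes $3+7=10$ conditions, one more than a pencil of cubics can absorb, so no such cubic exists for a very general $\xi$.

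\textbf{Relation to the paper's argument.} The paper proves both directions by first unloading $-\kappa=3L-\sum E_i$ along the exceptional components, which reduces the effectivity question to that of a single explicit class (of the form $3L-2E_1-(E_2+\cdots+E_{n_1})$ when $r=2$, or $3L-2E_1-(E_2+\cdots+E_{n_1+1})$ when $r\geq 3$). That reduction immediately pins down the exact cubic condition you must test, avoiding the enumeration over all possible $m$-vectors and making Part (2) a one-line parameter count depending only on $n_1$. Your recursion for $b_j$ is equivalent to the unloading computation when you plug in the target multiplicities, so if you first unload you recover the paper's case split cleanly and the two gaps above disappear. As written, the proposal needs the fixes noted and a precise enumeration of the continued-fraction cases; the paper's unloading shortcut is worth adopting.
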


\begin{proof}
The question is whether the anticanonical class
$-\kappa=3L-\sum E_i$ on $S_K$ (where $L$ denotes the pullback of a line)
has nonzero global sections.

Suppose $t$ is an integer. Then
$K$ consists of $t$ free points; if $t\le9$,
there is a cubic going through them all, so $-\kappa$ is effective.
On the other hand, for an integer $t>9$,
there is no such plane cubic for general $K$.
Thus (1) and (2) hold when $t$ is an integer.

Now suppose $t=n_1+\frac{1}{n_2}$ is a nonintegral rational.
Then $K=(p_1,\dots,p_{n_1+n_2})$
has $n_1+1$ free centers and $n_2-1>0$ satellites,
all of them proximate to $p_{n_1}$; so
$\widetilde E_{n_1}=E_{n_1}-E_{n_1+1}-\dots-E_{n_1+n_2}$.
A simple  unloading computation (see Remark \ref{UnloadRem}
and \ref{ClusterDivRem}) then shows that
\begin{eqnarray*}
 H^0(\O_{S_K}(-\kappa)) & = & H^0(\O_{S_K}(3L-\sum E_i)) \\
& \cong & H^0(\O_{S_K}(3L-2E_1-(E_2+\dots+E_{n_1}))
\end{eqnarray*}
(the divisors $\tilde E_{n_1}$, $\tilde E_{n_1-1}$, \dots, $\tilde E_1$
intersect negatively and have been subtracted).
Consequently,  $S_K$ is anticanonical exactly when there exists a cubic
singular at $p_1$ and going through the free points $p_2$, \dots, $p_{n_1}$.
(If $n_1>1$ then both $p_2$ and $p_3$ are free, so if the cubic is irreducible
its singularity is a node.)
For $n_1\leq 7$, there is always such a cubic, so (1) holds, while
for a general choice of the free points we must have $n_1\le 7$, so (2) holds.

If the continued fraction for $t$ has more than 2 coefficients $n_1, n_2, \dots, n_r$,
the corresponding unloading computation
consists in subtracting, for $i=r-1, r-2, \dots, 1$, the divisors
$\tilde E_{n_1+\dots+n_i}$, $\tilde E_{n_1+\dots+n_i-1}$, \dots
$\tilde E_{n_1+\dots+n_{i-1}+1}$, and leads to
$H^0(\O_{S_K}(3L-2E_1-(E_2+\dots+E_{n_1+1})),$ so
$S_K$ is anticanonical exactly when there exists a cubic
singular at $p_1$ and going through the free points $p_2$, \dots, $p_{n_1+1}$.
Such a cubic always exists if $n_1\leq 6$, so (1) holds,
and for a general choice of the free points, we must have $n_1+1\le 7$, so (2) holds.
\end{proof}

The next lemma is needed for the proof of Proposition \ref{polyhedral-integers}.

\begin{Lem}\label{Blemma}
  Let $\vv{\xi}{t}$ be a divisorial quasimonomial valuation (so $t$ is rational),
and $S_K$ the blowup of its cluster of centers.
Let $B=[1,3] \cup \{3+\frac{1}{n}\}_{n\in \N} \cup \{ 4,5 \} \subset \R$.
\begin{enumerate}
 \item If $t \in B$, then $-\kappa-L$ is effective on $S_K$.
 \item If $-\kappa-L$ is effective on $S_K$ for very general $\xi$,
then $t\in B$.
\end{enumerate}
\end{Lem}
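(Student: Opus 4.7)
The plan is to mirror the proof of Proposition~\ref{anticanonicalt}, replacing the cubic class $-\kappa$ by the conic class $-\kappa-L=2L-\sum E_i$: effectivity asks whether a plane conic passes through the cluster $K$ with the prescribed infinitely-near multiplicities. Throughout I would apply the unloading procedure of Remarks~\ref{UnloadRem} and~\ref{ClusterDivRem} to $D_0=2L-\sum E_i$ and then examine the reduced divisor case by case, with the twist that here — unlike in Proposition~\ref{anticanonicalt} — one must also unload along the $(-1)$-curve $\widetilde L_\Gamma$ (the tangent line to $\Gamma$ at $p_1$).

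For integer $t$, $D_0=2L-E_1-\cdots-E_t$ already satisfies $D_0\cdot\widetilde E_j\ge 0$ and $D_0\cdot\widetilde L_\Gamma=0$, so no unloading is needed. A section corresponds to a plane conic $C$ with $I_{p_1}(C,\Gamma)\ge t$. Since the linear system of conics has projective dimension $5$ and passage through an infinitely-near point on $\Gamma$ is an independent linear condition for very general $\xi$, this exists precisely when $t\le 5$, giving the integer values in $B$ and excluding $t\ge 6$.

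For nonintegral $t=n_1+1/n_2$, the fact $D_0\cdot\widetilde E_{n_1}=1-n_2<0$ triggers unloading of $\widetilde E_{n_1}$, after which successive unloading of $\widetilde E_{n_1-1},\ldots,\widetilde E_1$ reduces the question to effectivity of $2L-2E_1-E_2-\cdots-E_{n_1}$, i.e., to finding a conic singular at $p_1$ through $p_2,\ldots,p_{n_1}$. For $n_1\le 3$, the double tangent $C=2L_\Gamma$ realizes such a divisor, and I would display the explicit effective representative $D=2\widetilde L_\Gamma+\widetilde E_1+2\widetilde E_2+\widetilde E_3$. For $n_1\ge 4$, further unloading along $\widetilde L_\Gamma=L-E_1-E_2$ terminates at the class $L-E_1-E_2-E_3$, demanding a line osculating $\Gamma$ to order $3$ at $p_1$, which fails for very general $\xi$ (no inflection). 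This accounts for the values $3+\frac{1}{n}$ in $B$ and excludes $n_1\ge 4$.

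For rational $t=[n_1;n_2,\ldots,n_r]$ with $r\ge 3$, I would set up the same ansatz $D=2\widetilde L_\Gamma+\sum a_j\widetilde E_j$ and match $E_k$-coefficients, obtaining the recursion
\[
a_k=m_k+a_{k-1}+\sum_{\substack{p_k\succ p_j\\j<k-1}}a_j-1,\qquad m_1=m_2=2,\ m_j=0\ (j\ge 3).
\]
For $n_1\le 2$ I would verify that the boundary value $a_{n_1}\ge 1$ propagates into every satellite whose proximity chain passes through $p_{n_1}$, keeping $\{a_k\}$ nonnegative across all continued-fraction subchains and thereby handling the remaining rationals in $[1,3]\cap B$. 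For $n_1\ge 3$ with $r\ge 3$, the analogous recursion eventually produces some $a_j<0$; equivalently, fully unloading $D_0$ (through $\widetilde L_\Gamma$ and the subsequent exceptional components) terminates at a line class $L-\sum E_i$ demanding more infinitely-near contact on $\Gamma$ than a very general $\xi$ provides, so $-\kappa-L$ is not effective. The main obstacle will be the bookkeeping in this last case, since the proximity structure is inhomogeneous across the continued-fraction coefficients, with the chain alternating between free and satellite segments; but the computation is entirely analogous to (and a modest elaboration of) the unloading already carried out in Proposition~\ref{anticanonicalt} and Remark~\ref{ClusterDivRem}.
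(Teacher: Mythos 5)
Your plan is substantively the same as the paper's: unload $D_0=-\kappa-L$ along the exceptional components, subtract the tangent line $\widetilde L_\Gamma$ when it is met negatively, and read off effectivity of the remaining class. Two small points are worth flagging. First, for continued fractions of depth $r\ge 3$ the paper keeps the bookkeeping much lighter than your recursion on the $a_j$: it records that the unloading preserves $H^0$ and lands on the intermediate class $2L-2E_1-(E_2+\cdots+E_{n_1+1})$ (note the extra $E_{n_1+1}$ compared to the two-term case, which your account does not distinguish), after which a single subtraction of $\widetilde L$ and one more unloading finishes; if you do use the ansatz $D=2\widetilde L_\Gamma+\sum a_j\widetilde E_j$, then detecting some $a_j<0$ must still be read through the unloading principle (i.e., via the $H^0$-isomorphisms), since a negative coefficient in one particular decomposition is not by itself a proof that the class fails to be effective. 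Second, the paper writes $\widetilde L=L-E_1-\cdots-E_i$ with $2\le i\le n_1+1$, letting the contact order vary; this lets it prove part (1) uniformly for every $\xi$, whereas your explicit representative $2\widetilde L_\Gamma+\widetilde E_1+2\widetilde E_2+\widetilde E_3$ presumes $\widetilde L_\Gamma=L-E_1-E_2$ and $n_1=3$ and so would need to be adjusted (or one should invoke upper semicontinuity of $h^0$) to cover special $\xi$ and the other $n_1\le 3$. Also a terminological caution: subtracting $\widetilde L_\Gamma$ is removal of a fixed component, not unloading, which in this paper refers specifically to subtracting exceptional components met negatively. None of these affects the correctness of the conclusion; the route is the same as the paper's.
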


\begin{proof}
The proof is similar to the one for Proposition
\ref{anticanonicalt}. The integer cases are well known and easy to see.

Next say $t=n_1+\frac{1}{n_2}$ is a nonintegral rational.
Then $(-\kappa-L)\cdot \widetilde E_{n_1}<0$, so unloading
(as in the proof of Proposition \ref{anticanonicalt}) gives
$H^0(\O_{S_K}(-\kappa-L))
\cong H^0(\O_{S_K}(-\kappa-L-\widetilde E_{n_1})
\cong H^0(\O_{S_K}(2L-2E_1-(E_2+\dots+E_{n_1}))\ .$
The class of the proper transform of the line
through $p_1$ in the direction of $p_2$ is $\widetilde L=L-E_1-\cdots-E_i$
for some $2\leq i\leq n_1+1$. Thus
$(2L-2E_1-(E_2+\dots+E_{n_1}))\cdot \widetilde L<0$.
Therefore, if $n_1\leq 3$, subtracting $\widetilde L$ and unloading we have
$H^0(\O_{S_K}(2L-2E_1-(E_2+\dots+E_{n_1}))) \cong H^0(\O_{S_K}(L-E_1-E_2))\neq 0$.
Thus (1) holds for such $t$,
while for a general choice of the free points we must have $n_1\le 3$,
so (2) holds for such $t$.

Finally, if the continued fraction for $t$ has more than 2 coefficients $n_i$,
the corresponding unloading computation leads to
$H^0(\O_{S_K}(-\kappa-L))
\cong H^0(\O_{S_K}(2L-2E_1-(E_2+\dots+E_{n_1+1}))\ .$
Again $\widetilde L=L-E_1-\cdots-E_i$
for some $2\leq i\leq n_1+1$, so subtracting $\widetilde L$ and unloading
gives
$H^0(\O_{S_K}(2L-2E_1-(E_2+\dots+E_{n_1+1}))) \cong
H^0(\O_{S_K}(L-E_1-\cdots-E_{n_1+2-i}))$.
The latter is clearly nonzero if $n_1\leq 3$, so (1) holds, and
for a general choice of the free points, we must have $n_1\le 2$,
so (2) holds.
\end{proof}

\begin{Rem}
Note that if $t\le 7$, then $K$ has at most 7 free centers, so there is always
a divisor $\widetilde \Gamma$ in $|3L-2E_1-\sum_{i>1, p_i\text{ free}} E_i|$.
For general $\xi$,
$p_1, p_2, p_3$ are not aligned and $p_1, \dots, p_6$ do not belong to a conic,
so $\widetilde\Gamma$ can be assumed to be the proper transform of
an irreducible nodal cubic $\Gamma$, and
$\Gamma_K=\widetilde \Gamma + \sum \widetilde E_i$ on $S_K$
is a particular anticanonical divisor
which contains all exceptional components (independently of $t$).
For nongeneral $\xi$, $\widetilde \Gamma$ may be reducible, but
$\Gamma_K=\widetilde \Gamma + \sum \widetilde E_i$
still determines an effective anticanonical divisor which
contains all exceptional components.
\end{Rem}

\begin{Teo}\label{polyhedral}
 Let $\vv{\xi}{t}$ be a divisorial quasimonomial valuation with $t\le 7$,
and $S_K$ the blowup of its cluster of centers.
Let $s$ be the number of centers.
Then the number of $(-1)$-curves other than $E_s=\widetilde E_s$
is at most $s$, and $\Mor(S_K)$ is a polyhedral cone,
spanned by the classes of the $\widetilde E_i$,
$\widetilde \Gamma$ and the $(-1)$-curves,
where $\Gamma$ is a nodal cubic as above.
\end{Teo}

\begin{proof}
 Let 
 $\Gamma_K$ be an effective  anticanonical divisor containing all exceptional
 components $\widetilde E_i$; for general $\xi$ we can write
 $\Gamma_K=\widetilde \Gamma + \sum \widetilde E_i$, where $\Gamma$ is a nodal cubic.
 Particular cases in which the cubic is reducible are treated similarly and
 we leave the details to the reader. We claim that
 every irreducible curve $C\subset S_K$ which is not a component of
 $\Gamma_K$ lies in $\Mor(S_K)^\preceq$. Indeed, $C$ is the proper transform
 of a curve $\pi_K(C)\subset \P^2$; if $\pi_K(C)$ does not go through the
 origin $p_1$ of $K$, then $C$ intersects $\widetilde \Gamma$ and so
\[
 C\cdot \kappa=-(C\cdot(\Gamma_K))= -(C\cdot \widetilde \Gamma)<0\ .
\]
Otherwise, $C$ intersects some $\widetilde E_i$ and so
\[
C\cdot \kappa = -(C\cdot(\Gamma_K))\le -(C\cdot \widetilde E_i)<0\ .
\]
Thus by Mori's cone theorem, $\Mor(S_K)$ is generated by the rays spanned
by the components of $\Gamma_K$ and the $(-1)$-curves,
so it only remains to bound the number of $(-1)$-curves.

But a $(-1)$-curve $C$ satisfies $C\cdot \kappa=-1$,
so if it is not a component of $\Gamma_K$, it must intersect it in exactly one component.
Write $C=dL-\sum m_i E_i$. If $C$ meets only $\widetilde E_k$,
it must satisfy $m_j=\sum_{p_i\succ p_j} m_i$ (i.e., $C\cdot \widetilde E_j=0$) for all $j\ne k$,
$m_k=\sum_{p_i\succ p_k} m_i+1$ (i.e., $C\cdot \widetilde E_k=1$) and
$3d-\sum m_i=1$ (i.e., $C\cdot \Gamma_K=1$). These are $s+1$ linearly independent conditions which uniquely determine the class of $C$; so there is at most
one $(-1)$-curve meeting $\widetilde E_k$. On the other hand,
$C$ cannot meet only $\widetilde\Gamma$,
because then $C\cdot \widetilde E_j=0$ for all $j$, which implies
$m_j=C\cdot E_j=0$ for all $j$, and hence $1=C\cdot \Gamma_K=3d-\sum m_i=3d$.
Thus the number of $(-1)$-curves not
components of $\Gamma_K$ is at most $s$.
\end{proof}

\begin{Rem}
  Along the way we proved that there are finitely many
curves with negative selfintersection when $t\le 7$.
Indeed, if $C$ is such a curve, and it is not a component
of $\Gamma_K=\widetilde \Gamma + \sum \widetilde E_i$
then $C\cdot \kappa< 0$, which implies
$0>C^2+C\cdot \kappa=2g-2$, so $C$ is a rational curve
and in fact a $(-1)$-curve, of which there are
at most $s$.
\end{Rem}
\vskip\baselineskip

For $t\in B$, one can be a bit more precise: not only do the
negative curves generate the Mori cone over $\R$,
they generate the monoid of effective classes (over $\N$).

\begin{Pro}\label{polyhedral-integers}
 Let $\vv{\xi}{t}$ be a divisorial quasimonomial valuation with $t\in B$,
$t>1$, and $S_K$ the blowup of its cluster of centers.
Let $s$ be the number of centers.
Then the monoid in $\Pic S_K\cong \Z^{s+1}$ of the effective classes
has a minimal (finite) set of generators consisting of
the classes of the $\widetilde E_i$, the $(-1)$-curves, and
the components of $-\kappa-L$ meeting
$-\kappa-L$ negatively.
\end{Pro}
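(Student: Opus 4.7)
Let $\mathcal G$ denote the proposed generating set. Each $G \in \mathcal G$ is an irreducible effective class with $G^2 < 0$: for $\widetilde E_i$ and $(-1)$-curves this is immediate, and for a component $C$ of $-\kappa-L$ with $C \cdot (-\kappa-L) < 0$, writing $-\kappa-L = aC + R$ with $a \geq 1$ and $R$ effective not having $C$ as a component yields $aC^2 = C \cdot (-\kappa-L) - C \cdot R < 0$, so $C^2 < 0$. Minimality of the generating set then follows from the irreducibility of each element. To prove $\mathcal G$ generates the effective monoid, I would induct on $D \cdot A$ for a fixed ample class $A$ on $S_K$.

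\textbf{Reduction to a generator.} Given nonzero effective $D$, the strategy is to find $G \in \mathcal G$ with $D - G$ effective; since $G \cdot A > 0$, the $A$-degree strictly decreases and the inductive hypothesis applies. If $D \cdot G < 0$ for some $G \in \mathcal G$, then the negative self-intersection of $G$ forces $G$ to be a component of $D$, hence $D - G$ is effective. Otherwise $D \cdot G \geq 0$ for every $G \in \mathcal G$, and I claim $D$ is nef. By Proposition \ref{polyhedral}, $\Mor(S_K)$ is generated by $\mathcal G$ together with $\widetilde \Gamma$; since $\widetilde \Gamma^2 \geq 0$ for all $t \in B$ (a direct computation on each stratum), writing $D = a\widetilde \Gamma + D''$ with $D''$ effective and not containing $\widetilde\Gamma$ in its support gives $D \cdot \widetilde \Gamma \geq a \widetilde \Gamma^2 \geq 0$. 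Hence $D$ meets every Mori cone generator nonnegatively, i.e.\ $D$ is nef.

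\textbf{Nef case; principal obstacle.} For nef $D$ I would split according to $D \cdot L$. If $D \cdot L = 0$, then $D$ is supported on the exceptional divisor and decomposes as a nonnegative integer sum of the $\widetilde E_i$, which are the irreducible exceptional components. If $D \cdot L > 0$, then $G := L - E_1 - E_2$ lies in $\mathcal G$ (as a $(-1)$-curve, being the proper transform of the tangent line to the germ at $p_1$, valid for all $t \in B$ with $t > 1$), and the delicate point is to verify that $D - G$ is effective: writing $D = dL - \sum a_i E_i$ with $d \geq 1$ and the nef conditions (the proximity inequalities $a_i \geq a_{i+1}$ or their analogs at satellite centers, together with $d \geq a_1 + a_2$ from $D \cdot G \geq 0$), one converts $D - G$ into the $\mathcal G$-basis and verifies that all coefficients are nonnegative. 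The principal obstacle is making this verification uniform across the strata $t \in \{2,3,4,5\} \cup \{3+\tfrac{1}{n}\}$, where the shape of $\mathcal G$ and the proximity structure vary; the effectiveness of $-\kappa - L$ (Lemma \ref{Blemma}) enters precisely to ensure that the additional generators coming from components of $-\kappa - L$ fill out the $\mathcal G$-basis in the satellite cases $t = 3 + \tfrac{1}{n}$ and $t = 5$.
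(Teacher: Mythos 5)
The paper's proof is a one-line citation: since Lemma \ref{Blemma} guarantees $-\kappa-L$ is effective on $S_K$ for $t\in B$, the statement follows from a general result about anticanonical rational surfaces, namely \cite[Proposition III.ii.1]{Har98}. Your proposal attempts a direct inductive proof, which is a genuinely different (and more elementary in spirit) route, but it contains two real gaps.

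First, you openly acknowledge that the core of the nef case — showing $D - (L-E_1-E_2)$ is effective when $D$ is nef with $D\cdot L>0$, and doing so uniformly across all $t\in B$ — is unresolved. This is precisely where the combinatorial and geometric content of the result lives, and it is what \cite[Proposition III.ii.1]{Har98} packages. An induction framed as yours is plausible, but without that verification (which must also contend with proximity inequalities at satellite centers) there is no proof.

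Second, and more concretely, the claim that $L-E_1-E_2$ is always a $(-1)$-curve in $\mathcal G$ for $t>1$ is false without a genericity hypothesis on $\xi$, and the proposition is stated (and is used in the paper, together with Lemma \ref{Blemma}) for \emph{arbitrary} quasimonomial $\vv{\xi}{t}$ with $t\in B$. If, for instance, $\xi=0$ and $t=3$, the tangent line at $p_1$ passes through $p_1,p_2,p_3$, so its proper transform is $\widetilde L=L-E_1-E_2-E_3$, a $(-2)$-curve; the class $L-E_1-E_2$ is then reducible (it equals $\widetilde L + E_3$) and lies in none of the three families comprising $\mathcal G$. In this situation $\widetilde L$ \emph{is} a component of $-\kappa-L$ meeting it negatively (one computes $\widetilde L\cdot(-\kappa-L)=-1$), which is exactly why that third family of generators appears in the statement; your argument must subtract $\widetilde L$ rather than $L-E_1-E_2$ in such cases. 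The paper's Lemma \ref{Blemma} (and its proof) explicitly allows $\widetilde L=L-E_1-\cdots-E_i$ for $2\le i\le n_1+1$; your proposal assumes $i=2$ throughout. The setup and the reduction-to-nef step are sound, but both of these issues must be resolved before the argument constitutes a proof.
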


\begin{proof}
Thanks to Lemma \ref{Blemma}, we can apply \cite[Proposition III.ii.1]{Har98}.
\end{proof}

\begin{Rem}\label{n1=4}
A similar result holds for any divisorial quasimonomial valuation $\vv{\xi}{t}$ when $t=4+\frac{1}{n_2}$,
namely the effective monoid in $\Pic S_K$ is generated by $\widetilde E_i$, $i=1,\ldots, s=n_1+n_2$, and the proper transform
$\widetilde E_0$ of $L-E_1-E_2$ (and $Q=2L-E_1-\cdots-E_5$ if $\widetilde E_0=L-E_1-E_2$, i.e., if $p_3$ does not belong to the line through
$p_1$ in the direction of $p_2$). We sketch the argument
in case $\widetilde E_0=L-E_1-E_2$.
Take the basis $D_0,\ldots,D_s$ for the divisor class group of $S_K$,
satisfying $D_i\cdot \widetilde E_j=\delta_{ij}$, where
$\delta_{ij}$ is Kronecker's delta, hence $\delta_{ij}=0$ for $i\neq j$ and 1 if $i=j$.
(Thus $D_i$ is just the basis dual to $\widetilde E_j$, specifically:
$D_0=L$,
$D_1=L-E_1$,
$D_2=2L-E_1-E_2$,
$D_3=2L-E_1-E_2-E_3$,
$D_4=2L-E_1-E_2-E_3-E_4$,
$D_5=Q=2L-E_1-E_2-E_3-E_4-E_5$,
$D_6=4L-2E_1-2E_2-2E_3-2E_4-E_5-E_6$,
$D_7=6L-3E_1-3E_2-3E_3-3E_4-E_5-E_6-E_7$,
$D_8=8L-4E_1-4E_2-4E_3-4E_4-E_5-E_6-E_7-E_8$,
and so on, so for $4<i\leq s-4$ we have
$D_{i+4}=2iL-iE_1-iE_2-iE_3-iE_4-E_5-\ldots-E_{i+4}$.)
Every prime divisor $D$ not among the $\widetilde E_j$ is $\sum_i(D\cdot \widetilde E_i)D_i$, hence
it suffices to check the divisors $D_i$.
It is easy to write down the classes $D_i$ explicitly and then to check that each
$D_i$ is a nonnegative integral sum of classes $\widetilde E_j$, $j\geq 0$, when $i<5$,
and a nonnegative integral sum of the classes $Q$ and $\widetilde E_j$, $j\geq 1$, when $i\geq 5$.
(Essentially the same argument works when $\widetilde E_0=L-E_1-\cdots-E_l$
for $l>2$, except the result is that
$D_i$ is a nonnegative integral sum of the classes $\widetilde E_j$, $j\geq 0$,
for all $i$. In this case we note that $Q\cdot \widetilde E_0<0$ so $Q$ is no longer prime,
and $\widetilde E_0\cdot \widetilde E_4$ having to be nonnegative forces
$l\leq 5$.)

In fact, we can show that
a similar result holds for $t=n_1+\frac{1}{n_2}$ also for $n_1=5$ and $6$, namely
that there are only finitely many prime divisors of negative self-intersection on $S_K$,
and they generate the effective monoid. The proof is more involved, however,
since, for $n_1=5$, $D_6=2L-E_1-\cdots-E_6$ need not be effective
but it could be and if it is, it may but might not be a prime divisor.
Likewise, for $n_1=6$, additional cases arise:
$3L-2E_1-E_2-\cdots-E_7$ and $5L-2E_1-\cdots-2E_6-E_7-E_8$ are effective
but may or might not be prime, and
$2L-E_1-\cdots-E_6$ and $2L-E_1-\cdots-E_7$
may or might not be effective.
Nonetheless, the proof follows similar lines (in each of the several
cases, find an explicit finite set of
generators for the effective monoid, and then show each generator
is a sum of negative curves). Because checking the various cases is somewhat lengthy,
we do not include the proof here.
\end{Rem}
\vskip\baselineskip

For $7<t<8$, it is not clear which values of $t$ give polyhedral Mori cones,
but C.~Galindo and F.~Monserrat \cite{GM05} give some positive
results in this context. In particular, their Corollary 5, (1)
shows that for $t=7+1/n_2$ with
$n_2=1,2,\dots,8$,  $\Mor(S_K)$ is polyhedral. We show this result
is sharp, in the sense that $\Mor(S_K)$ is not polyhedral for $n_2>8$,
provided that $\xi$ is very general (see Corollary \ref{MoriCone}).
On the other hand, parts (2) and (3) of \cite[Corollary 5]{GM05}
are sharpened by Theorem \ref{polyhedral} above.

In preparation for proving Corollary \ref{MoriCone}, we first prove
 a result concerning prime divisors $C$ with $C^2<-1$.

\begin{Pro}\label{no-2}
 Let $\vv{\xi}{t}$ be a very general divisorial quasimonomial valuation with $t=7+1/n_2$
 for $n_2\ge1$, and let $S_K$ be the blowup of its cluster of centers.
The only prime divisors $C$ in $S_K$ with $C^2\le -2$ are components of the exceptional
divisors $E_i$.
\end{Pro}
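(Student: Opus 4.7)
The plan is to combine adjunction with an explicit effective anticanonical divisor to pin down any hypothetical prime divisor $C\subset S_K$ with $C^2\le-2$ that is not a component of the exceptional locus, reducing the problem to a finite list of candidate numerical classes, and then to eliminate those classes by an upper-semicontinuity argument in $\xi$.

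\textbf{Step 1 (Adjunction and anticanonical representative).} Let $C=dL-\sum m_iE_i$ be a prime divisor with $C^2\le -2$ and $C\ne\widetilde E_i$ for all $i$. Since $g_a(C)\ge 0$, adjunction gives $C\cdot(-\kappa)\le 0$. By Proposition~\ref{anticanonicalt} the surface $S_K$ is anticanonical. The unloading calculation, together with the existence (for very general $\xi$) of the unique nodal cubic through $p_1,\dots,p_7$ with node at $p_1$, yields the explicit decomposition
\[
-\kappa \;=\;\widetilde D+\widetilde E_1+\cdots+\widetilde E_7,\qquad \widetilde D\sim 3L-2E_1-E_2-\cdots-E_7,
\]
where $\widetilde D$ is a $(-1)$-curve. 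Note that the exceptional components $\widetilde E_j$ with $j\ge 8$ do \emph{not} appear in this representative.

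\textbf{Step 2 (Numerical consequences).} Since $C$ is distinct from every component above, $C\cdot\widetilde D\ge 0$ and $C\cdot\widetilde E_i\ge0$ for $1\le i\le 7$; combined with Step~1, this forces $C\cdot(-\kappa)=0$ and hence component-by-component
\[
C\cdot\widetilde D=0,\qquad C\cdot\widetilde E_i=0\ \ (1\le i\le7),\qquad C^2=-2.
\]
Using the classes of the $\widetilde E_i$ (Remark~\ref{ClusterDivRem}) and of $\widetilde D$, these identities translate into $m_1=\cdots=m_7=\sum_{j=8}^{7+n_2}m_j$ and $3d=8m_1$, so $(d,m_1)=(8k,3k)$ for some positive integer $k$. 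The self-intersection condition then becomes the Diophantine system
\[
\sum_{j=8}^{7+n_2}m_j=3k,\qquad \sum_{j=8}^{7+n_2}m_j^2=k^2+2,\qquad m_8\ge m_9\ge\cdots\ge m_{7+n_2}\ge0,
\]
which has only finitely many solutions for each $k$ (for instance, $k=1$ admits only $(m_8,m_9,m_{10})=(1,1,1)$, requiring $n_2\ge 3$).

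\textbf{Step 3 (Ruling out the candidates; main obstacle).} It remains to show that for very general $\xi$ none of these candidate classes is realized by a prime divisor. For each candidate $C$ the locus $\{\xi:h^0(\mathcal{O}_{S_K}(C))\ge 1\}\subset\mathcal J_k$ is Zariski-closed by upper semicontinuity (the same mechanism as in the proof of Proposition~\ref{semicont}), so it suffices to exhibit, for each class, a single $\xi_0$ where either $h^0(C)=0$ or every effective representative of $C$ contains $\widetilde D$ or some $\widetilde E_i$ as a component (contradicting primality of $C$). A parameter count on plane curves of degree $8k$ with the prescribed multiplicities along the cluster shows that the expected dimension of $|C|$ equals $\chi(\mathcal{O}_{S_K}(C))=0$; for a generic polynomial specialization of $\xi$ the conditions are independent, and one verifies that either $h^0(C)=0$ or the unique member of $|C|$ has $\widetilde D$ as a fixed component. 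The main difficulty is making this verification uniform in $k$: for $k=1$ it is transparent, and for $k\ge 2$ one iterates, peeling off copies of $\widetilde D$ or of the supraminimal $(-1)$-curves of Section~\ref{sec:supraminimal}. Since only countably many candidate classes arise, a very general $\xi$ lies outside the countable union of the resulting proper Zariski-closed subsets, and no such prime divisor $C$ can exist.
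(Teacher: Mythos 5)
Your Steps~1 and~2 are correct and essentially match the paper's reduction: adjunction plus the explicit anticanonical divisor $\Gamma_K=\widetilde D+\widetilde E_1+\cdots+\widetilde E_7$ (which misses $\widetilde E_j$ for $j\ge 8$) forces $C\cdot\kappa=0$, $C^2=-2$, and the Diophantine constraints you state. (Your formulation is in fact slightly cleaner than the paper's: it shows directly that no prime $C$ outside the exceptional locus can have $C^2<-2$, and your numerics for $n_2=2$ already rule out solutions without invoking Manin's classification, which the paper uses.)

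The genuine gap is in Step~3. You reduce correctly to showing that, for very general $\xi$, none of the candidate classes $C=8kL-3k(E_1+\cdots+E_7)-m_8E_8-\cdots-m_sE_s$ is represented by a prime divisor, and you correctly observe that semicontinuity over $\mathcal J_k$ reduces this to checking it for a single specialization per class. But the actual verification is only sketched: you assert that $\chi(\mathcal O_{S_K}(C))=0$ and that ``for a generic polynomial specialization of $\xi$ the conditions are independent,'' and you acknowledge that making this uniform in $k$ is the real difficulty. That difficulty is precisely what remains unproved. There are infinitely many candidates (one family for each $k\ge 1$ and each partition), and a case-by-case parameter count or ``peeling off $\widetilde D$'' does not obviously terminate; moreover, even when $h^0(C)>0$ you must show every member is non-prime, which is stronger than a vanishing statement. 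The paper closes this gap with Lemma~\ref{lemmagamma}, which handles all $k$ at once by restricting to the anticanonical curve $\Gamma_K$: since $C\cdot\Gamma_K=0$, the restriction $\O_{\Gamma_K}(C)$ is a degree-zero line bundle on $\Gamma_K$ of the explicit form $\O_{\Gamma_K}(bB'-3bp_8)$ with $B'$ fixed and $p_8$ very general; because the identity component of $\Pic(\Gamma_K)$ is $\C^*$ and hence has no nontrivial torsion coming from a very general point, this bundle has no sections, and the restriction exact sequence then gives $h^0(C-\Gamma_K)=h^0(C)$, contradicting primality of $C$. This single argument replaces your entire Step~3 and is the missing idea; without it (or an equivalent uniform mechanism), the proposal does not establish the proposition.
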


\begin{proof}
As before, let $\Gamma$ be  a nodal cubic curve which has its node
at the origin and goes through six additional free centers, $p_2, \dots, p_7\in K$.
Then  $\Gamma_K=\widetilde \Gamma + \sum_{i=1}^7 \widetilde E_i$ on $S_K$
is the unique effective anticanonical divisor.

By adjunction we have $C^2+\kappa_{S_K}\cdot C=2g-2$, so
$C^2<-2$ implies $\Gamma_K\cdot C<0$, hence
$C$ is a  component of $\Gamma_K$.
Computing the self-intersection of each of them
shows that the only possibility is $C=\widetilde E_7=E_7-E_8-\cdots-E_{s}$
where again $s$ is the total number of blowups and $C^2=-1-n_2$.

By adjunction again, if $C^2=-2$, then $C$ is rational
and $\kappa_{S_K}\cdot C=0$, i.e., it is a $(-2)$-curve.
Thus the question is what $(-2)$-curves can occur on $S_K$.
The exceptional components $\widetilde E_i$ for $i\ne 7, s$
are $(-2)$-curves. Now assume that $C$ is not one of them.
Then $\Gamma_K\cdot C=0$ implies
$C\cdot \widetilde E_i=0$ for $i=0,\ldots,7$, and $C\cdot \widetilde E_i\geq0$ for $i>7$.

Write $C=dL-m_1E_1-\cdots-m_sE_s$. The constraint $C\cdot \widetilde E_7=0$ gives
$m_7=m_8+\cdots+m_s$. The constraints $C\cdot \widetilde E_i=0$ for $i=1,\ldots,6$ give
$m_1=\cdots=m_7$. Taking $m=m_1$, $C\cdot \widetilde \Gamma=0$ gives
$3d=7m+m_8+\cdots+m_s=8m$, so $d=8m/3$. Note that $d$ is an integer.

Consider the case that $n_2=1$. Then $-2=C^2=(8m/3)^2-8m^2=-8m^2/9$.
This has no integer solutions, so no $C$ exists.

Next consider the case that $n_2=2$, so $s=9$. The possible solutions
$C$ to $C^2=-2$, $C\cdot \kappa_{S_K}=0$ with $C\cdot L\geq0$ are known
(see the second half of the proof of \cite[Proposition 25.5.3]{Man86}); they are:
$(E_i-E_j)-s\kappa_{S_K}$ with $1\leq i,j\leq 9$, $i\neq j$, $s\geq0$;
$(L-E_i-E_j-E_k)-r\kappa_{S_K}$ with $1\leq i,j,k\leq 9$, $i, j, k$ distinct, $r\geq0$;
$(2L-E_{i_1}-\cdots-E_{i_6})-r\kappa_{S_K}$ with $1\leq i_j\leq 9$, $i_j$
distinct for $1\leq j\leq 6$, $r\geq0$; and
$(3L-2E_{i_1}-E_{i_2}-\cdots-E_{i_8})-r\kappa_{S_K}$,
$1\leq i_j\leq 9$, $i_j$ distinct for $1\leq j\leq 8$, $r\geq0$. An exhaustive check shows that each of these divisors
intersects some exceptional component or $\Gamma$ negatively, and thus
is either itself a component of an exceptional curve, or is not reduced or irreducible.

Now consider the case that $n_2\geq3$, so $s\geq10$, and we can write
$C=dL-m(E_1+\cdots+E_7)-m_8E_8-\cdots-m_sE_s=
(8m/3)L-m(E_1+\cdots+E_7)-m_8E_8-\cdots-m_sE_s$.
Let $m=3b$, so $C=8bL-3b(E_1+\cdots+E_7)-m_8E_8-\cdots-m_sE_s$.
Then $\Gamma_K\cdot C=0$ gives  $3b-m_8-\cdots-m_s=0$ and $C^2=-2$
gives $b^2-m_8^2-\cdots-m_s^2=-2$. Numerical considerations no longer suffice;
there are many solutions to $3b-m_8-\cdots-m_s=0$ and $b^2-m_8^2-\cdots-m_s^2=-2$.
For example, we have $C=8L-3(E_1+\cdots+E_7)-E_8-E_9-E_{10}$
(i.e., $s=10$, $n_2=3$, $b=1$, and $m_8=m_9=m_{10}=1$).
The following lemma however shows that such $C$ can not be the
class of a prime divisor, and finishes the proof.
\end{proof}

\begin{Lem}
\label{lemmagamma}
Let $S_K$ be as in Proposition \ref{no-2}. Then there is no prime divisor $C$ on $S_K$
with $C\cdot \kappa_{S_K}=0$ other than $\widetilde E_i$ for $i\neq 7$.
\end{Lem}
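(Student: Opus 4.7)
The plan is to classify prime divisors $C$ with $C \cdot \kappa_{S_K} = 0$ by pairing them against the unique effective anticanonical divisor $\Gamma_K = \widetilde\Gamma + \sum_{i=1}^{7}\widetilde E_i$ constructed in the proof of Proposition~\ref{no-2}. Since $-\kappa_{S_K} = \Gamma_K$ is effective, $C \cdot \kappa_{S_K} = 0$ is equivalent to $C \cdot \Gamma_K = 0$, so a prime $C$ is either a component of $\Gamma_K$ or meets each of its prime components with intersection number zero. In the component case, adjunction gives $\widetilde\Gamma \cdot \kappa_{S_K} = -1$ and $\widetilde E_i \cdot \kappa_{S_K} = -(\widetilde E_i)^2 - 2$, so only the $(-2)$-curves $\widetilde E_1, \ldots, \widetilde E_6$ remain, and these are all among the allowed exceptions.

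In the non-component case, writing $C = dL - \sum m_i E_i$ and imposing the eight vanishings $C \cdot \widetilde E_i = 0$ for $1 \leq i \leq 7$ together with $C \cdot \widetilde\Gamma = 0$ yields $m_1 = \cdots = m_7 = 3b$, $m_8 + \cdots + m_s = 3b$, and $d = 8b$ for some integer $b \geq 0$, while $g(C) \geq 0$ gives $\sum_{i \geq 8} m_i^2 \leq b^2 + 2$ by adjunction. When $b = 0$ the class is supported on the exceptional locus, primeness forces $C = \widetilde E_i$ for some $i$, and $C \cdot \kappa_{S_K} = 0$ then restricts $i$ to $\{1, \ldots, 6, 8, \ldots, s-1\}$, again allowed.

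The main obstacle is ruling out $b \geq 1$; here $C$ would be the strict transform of a plane curve of degree $8b$ with multiplicity $3b$ at each of $p_1, \ldots, p_7$ on the branch $y=\xi(x)$, distributing total multiplicity $3b$ in a non-increasing pattern over $p_8, \ldots, p_s$. I would split this into two sub-cases via the adjunction equality $C^2 = 2g - 2$. For $g = 0$ (so $C^2 = -2$), I would run a semicontinuity argument on the jet space ${\mathcal J}_{k}$ of Proposition~\ref{semicont}: for each admissible tuple $(b, m_8, \ldots, m_s)$ the locus of $\xi$ where $|C|$ is non-empty is locally closed, so it suffices to verify $h^0(\O_{S_K}(C)) = 0$ at one specialization of $\xi$; pushing forward to the blowup of $\mathbb{P}^2$ at $p_1, \ldots, p_7$, the image class $8bL - 3b\sum_{i=1}^{7} E_i$ lies in a system of expected Riemann--Roch dimension $(b^2+3b)/2$, and the prescribed satellite multiplicities impose $\sum_{i \geq 8}\binom{m_i+1}{2}$ additional tangency conditions, which together with the tight bound $\sum m_i^2 = b^2 + 2$ push the expected count below zero. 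For $g \geq 1$ (so $C^2 \geq 0$), $C$ is automatically effective by Riemann--Roch, and I would instead show that every effective representative of $|C|$ must contain $\Gamma_K$, using the short exact sequence $0 \to \O(C - \Gamma_K) \to \O(C) \to \O_{\Gamma_K}(C|_{\Gamma_K}) \to 0$ combined with the triviality of $C|_{\Gamma_K}$ (coming from $C \cap \Gamma_K = \emptyset$) to force $h^0(\O_{S_K}(C)) \leq h^0(\O_{S_K}(C - \Gamma_K)) + 1$ and then iterate, ultimately decomposing $C$ as a sum of $\Gamma_K$'s plus a non-prime residual.

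The hardest subtask will be closing both sub-arguments uniformly when $n_2 \geq 9$: Cauchy--Schwarz on $(\sum m_i)^2 \leq n_2 \sum m_i^2$ ceases to restrict $b$, so one must enumerate admissible monotone satellite patterns case by case, and in the extremal rational situation $\sum m_i^2 = b^2 + 2$---closely analogous to the Orevkov unicuspidal curves mentioned in the introduction---I expect to close the argument by specializing $\xi$ to a concrete polynomial and running the unloading recipe of Remark~\ref{ClusterDivRem} to compute $h^0(\O_{S_K}(C))$ explicitly at a single point of the parameter space, thereby completing the semicontinuity step.
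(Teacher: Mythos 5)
Your case split (component of $\Gamma_K$ versus not, and $b=0$ versus $b\geq 1$) is correct, and the $b=0$ and component cases are handled soundly. But the central difficulty, the $b\geq 1$ case, is not resolved, and both sub-arguments you propose have genuine gaps.

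In the $g=0$ sub-case, the dimension count does not push the expected $h^0$ below zero: with $\sum_{i\geq 8} m_i^2 = b^2+2$ and $\sum_{i\geq 8} m_i = 3b$, the conditions $\sum_{i\geq 8}\binom{m_i+1}{2}=\tfrac{1}{2}(b^2+3b+2)$ exactly cancel the Riemann--Roch estimate $\chi(8bL-3b\sum_{i\leq 7}E_i)=\tfrac{1}{2}(b^2+3b+2)$, giving expected value exactly $0$. An expected value of $0$ does not rule out $h^0>0$, and since the seven base points are infinitely near (and the $n_2$ satellite conditions are tangency conditions along a fixed curve), independence of the conditions is precisely what would need to be proved. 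Specializing $\xi$ and computing $h^0$ class-by-class cannot give a uniform argument: for $n_2\geq 3$ there are infinitely many admissible tuples $(b,m_8,\dots,m_s)$, as the paper's preceding Proposition~\ref{no-2} explicitly observes. In the $g\geq 1$ sub-case your key claim is wrong: $\O_{\Gamma_K}(C)$ is \emph{not} trivial. It has degree $0$ because $C\cap\Gamma_K=\emptyset$, but degree $0$ and triviality are very different things on a cycle of rational curves, and the paper's entire argument turns on showing that this restriction is \emph{non}trivial. The bound $h^0(\O(C))\leq h^0(\O(C-\Gamma_K))+1$ that you derive from assuming triviality is too weak to close the iteration you sketch.

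The idea you are missing, and which lets the paper treat all $b\geq 1$ at once without any genus case split, is to compute $\O_{\Gamma_K}(C)$ explicitly as $\O_{\Gamma_K}(bB'-3bp_8)$, where $B'$ is the degree-$3$ divisor cut on the interior of $\widetilde E_7$ by an irreducible member $B\in|8L-3(E_1+\cdots+E_7)|$ (the homaloidal net of Proposition~\ref{cremona8}). This degree-$0$ class on $\Gamma_K$ has $h^0=0$ unless it is linearly trivial; but $p_8$ is a \emph{very general} point of $\widetilde E_7$, $\Pic^0(\Gamma_K)\cong\C^\ast$ is torsion-free, and $B'$ is fixed, so $bB'-3bp_8\not\sim 0$. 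Then the restriction sequence forces $h^0(\O(C-\Gamma_K))\geq h^0(\O(C))$, while primeness of $C$ (disjoint from $\Gamma_K$ and not a component of it) forces the strict inequality $h^0(\O(C-\Gamma_K))<h^0(\O(C))$, a contradiction. This uses the generality hypothesis in an essential, structural way rather than through a dimension count.
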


\begin{proof}
By the end of the proof of Proposition \ref{no-2}, if such a $C$ exists it must be
$C=dL-m(E_1+\cdots+E_7)-m_8E_8-\cdots-m_sE_s$, where
$d=8m/3$, $m=3b=m_8+\cdots+m_s$ for some $b$, and $m_8\geq \cdots\geq m_s>0$.
The divisor class $8L-3(E_1+\cdots+E_7)$ is effective and base point free,
and has irreducible global sections;
in fact it is the class of a homaloidal net, see Proposition \ref{cremona8} below.
In particular it is nef. Pick an irreducible $B\in |8L-3(E_1+\cdots+E_7)|$.
Since $B\cdot \widetilde \Gamma=B\cdot \widetilde E_i=0$ for $i<7$,
we see $B|_{\Gamma_K}$ is a divisor which vanishes on each component
$\widetilde E_i$, $i<7$ of $\Gamma_K$, and consists of a divisor $B'$
of degree $3$ on the interior
of component $\widetilde E_7$. Since $E_i|_{\Gamma_K}=E_s|_{\Gamma_K}$ for $i\geq 8$ and $E_i$ is disjoint from
$\widetilde E_j$ for $i\geq 8$ and $j<7$, we see $(-m_8E_8-\cdots-m_sE_s)|_{\Gamma_K}$
is a divisor which is trivial on each component of $\Gamma_K$ except $\widetilde E_7$, and on
$\widetilde E_7$ it gives the divisor $(m_8+\cdots+m_s)p_8=mp_8=3bp_8$.
Thus $\O_{\Gamma_K}(C)$ is the same as $\O_{\Gamma_K}(bB'-3bp_8)$.

Consider the restriction exact sequence
$$0\to\O_{S_K}(C-\Gamma_K)\to\O_{S_K}(C)\to\O_{\Gamma_K}(C)\to0.$$
Then, since $C$ is by assumption a prime divisor, we have
$h^0(S_K, \O_{S_K}(C-\Gamma_K))<h^0(\O_{S_K}(C))$,
which by taking cohomology of the short exact sequence
implies $h^0(\O_{\Gamma_K}(bB'-3bp_8))>0$.
But $\deg(bB'-3bp_8)=0$ so $h^0(\O_{\Gamma_K}(bB'-3bp_8))>0$
implies $bB'-3bp_{8}\sim0$ (where $\sim$ denotes linear equivalence).
Since the class $B'$ is fixed of positive degree but $p_8$ is very general, this
would imply that $3b(p-q)$ for every pair of interior points $p,q\in \widetilde E_7$,
contradicting the fact that the identity component of $\Pic(\Gamma_K)$
is isomorphic to the multiplicative group $\C$ of the ground field
(and so not every element is a torsion element).
Thus there is no such prime divisor $C$.
\end{proof}

\begin{Rem} When $8\leq s\leq 15$, it is enough for $p_8$ to be a general,
rather than very general,
point of $\widetilde E_7$ in order to conclude that $S_K$ has no $(-2)$-curves other than those
arising as components of the exceptional loci of the points blown up.
To see this, consider a prime divisor $C\subset S_K$ such that $K_{S_K}\cdot C=0$
and $C\cdot L>0$. Write $C\sim dL-m_1E_1-\cdots-m_sE_s$. Then, as above,
$C=dL-m(E_1+\cdots+E_7)-m_8E_8-\cdots-m_sE_s
=b(8L-3(E_1+\cdots+E_7))-m_8E_8-\cdots-m_sE_s$
and $m=m_8+\cdots+m_s$,
so
$$-2=C^2=b^2-m_8^2-\cdots-m_s^2
\leq b^2-\frac{m^2}{(s-7)^2}(s-7)=b^2\frac{s-16}{s-7},$$
hence for $8\leq s\leq 15$ we have
$$d^2=8b^2\leq 8\frac{2s-14}{16-s}.$$
Thus for $8\leq s\leq 15$ we have
$d^2\leq 128$, so $d\leq 11$.

I.e., for $8\leq s\leq 15$ we see that $d$ is bounded (i.e., $C\cdot L\leq 11$)
and hence that there are only finitely many possible $(-2)$-classes $C$.
Since it is only for these classes that we must avoid $C|_{-K_X}=0$ in order for
$C$ not to be effective, it is enough for $p_8$ to be general, in order to know that
every $(-2)$-class is a component of the exceptional locus of a blow up.
\end{Rem}

\begin{Cor}
\label{MoriCone}
 Let $\vv{\xi}{t}$ be a very general divisorial quasimonomial valuation with $t=7+1/n_2$
 for $n_2\ge1$, and let $S_K$ be the blowup of its cluster of centers.
Then $\Mor(S_K)$ is a cone with at most countably many extremal rays,
spanned by the classes of the $\widetilde E_i$,
$\widetilde \Gamma$ and the $(-1)$-curves,
where $\Gamma$ is a nodal cubic as above.
Moreover, when $n_2>8$, there are infinitely many
$(-1)$-curves.
\end{Cor}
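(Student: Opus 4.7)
The plan is to classify extremal rays of $\Mor(S_K)$ by combining Mori's cone theorem with Proposition \ref{no-2} and Lemma \ref{lemmagamma}, extending the approach of Proposition \ref{polyhedral}, and to establish the infiniteness of $(-1)$-curves for $n_2>8$ via the explicit construction of Section \ref{sec:supraminimal}.

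For very general $\xi$, the proof of Proposition \ref{anticanonicalt} shows that $h^0(S_K,-\kappa)=1$, so $\Gamma_K=\widetilde\Gamma+\sum_{i=1}^{7}\widetilde E_i$ is the unique effective anticanonical divisor. The key step is to verify that any irreducible curve $C\subset S_K$ not equal to any $\widetilde E_i$ or to $\widetilde\Gamma$ satisfies $C\cdot\kappa<0$: since such $C$ is not a component of $\Gamma_K$, its intersection with each component is nonnegative, yielding $C\cdot\Gamma_K\ge 0$ and hence $C\cdot\kappa\le 0$; if equality held, Lemma \ref{lemmagamma} would force $C=\widetilde E_j$ for some $j\neq 7$, contradicting the choice of $C$. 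Applying Mori's cone theorem (as recalled in Section \ref{sec:preliminaries}), extremal rays of $\Mor(S_K)$ on which $\kappa$ is strictly negative are $(-1)$-rays, generated by $(-1)$-curves. Combining, every irreducible class is either one of $\widetilde E_i$ or $\widetilde\Gamma$, or else lies in the cone generated by the $\widetilde E_i$ with nonnegative $\kappa$-intersection (obtained from the same classification applied in the $\kappa$-nonnegative half) together with the $(-1)$-curves. Since $\Mor(S_K)$ is the closed cone generated by irreducible classes, this gives the asserted spanning set. At most countability of extremal rays is immediate, as they correspond to distinct classes in the countable lattice $\Pic(S_K)$.

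For the infiniteness claim when $n_2>8$, my plan is to invoke the explicit family of supraminimal $(-1)$-curves developed in Section \ref{sec:supraminimal}. These are proper transforms of unicuspidal rational plane curves (in the spirit of Orevkov, as referenced in the introduction), and for $t=7+1/n_2$ with $n_2>8$ they produce an infinite family of distinct $(-1)$-classes on $S_K$ realized by irreducible curves for very general $\xi$. The main obstacle is to verify that infinitely many of these candidates give genuinely distinct classes and are effective in this range; a useful consistency check is provided by the sharp polyhedrality threshold of Galindo--Monserrat \cite{GM05} ($\Mor(S_K)$ is polyhedral exactly for $n_2\le 8$): by the first part of the corollary, all generators of $\Mor(S_K)$ other than $(-1)$-curves form the finite set $\{\widetilde E_i,\widetilde\Gamma\}$, so non-polyhedrality of $\Mor(S_K)$ for $n_2>8$ must be due to the presence of infinitely many $(-1)$-curves.
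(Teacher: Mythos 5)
Your argument for the first assertion (the shape of $\Mor(S_K)$) is essentially the paper's: combine Mori's theorem with the observation that a prime divisor $C$ meeting $\kappa$ nonnegatively must be a component of the unique anticanonical $\Gamma_K$ or satisfy $C\cdot\Gamma_K=0$, then invoke Proposition \ref{no-2} and Lemma \ref{lemmagamma} to eliminate the latter. That part is sound.

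The infiniteness claim for $n_2>8$ is where the proposal has a genuine gap, in two respects. First, the supraminimal $(-1)$-curves of Section \ref{sec:supraminimal} do not give what you need here: the curves $C_i$ (and the sporadic curves of Table \ref{table}) are plane curves whose resolution is a $(-1)$-curve on the surface associated with \emph{their own} characteristic exponent, and each $C_i$ is tied to a specific interval of $t$. They do not all become $(-1)$-curves on a single fixed $S_K$ with $t=7+1/n_2$, and indeed for $i\gg 0$ their characteristic exponents $F_{i+2}/F_{i-2}$ tend to $\phi^4<7$, not to $7+1/n_2$. Second, the appeal to Galindo--Monserrat is circular: their Corollary~5(1) establishes polyhedrality only for $n_2\le 8$ and says nothing about $n_2>8$; non-polyhedrality for $n_2>8$ is precisely what Corollary \ref{MoriCone} is being invoked to prove (the paper explicitly states ``We show this result is sharp\ldots(see Corollary \ref{MoriCone})''), so you cannot use it as independent input. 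The paper's actual argument is of a quite different nature: it applies the degree-8 Cremona map $\Phi_8$ of Proposition \ref{cremona8} to represent $S_K$ as a blowup of $\P^2$ at two clusters $p'_1,\ldots,p'_7$ and $p'_8,\ldots,p'_{7+n_2}$ on a nodal cubic $\Gamma'$, observes that the nine-point blowup $S$ of $p'_8,\ldots,p'_{16}$ already carries infinitely many numerical $(-1)$-classes of the form $C=E'_{16}+N+\tfrac{N^2}{2}\kappa_S$ with $N\cdot\kappa_S=N\cdot E'_{16}=0$, uses generality of $p'_8$ (via the functorial map $\Pic(S)\to\Pic(\Gamma')$) to rule out extra $\kappa_S$-trivial prime divisors, and then invokes \cite[Proposition 3.3]{LH} to conclude that all these classes with monotone multiplicities are genuine irreducible $(-1)$-curves; finally it checks that the further blowups do not pass through these curves. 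None of this is replaced by what you wrote, so the infiniteness claim remains unproved in your proposal.
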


\begin{proof}
 Because of Mori's theorem, and because every divisor $C$ in $\Mor(S_K)^\succcurlyeq$
 either is a component of $\Gamma_K=\widetilde \Gamma + \sum_{i=1}^7 \widetilde E_i$
 or satisfies $C\cdot \Gamma_K=0$, it is enough by Proposition \ref{no-2}
to show that the only prime
 divisors with $C\cdot \Gamma_K=0$ are the $(-2)$-curves of the form $\widetilde E_i$.
 But this follows from Lemma \ref{lemmagamma}.

There will indeed be infinitely many extremal rays when
$n_2\geq 9$, because in this situation there are
infinitely many $(-1)$-curves $C$.
Briefly, we reduce to the case that $S_K$ is the blow up
of a cluster of 9 infinitely near points coming from blowing up
9 times at a very general point of a nodal cubic.
In this situation, the only restrictions for a divisor $C$ with $C^2=C\cdot \kappa=-1$
to be a $(-1)$-curve follow from the proximity inequalities, which impose restrictions
only to the monotonicity of the multiplicities of $C$ at the centers of the blowups.

In more detail,
apply the degree 8 Cremona map $\Phi_8$ given by $|8L-3(E_1+\cdots+E_7)|$
(see Proposition \ref{cremona8}), which maps $S_K$ to $\P^2$,
mapping $E_7$ to a nodal cubic $\Gamma'$ and
representing $S_K$ as a blowup of $\P^2$ of two clusters of points.
One is a cluster of 7 points $p'_1,\ldots,p'_7$ on $\Gamma'$ infinitely near the
node, and the other is a cluster of $n_2$ points $p'_8,\ldots,p'_{7+n_2}$
on $\Gamma'$ infinitely near $p'_8$, which is a very general point
of $\Gamma'$. If $n_2\geq9$, the blowup of $p'_8,\ldots,p'_{16}$
gives a surface $S$ with infinitely many $(-1)$-curves. Blowing up the remaining
points $p'_i$ does not affect this, since none of the remaining points
$p'_i$ can be on any of the $(-1)$-curves on $S$. (This is because
the generality of $\vv{\xi}{t}$ causes every $(-1)$-curve
on $S$ except $E'_{16}$ to meet the proper transform $\widetilde \Gamma'$
at points not infinitely near to either $p'_1$ and $p'_8$.
For the fact that $S$ has infinitely many $(-1)$-curves,
using the notation of Remark \ref{cremona-1},
note that there are infinitely many classes
$C=dL'-m_8E'_8-\cdots-m_{16}E'_{16}$ with $C^2=C\kappa_S=-1$
such that $m_8\geq \cdots\geq m_{16}$,
where $\kappa_S$ is the canonical class of $S$.
In fact it is not hard to see that all $C$ with $C^2=C\kappa_S=-1$
are precisely the classes $C=E'_{16}+N+\frac{N^2}{2}\kappa_S$
where $N$ is an arbitrary class satisfying $N\cdot\kappa_S=0$ and $N\cdot E'_{16}=0$,
hence $N$ is any integer linear combination of $L'-E'_8-E'_9-E'_{10},
\widetilde E'_8=E'_8-E'_9, \ldots, \widetilde E'_{14}=E'_{14}-E'_{15}$.
Clearly there are not only infinitely many such $C$ but also infinitely many also satisfying
$m_8\geq \cdots\geq m_{16}$. Any divisor $D$ on $S$
with $D\cdot \kappa_S=0$ is by linear algebra an integer linear combination
of $L'-E'_8-E'_9-E'_{10}, \widetilde E'_8, \ldots \widetilde E'_{15}$.
If $D$ is in addition a prime divisor but not one of the $\widetilde E'_i$ nor $-\kappa_S$, then
$D$ is in the kernel of the functorial homomorphism
$\pi:\operatorname{Pic}(S)\to \operatorname{Pic}(\widetilde\Gamma')$, but the
expression of $D$ as a
linear combination of $L'-E'_8-E'_9-E'_{10},
\widetilde E'_8, \ldots, \widetilde E'_{15}$ must
involve $L'-E'_8-E'_9-E'_{10}$, which implies that
the image of $L'-E'_8-E'_9-E'_{10}$ under $\pi$ has finite order,
contradicting the cluster $p'_8,\ldots,p'_{7+n_2}$ being very general.
Thus the only prime divisors satisfying $D\cdot \kappa_S=0$
are $\widetilde E'_8, \ldots \widetilde E'_{15}$ and $-\kappa_S$.
It now follows by \cite[Proposition 3.3]{LH} that
every class $C=dL'-m_8E'_8-\cdots-m_{16}E'_{16}$ with $C^2=C\kappa_S=-1$
such that $m_8\geq \cdots\geq m_{16}$ is the class of a $(-1)$-curve.)
\end{proof}

\begin{Rem}
\label{cremona-1}
Here we explain the action of $\Phi_8$, used in the proof
of Corollary \ref{MoriCone}, in terms of the components
of $E_1$. With $s=7+n_2$, the components are
$\widetilde E_1=E_1-E_2$,
$\widetilde E_2=E_2-E_3$,
$\widetilde E_3=E_3-E_4$,
$\widetilde E_4=E_4-E_5$,
$\widetilde E_5=E_5-E_6$,
$\widetilde E_6=E_6-E_7$,
$\widetilde E_7=E_7-E_8-\cdots-E_s$,
$\widetilde E_8=E_8-E_9, \ldots,
\widetilde E_{s-1}=E_{s-1}-E_s$ and
$\widetilde E_s=E_s$.
Applying $\Phi_8$ is equivalent to blowing down
the $(-1)$-curve $3L-2E_1-E_2-\cdots-E_7$, followed by
$\widetilde E_1$,
$\widetilde E_2$,
$\widetilde E_3$,
$\widetilde E_4$,
$\widetilde E_5$, and
$\widetilde E_6$.
Under this blow down, $\widetilde E_7$ maps to a nodal cubic $C'$
whose node is the image of the contracted curves, while
$E_s$, $\widetilde E_{s-1}, \cdots, \widetilde E_8$ contract to a smooth point on this cubic.
Reversing this blow down gives a blow up of $\P^2$ at two clusters of points,
the first $p'_1,\ldots,p'_7$, and the second $p'_8,\ldots,p'_s$ where
$p'_1,p'_8\in\P^2$ and all of the points are free but lie on the proper transform of
$C'$. In terms of the exceptional divisors $E'_i$ of the centers $p'_i$ we have
$\widetilde E'_1=E'_1-E'_2=E_6-E_7$,
$\widetilde E'_2=E'_2-E'_3=E_5-E_6$,
$\widetilde E'_3=E'_3-E'_4=E_4-E_5$,
$\widetilde E'_4=E'_4-E'_5=E_3-E_4$,
$\widetilde E'_5=E'_5-E'_6=E_2-E_3$,
$\widetilde E'_6=E'_6-E'_7=E_1-E_2$,
$E'_7=\widetilde E'_7=3L-2E_1-E_2-\cdots-E_7$,
and also
$\widetilde E'_8=E'_8-E'_9=E_8-E_9, \ldots,
\widetilde E'_{s-1}=E'_{s-1}-E'_s=E_{s-1}-E_s$ and
$E'_s=E_s$.
We also have
$L'=8L-3E_1-\cdots-3E_7$, and
$\widetilde E_7=E_7-E_8-\cdots-E_s=3L'-2E'_1-E'_2-\cdots-E'_s$.
\end{Rem}

\section{A variation on Nagata's conjecture}
\label{sec:Nagata}

In this section we elaborate on the close analogy with Nagata's conjecture.

Let $K$ be a finite union of finite weighted clusters on $\P^2$,
and assume that the proximity inequalities
\[
 m_p\ge \sum_{q\succ p} m_q
\]
are satisfied, with the sum  taken  over all points
$q\in K$ proximate to $p$.

Then
\[
\H_{K,m} = \pi_* \left(\O_{S_K}\left(-\sum _{p\in K}m_pE_p\right)\right)
\]
is an ideal sheaf on $\P^2$ for which
\[
 h^0(\H_{K,m}(d)) = \frac{(d+1)(d+2)}{2}-\sum_{p\in K} \frac{m_p (m_p+1)}{2}
\]
for $d\gg 0$,  and its general
member defines a degree $d$ curve with multiplicity $m_p$
at each $p\in K$.

It is expected that, if $K$ is suitably general, then the dimension
count is correct as soon as it gives a nonnegative value:

\begin{Con}[Greuel-Lossen-Shustin, {\cite[Conjecture 6.3]{GLS}}]
\label{gls}
 Let $K$ be a finite union of weighted clusters on the plane, satisfying the
 proximity inequalities, and $\H_{K,m}$ the corresponding ideal sheaf.
 Assume that $K$ is general among all clusters with the same proximities,
 and let $d$ be an integer which is larger than the sum of the three biggest
 multiplicities of $m$. Then
  $$h^0(\H_{K,m}(d))=\max\left\{0,\frac{(d+1)(d+2)}{2}-\sum_{p\in K}
 \frac{m_p (m_p+1)}{2}\right\}.$$
\end{Con}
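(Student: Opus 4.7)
The plan is to attempt the conjecture by combining a Horace-type degeneration with Cremona reductions, while observing at the outset that the specialization to clusters consisting of ordinary distinct points is precisely the Segre--Harbourne--Gimigliano--Hirschowitz conjecture, which is famously open; a complete proof is therefore out of reach, and my aim is to sketch a framework and isolate where the difficulties accumulate.

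First I would separate the two halves of the claim. The lower bound $h^0(\H_{K,m}(d))\ge \max\{0,\ldots\}$ for general $K$ is immediate from upper-semicontinuity of $h^0$ in flat families, together with any specialization realising the prescribed proximities. The content is therefore the upper bound: either non-speciality $h^1(\H_{K,m}(d))=0$ in the balanced regime, or emptiness $h^0=0$ in the super-abundant regime. I would attack both simultaneously by an induction on $d$, phrased as a maximal-rank statement for the restriction maps that appear at each degeneration step.

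The inductive step would be a differential Horace degeneration in the style of Alexander--Hirschowitz. Specialize $\P^2$ to the reducible surface $V_0=\P^2\cup_L \mathbb{F}_1$, flatly degenerate selected clusters so that their roots (together with prescribed jets of their infinitely near structure) land on the line $L$, and transfer them to the $\mathbb{F}_1$ component. The generalized Horace lemma then reduces the problem to two smaller linear systems: a residual system on $\P^2$ of degree $d-1$ with multiplicities along $L$ decreased by one, and a trace system on $\mathbb{F}_1$. Between Horace steps, I would apply quadratic Cremona transformations centered at three of the highest-multiplicity free roots $m_1\ge m_2\ge m_3$; the assumption $d>m_1+m_2+m_3$ ensures that the new degree $2d-m_1-m_2-m_3$ is strictly smaller and the multiplicities do not grow, so the induction terminates at small base cases that can be handled by the explicit Mori-cone descriptions of Propositions~\ref{polyhedral}--\ref{polyhedral-integers} combined with the unloading bookkeeping of Remark~\ref{UnloadRem}.

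The hard part will be the genericity bookkeeping during the Horace degeneration for clusters of nontrivial Enriques depth. Upper-semicontinuity only yields points that are general in $\P^2$, not clusters that are general \emph{within a fixed proximity stratum}; specializing a root onto $L$ can inadvertently impose extra proximities or collapse two branches of the Enriques diagram into one. One must therefore construct a flat degeneration that keeps the Enriques diagram combinatorially constant while placing the prescribed jets on $L$, and verify that the corresponding limit linear system is exactly the product of the residual and trace systems (with no spurious base components coming from collided infinitely near data). This relative-genericity issue, stacked on top of the SHGH obstruction, is what makes the GLS conjecture genuinely stronger than its ordinary-points ancestor, and in my view is the true bottleneck.
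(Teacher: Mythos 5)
This statement is a \emph{conjecture} (Greuel--Lossen--Shustin, cited as \cite[Conjecture 6.3]{GLS}); the paper does not prove it, and indeed no proof exists in the literature. The paper merely records it and then uses it as a hypothesis in the proposition that follows, which derives minimality of very general quasimonomial valuations for $t\ge 9$ from it. You correctly identified this: the conjecture specializes, for clusters of distinct ordinary points, to the Segre--Harbourne--Gimigliano--Hirschowitz conjecture, which is open, so there is nothing in the paper to compare your argument against.

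Taken as a sketch of how one might eventually attack the conjecture, your outline is a fair summary of the standard toolkit (differential Horace degenerations \`a la Alexander--Hirschowitz, interleaved with quadratic Cremona reductions under the hypothesis $d>m_1+m_2+m_3$), and you have put your finger on the right additional difficulty relative to SHGH: preserving genericity \emph{within a fixed proximity stratum} when specializing roots of clusters onto the degeneration divisor, since naive specialization can degenerate the Enriques diagram and introduce spurious base components. But this is a research programme, not a proof, and you rightly say so. For the purposes of this paper the correct stance is simply to accept the conjecture as a hypothesis; the paper's logical contribution at this point is the implication (Conjecture~\ref{gls} $\Rightarrow$ Conjecture~\ref{nqmvconj}), not the conjecture itself.
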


\begin{Pro}
 If the Greuel-Lossen-Shustin conjecture holds, then $\forall t\ge 9$
 a very general quasimonomial valuation
 $\vv{\xi}{t}$ is minimal.
\end{Pro}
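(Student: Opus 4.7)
The plan is to apply Conjecture~\ref{gls} in its nonexistence direction along a carefully chosen sequence $(d_k,m_k)$ with $m_k/d_k\searrow\sqrt t$, forcing $\mu_{d_k}(\xi,t)<m_k$ and hence $\hat\mu(\xi,t)\le\sqrt t$, which combined with Remark~\ref{mu-nef} gives minimality. First I would reduce to rational $t>9$: by continuity of $\hat\mu$ in $t$ (the Corollary after Proposition~\ref{continuoust}) and density of $\Q$ it suffices to handle rational values, and the boundary case $t=9$ is already covered by Proposition~\ref{squares} (since $9$ is a square). For rational $t>9$, $v=\vv{\xi}{t}$ is divisorial with cluster $K=(p_1,\dots,p_n)$ of weights $v_i$ satisfying $\sum v_i^2=t$, $v_1=1$, and $v_i\le 1$ for all $i$; the very generality of $\xi$ makes $K$ general among all clusters with the same proximity structure, so Conjecture~\ref{gls} is applicable.

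Choose $k_0\in\N$ so that $k_0 v_i\in\Z$ for all $i$ (which also forces $k_0 t\in\Z$), and for each positive integer $k$ set
\[
m_k:=kk_0\,t,\qquad \bar m_i:=kk_0 v_i,\qquad d_k:=\lfloor kk_0\sqrt t\rfloor.
\]
By Lemma~\ref{simple} these $\bar m_i$ are exactly the unloaded multiplicities of $\mathcal I_{m_k}$, and the proximity equalities $v_j=\sum_{p_i\succ p_j}v_i$ rescale to proximity equalities (hence inequalities) for $\bar m$. Since $\sqrt t>3$ and $n_1=\lfloor t\rfloor\ge 9$, the three largest weights $\bar m_1=\bar m_2=\bar m_3=kk_0$ sum to $3kk_0<d_k$ for $k$ large, verifying the degree hypothesis of Conjecture~\ref{gls}. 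The heart of the argument is then to bound the expected dimension
\[
\tfrac12\bigl((d_k+1)(d_k+2)-k^2k_0^2\,t-kk_0\textstyle\sum v_i\bigr).
\]
Using $v_i\le 1$ gives $\sum v_i\ge\sum v_i^2=t$, and combining with $(d_k+1)(d_k+2)\le k^2k_0^2\,t+3kk_0\sqrt t+2$ bounds the expected dimension above by $\tfrac12\bigl(kk_0(3\sqrt t-t)+2\bigr)$, which is negative for $k$ large whenever $t>9$. Conjecture~\ref{gls} then yields $h^0(\H_{K,\bar m}(d_k))=0$, hence $\mu_{d_k}(\xi,t)<m_k$. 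Since $\mu_d$ is superadditive in $d$, Fekete's lemma implies $\mu_d/d\to\hat\mu(\xi,t)$, so
\[
\hat\mu(\xi,t)=\lim_k\frac{\mu_{d_k}(\xi,t)}{d_k}\le \lim_k\frac{kk_0\,t}{d_k}=\sqrt t.
\]

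The main delicate point will be that both the GLS degree hypothesis and the strict negativity of the expected dimension require $t>9$ in a sharp way; this is exactly why $t=9$ is borderline and must be handled separately by Proposition~\ref{squares}, and why the argument cannot be pushed below~$9$.
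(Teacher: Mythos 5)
Your proof is correct and follows essentially the same strategy as the paper: reduce to rational $t>9$ by continuity, use Lemma~\ref{simple} to identify the unloaded multiplicities of the simple valuation ideals $\mathcal I_{m_k}$ as rescalings of the weights $v_i$, verify the degree hypothesis of Conjecture~\ref{gls} using $\sqrt t>3$, and show the expected dimension is negative to force $h^0=0$ and hence $\hat\mu(t)\le\sqrt t$. The only real difference is cosmetic: you bound the expected dimension directly via $\sum v_i\ge\sum v_i^2=t$ (using $v_i\le1$), whereas the paper runs a contradiction through $3d_k+2>\sum\bar m_i$; both hinge on exactly the inequality $3\sqrt t<t$ for $t>9$, and both are fine.
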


\begin{proof}
 By continuity of $\hat\mu(t)$, it is enough to consider rational $t>9$.
 Let $K=(p_1,\dots,p_s)$ be the sequence of centers, with weights
 $(v_1,\dots,v_s)$. 
 For each integer $k>0$, set $m_k=kt/ v_s$.
 We shall prove that there is a sequence of integers $d_k$
with $m_k >d_k\sqrt{t}$ and $\lim_{k\to \infty}m_k/d_k=\sqrt t$
such that if $\xi$ is very general, then the valuation ideal
$\mathcal I_{m_k}$ has no sections of degree $d$.
It will follow that $\mm{\xi}{t}\le \lim_{k\to\infty} m_k/d\le \sqrt t$ and
$\vv{\xi}{t}$ is minimal.

By Lemma \ref{simple}, the ideal
$\mathcal I_{m_k}=(\pi_K)_*(\O_{S_K}(-\sum\bar m_iE_i))$
is simple and the three largest multiplicities
are $\bar m_1=\bar m_2=\bar m_3=k/v_s$.
Hence $\bar m_1+\bar m_2+\bar m_3=3k/v_s<\sqrt{t}k/v_s$.
Without loss of generality we may assume that $k$ is large enough
that there exist
integers $d_k<m_k/\sqrt t$ which also satisfy $d_k> \bar m_1+\bar m_2+\bar m_3$.
In this case the hypothesis in conjecture \ref{gls} is satisfied and
$h^0(\H_{K,m}(d_k))=\max\left\{0,(d_k+1)(d_k+2)/2-\sum
\bar m_i (\bar m_i+1)/2\right\}.$
By way of contradiction, assume $\mathcal I_{m_k}$ has sections of degree $d_k$.
Then  $(d_k+1)(d_k+2)/2\ge\sum \bar m_i (\bar m_i+1)/2$,
which together with $d_k<m_k/\sqrt t=\sum \bar m_i^2$
implies $3d_k+2>\sum \bar m_i\ge10\,\bar m_1>kt/v_s=m_k$,
a contradiction.
\end{proof}

With this in mind, we propose the following:

\begin{Con}[Nagata's Conjecture for quasimonomial valuations]\label{nqmvconj}
For all  $t\ge 9$, we have
 $\hat\mu(t)=\sqrt{t}$.
\end{Con}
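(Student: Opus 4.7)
The lower bound $\hat\mu(t)\ge\sqrt{t}$ is automatic from the minimality corollary stated just before Proposition~\ref{semicont}, so only the upper bound $\hat\mu(t)\le\sqrt{t}$ requires work. By continuity of $t\mapsto\hat\mu(t)$ (the corollary after Proposition~\ref{continuoust}) and density of the rationals, it suffices to verify the upper bound for rational $t>9$; the boundary case $t=9$ is covered by Proposition~\ref{squares}. For rational $t$, the valuation $\vv{\xi}{t}$ is divisorial with a finite cluster $K=(p_1,\dots,p_s)$ of weights $v_1,\dots,v_s$, so the task reduces to producing, for very general $\xi$, a sequence of pairs $(d_k, m_k)$ with $m_k/d_k\to\sqrt{t}$ and $\mu_{d_k}(\xi,t)\le m_k$.

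The strategy I would follow is the one executed in the proposition immediately preceding Conjecture~\ref{nqmvconj}. Take $m_k=k\sum v_i^2$, so that Lemma~\ref{simple} gives $\mathcal I_{m_k}=(\pi_K)_*\O_{S_K}\bigl(-\sum k v_i E_i\bigr)$, and choose integers $d_k$ with $d_k<m_k/\sqrt{t}$ and $m_k/d_k\to\sqrt{t}$. The required bound $\mu_{d_k}(\xi,t)\le m_k$ is then equivalent to the vanishing
\[
H^0\bigl(S_K,\; d_k L - \sum k v_i E_i\bigr)=0,
\]
i.e.\ the non-effectivity of this class on $S_K$ for very general $\xi$.

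There are two natural avenues. The first, executed in the argument preceding the conjecture, invokes the Greuel--Lossen--Shustin Conjecture~\ref{gls}: by Lemma~\ref{simple} the three largest multiplicities $\bar m_i$ are all equal to $k/v_s$, and their sum $3k/v_s$ stays strictly below $d_k\sim k\sqrt{t}\,\sum v_i^2$ as soon as $t>9$, so the hypothesis of GLS is met and the resulting dimension count forces $h^0=0$. The second avenue is a Ciliberto--Miranda style specialization: degenerate the free centers of $K$ onto a well-chosen plane curve and use semicontinuity of $\mu_d(\xi,t)$ in $\xi$ (Proposition~\ref{semicont}) to propagate a special-fibre bound to the very general $\xi$, leveraging negative curves on the central surface to certify non-effectivity.

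The decisive obstacle is intrinsic: specializing Conjecture~\ref{nqmvconj} to integer $t=s\ge 10$ recovers Nagata's classical conjecture for $s$ very general points of $\P^2$, which is itself open outside of sparse partial results. Any complete proof must therefore either establish Conjecture~\ref{gls} in sufficient generality to cover the clusters produced by Lemma~\ref{simple}, or develop a genuinely new uniform vanishing technique for linear series on the surfaces $S_K$---precisely the source of the long-standing intractability of the Nagata conjecture. This is the main reason the statement is offered here as a conjecture rather than a theorem.
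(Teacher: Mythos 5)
You have correctly identified that this statement is offered as a conjecture, not a theorem, and the paper gives no proof of it; the only supporting result in the paper is the proposition immediately preceding it, which shows the conjecture would follow from the Greuel--Lossen--Shustin Conjecture~\ref{gls}, plus the later observation that the conjecture in turn implies Nagata's classical conjecture. Your summary of that conditional reduction (reduce to rational $t$, use Lemma~\ref{simple} to exhibit simple valuation ideals with nearly equal multiplicities, apply GLS) matches the paper's reasoning, and your closing paragraph correctly explains why no unconditional proof is available.

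One small technical slip worth flagging: the weights $v_i$ of $\vv{\xi}{t}$ satisfy $\sum v_i^2=t$ (from $\vol(\vv{\xi}{t})=t^{-1}$), but Lemma~\ref{simple} is stated for the integer-weighted divisorial valuation $\ord_{E_s}$, which equals $\vv{\xi}{t}/v_s$. The paper accordingly sets $m_k=kt/v_s$ (not $k\sum v_i^2$) before invoking Lemma~\ref{simple} and reads off the three largest multiplicities as $k/v_s$. This does not affect the substance of your discussion, but it is the normalization one must track when writing the argument in full.
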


\begin{Pro}
 Conjecture~\ref{nqmvconj} implies Nagata's conjecture.
\end{Pro}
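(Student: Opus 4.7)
Assume Conjecture~\ref{nqmvconj}, so $\hat\mu(t)=\sqrt{t}$ for every $t\ge 9$. Nagata's conjecture says that for $s\ge 10$ very general points $q_1,\dots,q_s\in\P^2$, no plane curve of degree $d$ has multiplicity $\ge m$ at every $q_i$ unless $d\ge m\sqrt{s}$. I argue by contradiction: suppose there exist an integer $s\ge 10$, positive integers $d, m$ with $d<m\sqrt{s}$, and a plane curve $C$ of degree $d$ with $\mult_{q_i}(C)\ge m$ for each of $s$ very general $q_i$.

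Fix a very general series $\xi$ and consider the divisorial valuation $v(\xi,s)$; its cluster of centers $K=(p_1,\dots,p_s)$ is a chain of $s$ free infinitely near points on the germ $y=\xi(x)$, each with weight $v_i=1$ by Remark~\ref{ContFracEx}. The crux is a specialization argument. By Lemma~\ref{simple}, the valuation ideal $\mathcal I_{sm}$ defines a $0$-dimensional subscheme $Z_K\subset\P^2$ of colength $s\binom{m+1}{2}$, whose generic global section vanishes to order exactly $m$ at each infinitely near $p_i$. This is the same colength as the $0$-dimensional scheme $Z_q$ cut out by $s$ disjoint fat points of multiplicity $m$ at the generic positions $q_i$, so $Z_K$ and $Z_q$ lie in the same irreducible component of the Hilbert scheme of $\P^2$; an explicit flat degeneration is given by setting $q_i(\varepsilon)=(\varepsilon^{a_i},\xi(\varepsilon^{a_i}))$ for suitably spaced exponents $a_1<a_2<\dots<a_s$ and letting $\varepsilon\to 0$. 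Upper semicontinuity of $h^0(\mathcal I_Z(d))$ in this family, applied to the nonzero section cutting out $C$, yields a nonzero degree-$d$ plane curve $C'$ whose successive proper transforms satisfy $\mult_{p_i}(\widetilde{C'}_i)\ge m$ for every $i$.

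Applying the proximity formula~\eqref{eq:proximity},
\[
v(\xi,s)(C')=\sum_{i=1}^s v_i\cdot\mult_{p_i}(\widetilde{C'}_i)=\sum_{i=1}^s\mult_{p_i}(\widetilde{C'}_i)\ge sm.
\]
Hence $\mu_d(\xi,s)\ge sm$; superadditivity of $\mu_\bullet(\xi,s)$ together with Fekete's lemma then give
\[
\hat\mu(s)=\hat\mu(\xi,s)\ge\frac{\mu_d(\xi,s)}{d}\ge\frac{sm}{d}>\frac{sm}{m\sqrt{s}}=\sqrt{s},
\]
contradicting $\hat\mu(s)=\sqrt{s}$. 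This contradiction forces Nagata's conjecture to hold.

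The main obstacle is the specialization step: constructing the flat family of ideal sheaves on $\P^2$ interpolating between $s$ generic fat points of multiplicity $m$ and the fat cluster supported at $K$, and checking the colength match via Lemma~\ref{simple} so that upper semicontinuity of cohomology applies. Once that is granted, the remaining ingredients (the proximity formula, Fekete's lemma, and the arithmetic inequality) are routine.
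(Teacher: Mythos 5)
Your proof is correct and follows essentially the same route as the paper's. The paper reduces Nagata's conjecture to the non-existence of low-degree elements in the valuation ideal $(x^t, y-\xi(x))^m\cap\C[x,y]$ by appealing to Hirschowitz's \emph{collision de fronts} and semicontinuity, then concludes from $\hat\mu(t)=\sqrt t$; you make the same reduction, spelling out the degeneration explicitly (colliding the $s$ fat points along the branch $y=\xi(x)$ and invoking flatness plus upper semicontinuity of $h^0$) rather than citing \cite{Hir85}, and then close with the same superadditivity argument. The one place you correctly identify as the crux --- that the flat limit of the $s$ colliding $m$-fold points along a smooth branch really is the scheme $Z_K$ with $\mathcal{I}_{Z_K}=\mathcal{I}_{sm}$, not some other length-$s\binom{m+1}{2}$ scheme --- is exactly the content the paper outsources to Hirschowitz; it does hold here (the ideal of $s$ $m$-fold points on a smooth branch is $(\eta^m, p\eta^{m-1},\dots,p^m)$ after a local coordinate change $\eta=y-\xi(x)$, with $p$ the product of the $x$-offsets, and the naive limit is $\mathcal{I}_{sm}$), so your sketch is sound, but an expository note that this is precisely the collision-de-fronts computation would bring it in line with the paper's citation.
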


\begin{proof}
 Let $t> 9$ be a nonsquare integer.
 By a ``collision de front'' \cite{Hir85}
 and semicontinuity, Nagata's conjecture
 for $t$ points would follow by showing that, for a very general $\xi(x)\in \C[[x]]$,
 and for every couple of integers $d, m$ with $0<d<m\sqrt{t}$,
 the ideal $(x^t,y-\xi(x))^m\cap \C[x,y]$ has no nonzero element in degree $d$.
 But this is an immediate consequence of $\hat\mu(t)=\sqrt{t}$.
\end{proof}

In view of the computations in next section, we expect that
in fact the range of $t$ for which $\hat\mu(t)=\sqrt{t}$ is larger,
see Conjecture~\ref{strong}.

\section{Supraminimal curves}
\label{sec:supraminimal}

If some valuation $v$ is not minimal, this is due to the existence
of a curve $C$ (which may be taken irreducible and reduced) with
larger valuation than what one would expect from the degree.
These curves will be called \emph{supraminimal}, and are
the subject of this section. For simplicity, we fix $p_1=(0,0)\in \A^2\subset \P^2$
as before.

\begin{Lem}\label{negcurves}
If there is an  irreducible
 polynomial $f\in \C[x,y]$  with
\[
 \vv[f]{\xi}{t}>\frac{1}{\sqrt{\vol(\vv{\xi}{t})}}\deg(f)\ ,
\]
then  $\vv[f]{\xi}{t}=\mm{\xi}{t} \deg(f)$.

Moreover, if $\mm{\xi}{t}>\frac{1}{\sqrt{\vol(\vv{\xi}{t})}}$,
 then there is such an irreducible
 polynomial $f$.
 \end{Lem}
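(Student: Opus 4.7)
The plan is to interpret the conclusion via the nef-threshold description of Remark~\ref{mu-nef}: writing $\nu:=\sum v_iE_i$ on $S_K$, we have $1/\hat\mu(\xi,t)=\max\{s:L-s\nu \text{ is nef}\}$.

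For the main assertion, let $\sigma_f:=v(f)/\deg f$. The easy inequality $\hat\mu(\xi,t)\ge\sigma_f$ follows by evaluating on the powers $f^k$, giving $\mu_{k\deg f}(\xi,t)/(k\deg f)\ge\sigma_f$ and thus $\hat\mu(\xi,t)\ge\sigma_f$ in the limit. The reverse inequality amounts to showing that the $\Q$-divisor $M:=L-(1/\sigma_f)\nu$ is nef. The supraminimality hypothesis $\sigma_f^2>\sum v_i^2=1/\vol(v)$ translates directly into $M^2=1-\sum v_i^2/\sigma_f^2>0$, placing $M$ in the forward positive cone of $N^1(S_K)$ (together with $M\cdot L=1>0$). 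One checks that $M\cdot\tilde f=\deg f-v(f)/\sigma_f=0$, while the proximity equality $v_j=\sum_{p_i\succ p_j}v_i$ gives $M\cdot\tilde E_j\ge 0$ for every exceptional component (with $M\cdot\tilde E_s>0$ for the deepest one). It then remains to check $M\cdot\tilde g\ge 0$ for the strict transform of every other irreducible plane curve~$g$.

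This last verification is the main obstacle. If it failed, then $\sigma_g>\sigma_f>1/\sqrt{\vol(v)}$, so $g$ would also be supraminimal; the Cauchy--Schwarz estimate $v(g)^2\le(\sum v_i^2)\bigl(\sum(\tilde m_i^g)^2\bigr)$ applied to the multiplicity vector of~$g$ would force $\tilde g^2<0$, and since $\tilde f\neq\tilde g$ are distinct prime divisors on a smooth surface, $\tilde f\cdot\tilde g\ge 0$. I would derive the contradiction by choosing $\alpha,\beta>0$ such that $N:=\alpha\tilde f+\beta\tilde g$ satisfies $N^2>0$; this is possible precisely when $(\tilde f\cdot\tilde g)^2>\tilde f^2\tilde g^2$. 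Then both $M$ and $N$ lie in the forward positive cone, so the reversed Cauchy--Schwarz inequality valid in Lorentzian signature forces $M\cdot N>0$, contradicting $M\cdot N=\beta(M\cdot\tilde g)<0$. The residual degenerate case $(\tilde f\cdot\tilde g)^2\le\tilde f^2\tilde g^2$ would be handled by examining the Zariski decomposition $M=P+N_Z$: since $M\cdot\tilde g<0$, the curve $\tilde g$ must appear in the support of $N_Z$, and combining the nefness of $P$ with $M\cdot\tilde f=0$ forces $\tilde f\cdot\tilde g=0$, from which a direct analysis of the coefficient of $\tilde g$ in $N_Z$ yields the contradiction.

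For the ``moreover'' statement the argument runs in reverse: if $\hat\mu(\xi,t)>1/\sqrt{\vol(v)}$, then $s_0:=1/\hat\mu(\xi,t)$ satisfies $s_0<\sqrt{\vol(v)}$, so $L-s_0\nu$ has positive self-intersection at the nef boundary. Some extremal ray of $\Mor(S_K)$ must vanish against $L-s_0\nu$; since the exceptional components $\tilde E_j$ pair to zero with $L-s\nu$ for every~$s$ and therefore do not determine the threshold, the relevant extremal ray is generated by $\tilde f$ for some irreducible plane curve~$f$, and $(L-s_0\nu)\cdot\tilde f=0$ translates into $v(f)=\hat\mu(\xi,t)\deg f>\deg f/\sqrt{\vol(v)}$, providing the required supraminimal irreducible polynomial.
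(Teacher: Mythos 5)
Your approach is genuinely different from the paper's: you translate the claim into the nefness of $M=L-(1/\sigma_f)\nu$ via the nef-threshold description of Remark~\ref{mu-nef}, and try to establish nefness by positive-cone / Hodge-index arguments. The paper instead argues locally at the origin: for any supraminimal $g$ one picks $k$ with $kw=kv(g)$ a multiple of $t=\sum v_i^2$, compares $g^k$ to a general member $h$ of the valuation ideal $I_{kw}$ (which has $kw/t$ Puiseux branches all of the form $y=\xi(x)+ax^t+\cdots$), deduces $I_O(g,f)\ge (v(g)/t)\,v(f)>\deg f\cdot\deg g$, and invokes B\'ezout to force $f\mid g$. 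That one estimate handles every case at once and works for any supraminimal $g$, not just irreducible ones; the ``moreover'' part then follows from additivity of $v$ on products.

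There is a real gap in your Case 2. You correctly note that if $M\cdot\tilde g<0$ then $\tilde g^2<0$ (Cauchy--Schwarz) and $\tilde f\cdot\tilde g\ge 0$, and the light-cone inequality does dispose of the case $(\tilde f\cdot\tilde g)^2>\tilde f^2\tilde g^2$. But when the pair $\{\tilde f,\tilde g\}$ spans a negative semidefinite lattice the numerical data at hand $\left(M^2>0,\ M\cdot\tilde f=0,\ M\cdot\tilde g<0,\ M\cdot\tilde E_j\ge0,\ \tilde f^2<0,\ \tilde g^2<0,\ \tilde f\cdot\tilde g\ge0\right)$ are perfectly consistent — e.g.\ $\tilde f\cdot\tilde g=0$ with both curves in $M^\perp\oplus\R M$ is allowed by the Hodge index theorem — so no purely lattice-theoretic argument (Zariski decomposition included) can produce a contradiction. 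Indeed, if $\tilde f$ is itself in the support of the negative part one does not even get $\tilde f\cdot\tilde g=0$, and the promised ``direct analysis of the coefficient of $\tilde g$'' is left unsubstantiated. What is missing is precisely the geometric input the paper supplies: the valuation ideal has $kw/t$ branches all following $\xi$ to order $t$, forcing $I_O(g,f)$ to exceed $\deg f\cdot\deg g$. Two more minor points: you implicitly work on $S_K$, so you should first reduce to $t\in\Q$ by continuity of $\hat\mu$ (as the paper does); and in the ``moreover'' part the appeal to Nakai--Moishezon needs a small caveat, since the $\widetilde E_j$ with $j<s$ already pair to zero with $L-s_0\nu$ — the right statement is that for each $s>s_0$ some non-exceptional curve meets $L-s\nu$ negatively, which yields a supraminimal irreducible $g$, and the first part of the lemma then does the rest.
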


 In the case above we  say that $f$ computes $\mm{\xi}{t}$.

\begin{proof}
By continuity of $\mm{\xi}{t}$ as a function of $t$, it
 is enough to consider the case $t\in \Q$. Let $v=\vv{\xi}{t}$.

 Let $f$ be as in the claim, and $d=\deg f$.
It will be enough to prove that, for every polynomial $g$ with
degree $e$ and $v(g)=w>\frac{e}{\sqrt{\vol(v)}}$, $f$ divides $g$.
Choose an integer $k$ such that
$kw\in \N$ is an integer multiple of $t$, and consider the ideal
$$I_{kw}=\{h\in \C[x,y]\,|\,v(h)\ge kw\}.$$
A general $h\in I_{kw}$ has $kw/t$ Puiseux series roots, each of them
of the form $\xi(x)+ax^t+\dots$; therefore the local intersection multiplicity
of $h=0$ with $f=0$ is
\begin{equation}\label{genericintersection}
I_0(h,f)\ge\frac{kw}{t}v(f)>\frac{kwd}{t\sqrt{\vol(v)}}=\frac{kwd}{\sqrt{t}}.
\end{equation}
Since obviously $g^k\in I$, the intersection multiplicity $I_0(g^k,f)$ is
bounded below by \eqref{genericintersection}, and therefore
$$
I_0(g,f)>\frac{wd}{\sqrt{t}}=dw\sqrt{\vol(v)}>de,
$$
so $f$ is a component of $g$.

Now assume $\hat\mu(v)>\frac{1}{\sqrt{\vol(v)}}$. So there is a polynomial
 $g\in \C[x,y]$ of degree $e$ with $v(g)>\frac{e}{\sqrt{\vol(v)}}$.
 Since $v(f_1 \cdot f_2)=v(f_1)+v(f_2)$, it follows that
 at least one irreducible component $f$ of $g$,
 satisfies $v(f)>\frac{\deg f}{\sqrt{\vol(v)}}$.
\end{proof}

\begin{Pro}\label{genericneg}
 Assume that $d\in \N, m_1/n_1, \dots, m_r/n_r\in \Q$, with
 $\gcd\{m_i,n_i\}=1$ are such that, for
 a very general $\xi(x)$, there exists an irreducible $f\in \C[x,y]$ with
 $\deg(f)=d$ which decomposes in $\C[[x,y]]$ as a product
 of $r$ irreducible series $f=f_1\dots f_r$ with $\ord_x f_i(x,\xi(x))=m_i$,
 $\ord_x f_i(x,y)=n_i$.
 Consider the tropical polynomial
 $$\mu_f(t)=\sum_{i=1}^r \min(n_i t,m_i).$$
 Then  $\hat\mu(t)\ge \mu_f(t)/d,$ with equality at all values
 of $t$ such that $\mu_f(t)>d \sqrt{t}$.
\end{Pro}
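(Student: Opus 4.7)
The plan is to compute $\vv[f]{\xi}{t}$ directly from the Puiseux parameterizations of the formal factors $f_i$, obtain the identity $\vv[f]{\xi}{t}=\mu_f(t)$, and then upgrade the resulting inequality to an equality under the strict hypothesis by applying Lemma~\ref{negcurves}.

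First I would analyze each factor separately. Since $f_i\in \C[[x,y]]$ is irreducible of order $n_i$ with $\ord_x f_i(x,\xi(x))=m_i$ and $\gcd(m_i,n_i)=1$, an elementary Newton--Puiseux argument shows that its unique branch admits $n_i$ conjugate Puiseux parameterizations $y=\phi_i(\zeta^k x^{1/n_i})$, $k=0,\dots,n_i-1$, where $\zeta=e^{2\pi i/n_i}$ and $\phi_i(s)=\xi(s^{n_i})+c_i s^{m_i}+\cdots$ with $c_i\in \C^*$. Up to a unit in $\C[[x,y]]$ we therefore have $f_i=\prod_k\bigl(y-\phi_i(\zeta^k x^{1/n_i})\bigr)$. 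Substituting $y=\xi(x)+\theta x^t$ and computing $\ord_x$ factor by factor yields
\[
\vv[f_i]{\xi}{t}\,=\,\sum_{k=0}^{n_i-1}\ord_x\bigl(\theta x^t-c_i\zeta^{km_i}x^{m_i/n_i}+\cdots\bigr)\,=\,n_i\min(t,m_i/n_i)\,=\,\min(n_it,m_i).
\]
Summing over $i$ gives $\vv[f]{\xi}{t}=\mu_f(t)$. Since $\mm[kd]{\xi}{t}\ge \vv[f^k]{\xi}{t}=k\mu_f(t)$ for every $k\ge 1$, I obtain the lower bound $\hat\mu(t)\ge \mu_f(t)/d$.

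For the equality under the strict hypothesis $\mu_f(t)>d\sqrt t$, I would invoke the corollary from Section~\ref{sec:quasimonomial} giving $\vol(\vv{\xi}{t})=t^{-1}$, which lets me rewrite the assumption as $\vv[f]{\xi}{t}>\deg(f)/\sqrt{\vol(\vv{\xi}{t})}$. This is precisely the hypothesis of Lemma~\ref{negcurves} applied to the irreducible polynomial $f$; that lemma concludes $\vv[f]{\xi}{t}=\hat\mu(\xi,t)\deg f$, i.e.\ $\hat\mu(t)=\mu_f(t)/d$.

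The main obstacle I anticipate is the Puiseux analysis of each factor: one must justify both the precise form $\phi_i(s)=\xi(s^{n_i})+c_i s^{m_i}+\cdots$ of the expansion (which follows from irreducibility of $f_i$, the coprimality $\gcd(m_i,n_i)=1$, and the order hypotheses, ruling out the case of tangent cone $x^{n_i}$ except when $n_i=1$) and, crucially at the corner value $t=m_i/n_i$, the nonvanishing of the combined leading coefficient $\theta-c_i\zeta^{km_i}$, which relies on the transcendence of $\theta$ over $\C$.
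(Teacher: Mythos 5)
Your proposal is correct and follows essentially the same route as the paper: establish $\vv[f]{\xi}{t}=\mu_f(t)$ (which the paper dismisses as ``immediate'' and you carefully justify via the Puiseux parametrizations of the $f_i$, using transcendence of $\theta$ at the corner $t=m_i/n_i$), and then apply Lemma~\ref{negcurves} together with $\vol(\vv{\xi}{t})=t^{-1}$ to upgrade the inequality to an equality when $\mu_f(t)>d\sqrt t$. The only difference is the level of detail in the first step, and your handling of the tangent-cone case $m_i=n_i$ (forced to $n_i=1$ by coprimality, where the formula still gives $\min(t,1)=1$) is consistent with the claim.
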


\begin{proof}
 It is immediate that $\vv[f]{\xi}{t}=\mu_f(t)$, so the inequality
 $\hat\mu(t)\ge \mu_f(t)/d$ is clear. Now assume that $\mu_f(t)>d \sqrt{t}$.
 This implies that $\vv{\xi}{t}$ is not minimal, and therefore by
 Lemma \ref{negcurves}, $f$ computes
 $\hat\mu (\vv{\xi}{t})=\hat\mu ({t})$.
\end{proof}

\begin{Exa}
The easiest examples of the situation described in Proposition
\ref{genericneg} are given by (smooth) curves of degree 1 and 2.

Namely, for $d=1$, $m_1/n_1=2$,  it is trivial that for general
$\xi(x)$, there exists a degree 1 polynomial $f$ with
 $\ord_x f(x,\xi(x))=2$, $\ord f_i(x,y)=1$; one simply has to
 take the equation of the tangent line to $y-\xi(x)=0$,
 or $f=y-\xi_1(x)$ (where $\xi_1$ denotes the 1-jet).

In the same vein,  for $d=2$, $m_1/n_1=5$, it is easy to show that for general
$\xi(x)$, there exists a degree 2 polynomial $f$ with
 $\ord_x f(x,\xi(x))=5$, $\ord f_i(x,y)=1$, which for general $\xi$
 is irreducible; one simply has to
 take the equation of the conic through the first five points infinitely near
 to $(0,0)$ on the curve $y-\xi(x)=0$ (more fancily, the curvilinear
 ideal $(y-\xi(x))+(x,y)^5\subset \C[x,y]$ has maximal Hilbert function and
 colength 5, and therefore a unique element in degree 2
 up to a constant factor).

Proposition \ref{genericneg} then gives that
$$
\hat\mu(t)=
\begin{cases}
 t & \text{ if }1\le t\le 2, \text{ computed by a line},\\
 2 & \text{ if }2\le t\le 4, \text{ computed by a line},\\
 t/2 & \text{ if }4\le t\le 5, \text{ computed by a conic},\\
 5/2 & \text{ if }5\le t\le 25/4, \text{ computed by a conic}.
\end{cases}
$$
\end{Exa}

In order to construct the supraminimal curves in general position
computing the function $\hat\mu$ for small values of $t$,
we need certain Cremona maps, 
presumably well known,
which have been used by Orevkov in \cite{Ore02} to show sharpness of his
bound on the degree of cuspidal rational curves.

\begin{Pro}\label{cremona8}
Let $K=(p_1,\dots,p_7)$ be a general cluster with $p_{i+1}$ infinitely
near to $p_{i}$ for $i=1,\dots,6$.
There exists a degree 8 plane Cremona map $\Phi_8$  whose cluster of
fundamental points is $K$, with all points weighted with multiplicity 3, and satifying  the
following properties:
\begin{enumerate}
 \item The characteristic matrix of $\Phi_8$ is
 $$\left( \begin {array}{cccccccc} 8&3&3&3&3&3&3&3\\ \noalign{\medskip}-
3&-1&-2&-1&-1&-1&-1&-1\\ \noalign{\medskip}-3&-2&-1&-1&-1&-1&-1&-1
\\ \noalign{\medskip}-3&-1&-1&-1&-2&-1&-1&-1\\ \noalign{\medskip}-3&-1
&-1&-2&-1&-1&-1&-1\\ \noalign{\medskip}-3&-1&-1&-1&-1&-1&-2&-1
\\ \noalign{\medskip}-3&-1&-1&-1&-1&-2&-1&-1\\ \noalign{\medskip}-3&-1
&-1&-1&-1&-1&-1&-2\end {array} \right). $$
\item The inverse Cremona map is of  the same type, i.e., it has the same
characteristic matrix and its fundamental points are a sequence, each
infinitely near to the preceding one.
\item The only curve contracted  by $\Phi_8$ is the nodal cubic which
is singular at $p_1$ and goes through
$(p_2,\dots,p_7)$. The only expansive fundamental point is $p_7$,
whose relative principal curve is the nodal cubic going through the
fundamental points of the inverse map, and singular at the first of them.
\end{enumerate}
\end{Pro}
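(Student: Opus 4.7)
The plan is to realize $\Phi_8$ as the rational map $\P^2 \dashrightarrow \P^2$ associated to the linear system $\mathcal L := |D|$ on $S_K$, where $D = 8L - 3(E_1 + \cdots + E_7)$, and then extract (1)--(3) by a sequence of intersection-theoretic computations.

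First I would verify that $\mathcal L$ has the correct numerical type of a homaloidal net: $D^2 = 64 - 63 = 1$ and $D \cdot \kappa_{S_K} = -24 + 21 = -3$, so by adjunction a smooth member of $\mathcal L$ is rational. Riemann--Roch gives $\chi(\O(D)) = 3$. Since $L \cdot (\kappa_{S_K} - D) < 0$, Serre duality yields $h^2(D) = 0$; the vanishing $h^1(D) = 0$ follows from Kawamata--Viehweg once one checks that $D - \kappa_{S_K} = 11L - 4\sum E_i$ is big and nef. Bigness is immediate from $(D-\kappa_{S_K})^2 = 9 > 0$, and nefness follows by testing against the finitely many negative curves on $S_K$: the $(-2)$-curves $\widetilde E_i = E_i - E_{i+1}$ for $i=1,\ldots,6$, the $(-1)$-curve $E_7$, and the proper transform $\widetilde \Gamma = 3L - 2E_1 - E_2 - \cdots - E_7$ of the unique irreducible nodal cubic singular at $p_1$ and passing through $p_2,\ldots,p_7$ (which exists by the genericity of $K$). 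Hence $h^0(D) = 3$.

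Next I would establish birationality of the induced map. The intersection data
\[
D \cdot \widetilde E_i = 0 \text{ for } i = 1,\ldots,6,\qquad D \cdot \widetilde \Gamma = 0,\qquad D \cdot E_7 = 3,
\]
together with the explicit list of negative curves, identify all effective classes on $S_K$ orthogonal to $D$ as precisely $\widetilde\Gamma, \widetilde E_1,\ldots, \widetilde E_6$. In particular $D$ is nef, so $\mathcal L$ has no fixed components; combined with $D^2 = 1$, this shows that the morphism to $\P^2$ obtained after resolving the indeterminacies of $\mathcal L$ has degree one onto its image, which is all of $\P^2$ by bigness. Therefore the composition with $S_K \to \P^2$ gives a plane Cremona map $\Phi_8$ of degree $8$ whose fundamental cluster is $K$, each point weighted by $3$.

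The characteristic matrix is then pinned down by linear algebra. Its first column is $D$ by construction. Writing $c_2,\ldots, c_8$ for the classes in $\operatorname{Pic}(S_K)$ of the total transforms of the exceptional divisors $E_1',\ldots, E_7'$ over the inverse fundamental cluster, the relations $c_i \cdot D = 0$ and $c_i \cdot c_j = -\delta_{ij}$, together with effectivity and the requirement that the inverse cluster be a chain of infinitely near points (encoded by the proximity pattern that each $E_j' - E_{j+1}'$ must be effective), admit a unique solution, namely the matrix displayed in the statement. Its invariance under transposition gives (2). For (3), the only plane curve orthogonal to $D$ is $\widetilde \Gamma$, so it is the only curve contracted to a point by $\Phi_8$; and $E_7$ is the only exceptional component of $K$ with $D \cdot E_7 > 0$, making it the unique expansive fundamental point, with image the nodal cubic of class $3L' - 2E_1' - E_2' - \cdots - E_7'$ read directly from the characteristic matrix. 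The main obstacle is verifying that each column of the matrix corresponds to an effective, irreducible class (rather than a merely formal solution), which reduces to the existence, for a general cluster $K$, of the various nodal cubics with the prescribed multiplicities at the inverse chain.
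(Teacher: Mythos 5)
Your approach is genuinely different from the paper's. The paper, following Orevkov, constructs $\Phi_8$ birationally: blow up $K$ to get $S_K$, then blow down the $(-1)$-curve $\widetilde\Gamma$ followed by $\widetilde E_1,\dots,\widetilde E_6$ (each becoming a $(-1)$-curve in turn), obtaining a second morphism $S_K\to\widehat\P^2$; the characteristic matrix and properties (2)--(3) are read off from the explicit blow-down. You instead work cohomologically with the class $D=8L-3\sum E_i$, compute $h^0(D)=3$ and establish nefness, and deduce birationality from $D^2=1$; this puts the emphasis on Riemann--Roch and vanishing rather than on the explicit resolution. Both are legitimate, and yours has the advantage of making manifest that $|8L-3\sum E_i|$ is base-point-free and irreducible (the fact cited in the proof of Lemma \ref{lemmagamma}), but it makes (1)--(3) somewhat less transparent, since the matrix must be recovered by indirect lattice computations rather than read off the blow-down.

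There is one genuine gap: your enumeration of negative curves on $S_K$ is incomplete. For a general chain $K=(p_1,\dots,p_7)$ the $(-1)$-curves not among the $\widetilde E_i$ are exactly $E_7$, $L-E_1-E_2$, $2L-E_1-\cdots-E_5$, and $\widetilde\Gamma=3L-2E_1-E_2-\cdots-E_7$ (one can see there are no others by combining $\sum m_i=3d-1$, $\sum m_i^2=d^2+1$ with Cauchy--Schwarz, which forces $d\le 3$). You omit the line $L-E_1-E_2$ and the conic $2L-E_1-\cdots-E_5$. Fortunately both meet $D$ positively ($D\cdot(L-E_1-E_2)=2$, $D\cdot(2L-E_1-\cdots-E_5)=1$) and likewise meet $D-\kappa$ positively ($3$ and $2$ respectively), so neither the nefness of $D-\kappa$ (hence the Kawamata--Viehweg vanishing), the nefness of $D$, nor the identification of the classes orthogonal to $D$ is affected. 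But as written the argument does not justify the completeness of the list of negative curves, and this step should be filled in. A smaller point: passing from ``$D$ is nef with $D^2=1$ and $h^0(D)=3$'' to a genuine morphism onto $\P^2$ requires knowing $|D|$ is base-point-free; this is easiest to justify via Reider's theorem applied to $D=\kappa+(D-\kappa)$, using the same negative-curve data you already have.
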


Recall that the characteristic matrix of a plane Cremona map
is the matrix of base change in the Picard group of the blow
up $\pi:S\rightarrow \P^2$ that resolves the map, from the natural base formed by the
class of a line and the exceptional divisors, to the natural base
in the \emph{image} $\hat \P^2$, formed by the class of a line there
(the homaloidal net in the original $\P^2$) and the
divisors contracted by the map (which are the exceptional divisors
of $\pi':S\rightarrow \hat \P^2$), see \cite{AC02}. We use it later on to compute images
of curves under $\Phi_8$.

\begin{proof}
This proof is taken from \cite[p. 667]{Ore02}; the only modification lies in
the remark that $K$ can be taken general. Indeed,
for $K$  general, there exists a unique irreducible nodal cubic $\Gamma$
with multiplicity 2 at $p_1$ and going through $p_2, \dots, p_7$.
$\Phi_8$ is then defined as follows: let $\pi_K:S_K \rightarrow \P^2$
be the blowup of all points on $K$. The (proper) exceptional divisors
$\widetilde E_1, \dots, \widetilde E_6$ are $(-2)$-curves,
$E_7$ is a $(-1)$ curve. The proper transform
$\widetilde \Gamma\subset S_K$ is another $(-1)$-curve that meets
the (proper) exceptional divisors $\widetilde E_1$ and $E_7$. Blow down
$\widetilde \Gamma$, $\widetilde E_1, \dots \widetilde E_6$ to obtain another map
$\pi_K':S_K\rightarrow \hat \P^2$. Then take $\Phi_8=\pi_K'\circ \pi_K^{-1}$.
All the stated properties are easy to check.
\end{proof}

Denote $F_{-1}=1$, $F_0=0$ and $F_{i+1}=F_i+F_{i-1}$ the Fibonacci numbers,
and $\phi=(1+\sqrt{5})/2=\lim F_{i+1}/F_i$ the ``golden ratio''.

\begin{Pro}
 For each odd $i\ge 1$, there exist rational curves $C_i$ of degree $F_{i}$
 with a single cuspidal singularity of characteristic exponent $F_{i+2}/F_{i-2}$
 whose six singular free points are in general position. These curves become
 $(-1)$-curves in their embedded resolution, and are supraminimal for $t$
 in the interval $\left(\frac{F_{i}^2}{F_{i-2}^2},\frac{F_{i+2}^2}{F_{i}^2}\right)$.
\end{Pro}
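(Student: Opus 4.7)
The plan is to construct the curves $C_i$ inductively by iterating the degree $8$ Cremona map $\Phi_8$ of Proposition~\ref{cremona8}, following Orevkov~\cite{Ore02}.

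\emph{Base cases.} For $i=1$, take $C_1$ to be the tangent line to the germ $\Gamma\colon y=\xi(x)$ at the origin: it has degree $F_1=1$, contact of order $2=F_3/F_{-1}$ with $\Gamma$, and is tautologically a $(-1)$-curve in its embedded resolution. For $i=3$, take $C_3$ to be the unique smooth conic having fifth-order contact with $\Gamma$ at the origin; for very general $\xi$ this conic is irreducible, of degree $F_3=2$ with characteristic exponent $5=F_5/F_1$. In both cases the ``six free points'' condition is vacuous or immediate.

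\emph{Inductive step.} Given $C_i$ with the claimed properties, choose the cluster of fundamental points $(p_1,\dots,p_7)$ of $\Phi_8$ to consist of the first several free centers of the cusp of $C_i$ along $\Gamma$ (supplemented by further general infinitely near points if needed), and set $C_{i+2}:=\Phi_8^{-1}(C_i)$. Using the characteristic matrix of Proposition~\ref{cremona8} together with the Fibonacci identity $3F_i=F_{i+2}+F_{i-2}$, a direct computation gives
\[
\deg C_{i+2}=8F_i-3\sum_{j=1}^{7}m_j(C_i)=F_{i+2},
\]
while an analogous matrix calculation on the multiplicities at the new inverse centers shows that $C_{i+2}$ has a single cuspidal singularity of characteristic exponent $F_{i+4}/F_i$. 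Because $\Phi_8$ restricts to an isomorphism on $S_K$ away from the contracted anticanonical nodal cubic, the intersection numbers are preserved, so $C_{i+2}^2=C_i^2=-1$ and $C_{i+2}\cdot\kappa=-1$, i.e.\ $C_{i+2}$ is again a $(-1)$-curve. The Cassini identity $F_i^2-F_{i+2}F_{i-2}=(-1)^i=-1$ (for odd $i$) is precisely what makes the arithmetic genus vanish, confirming rationality and forcing the singularity to be a single cusp with the Fibonacci multiplicity sequence.

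\emph{Supraminimal interval.} The unique branch of $C_i$ at its cusp gives $r=1$, $n_1=F_{i-2}$, $m_1=F_{i+2}$ in Proposition~\ref{genericneg}, so the tropical function is
\[
\mu_{C_i}(t)=\min(F_{i-2}\,t,\,F_{i+2}).
\]
An elementary check shows $\mu_{C_i}(t)/F_i>\sqrt t$ precisely on $(F_i^2/F_{i-2}^2,\,F_{i+2}^2/F_i^2)$, with equality at the endpoints, and Proposition~\ref{genericneg} then gives $\hat\mu(t)=\mu_{C_i}(t)/F_i$ throughout this interval. The main obstacle is the inductive multiplicity bookkeeping: verifying via the characteristic matrix that $\Phi_8^{-1}(C_i)$ remains irreducible with a single cusp of the predicted characteristic exponent requires propagating the full Fibonacci proximity pattern carefully, and one must simultaneously check that the generality of the free centers is preserved at each iteration.
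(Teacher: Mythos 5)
Your plan follows the same Orevkov-style strategy as the paper's proof, which likewise rests on iterating the degree-$8$ Cremona map $\Phi_8$; the supraminimal-interval computation at the end (using the tropical polynomial $\min(F_{i-2}t,F_{i+2})$ and the Cassini/Catalan identity $F_i^2-F_{i-2}F_{i+2}=-1$ for odd $i$) is correct and is exactly the paper's argument for that part.

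However, the inductive step has a genuine arithmetic gap. You set $C_{i+2}=\Phi_8^{-1}(C_i)$ and assert
$\deg C_{i+2}=8F_i-3\sum_{j=1}^{7}m_j(C_i)=F_{i+2}$,
but solving for $\sum m_j$ gives $\sum m_j=(8F_i-F_{i+2})/3=(5F_i+F_{i-2})/3$, which is not an integer when $i\equiv 3\pmod 4$ (already for $i=3$ one gets $11/3$). So no Cremona map of type $\Phi_8$ can carry $C_3$ of degree $F_3=2$ to a curve of degree $F_5=5$, and the step by two does not close. The Orevkov construction in fact advances by four indices: the two chains $C_1\to C_5\to C_9\to\cdots$ and $C_3\to C_7\to C_{11}\to\cdots$ each iterate $\Phi_8$, and the correct Fibonacci identity to invoke is $F_{i+4}=8F_i-3F_{i-2}$, so that the total multiplicity of $C_i$ at the seven fundamental points is $F_{i-2}$ (always an integer). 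Relatedly, your description of the fundamental cluster is oriented the wrong way: if the fundamental points of $\Phi_8$ sit on the cusp cluster of $C_i$, then $\sum m_j$ is large and $\Phi_8$ \emph{decreases} the degree (e.g.\ $C_5$, with multiplicities $2,2,2,2,2,2,1$ at the seven points, maps to the line $C_1$); to increase the degree one must place the fundamental cluster elsewhere, and the new curve acquires its cusp at the fundamental points of $\Phi_8^{-1}$, whence the generality statement. The paper avoids all of this bookkeeping by citing Orevkov's Theorem~C for existence and only arguing generality (via Proposition~\ref{cremona8}(2)) and the $(-1)$-curve property (the iteration starts from lines, which are $(-1)$-curves after resolution). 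If you want a self-contained inductive proof, you must split into the two residue classes mod $4$, use the identity $F_{i+4}=8F_i-3F_{i-2}$, and carefully track where the new cusp cluster sits relative to the contracted nodal cubic.
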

Note that for $i=1$ the line is actually not singular (the ``characteristic exponent''
is 2, an integer) but the statement in that case means that the line goes through
the first two of six infinitely  near points in general position, i.e.,
the exponent is interpreted as $m_i/n_i=2$ in Proposition \ref{genericneg}.

\begin{proof}
 The existence of such curves, without the generality statement, is
 \cite[Theorem C, (a) and (b)]{Ore02}. Since the construction
 goes by recursively applying the rational map $\Phi_8$, and the
 free singular points of $C_i$ are exactly the seven fundamental
 points of $\Phi_8$, it follows from \ref{cremona8} that these can
 be chosen to be general. They are $(-1)$-curves after resolution
 because the starting point of the construction are the two lines tangent
 to the two branches of the nodal cubic $\Gamma$
 (which becomes an exceptional divisor after $\Phi_8$) i.e.,
 $(-1)$-curves (each is  a line through a point and an infinitely near point).

 Now, with notation as in Proposition \ref{genericneg},
 $$\mu_f(t)=
\begin{cases}
 \frac{F_{i-2}}{F_{i}}t & \text{if }t \le \frac{F_{i+2}}{F_{i-2}},\\
 \frac{F_{i+2}}{F_{i}} & \text{if }t \ge \frac{F_{i+2}}{F_{i-2}}.
\end{cases}
$$
supraminimality in the claimed interval follows.
\end{proof}

\begin{Cor} For every odd $i$,
 $$\hat\mu(t)=
\begin{cases}
 \frac{F_{i-2}}{F_{i}}t & \text{if }t \in \left[\frac{F_{i}^2}{F_{i-2}^2},\frac{F_{i+2}}{F_{i-2}}\right],\\
 \frac{F_{i+2}}{F_{i}} & \text{if }t \in \left[\frac{F_{i+2}}{F_{i-2}},\frac{F_{i+2}^2}{F_{i}^2}\right].
\end{cases}
$$

\end{Cor}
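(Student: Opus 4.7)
The statement is essentially a direct corollary of the preceding proposition combined with Proposition \ref{genericneg}, so my plan is to just trace through the formulas. First I would specialize Proposition \ref{genericneg} to the unicuspidal curves $C_i$ of the previous proposition: since each $C_i$ has a single cuspidal branch, the decomposition has $r=1$, with $(m_1,n_1)=(F_{i+2},F_{i-2})$ (and these are coprime because consecutive Fibonacci numbers are coprime, hence so are $F_{i-2}$ and $F_{i+2}=F_{i-2}+F_{i-1}+F_i$). With $d=\deg(C_i)=F_i$, the tropical polynomial reduces to
\[
\mu_f(t)=\min(F_{i-2}\,t,\,F_{i+2}),
\]
and the two pieces cross exactly at $t=F_{i+2}/F_{i-2}$.

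Next I would divide by $d=F_i$ and invoke the equality clause of Proposition \ref{genericneg}: $\hat\mu(t)=\mu_f(t)/d$ at every $t$ where $\mu_f(t)>d\sqrt t$. On the linear branch $\mu_f(t)=F_{i-2}t$, the inequality $F_{i-2}t>F_i\sqrt t$ is equivalent to $t>F_i^2/F_{i-2}^2$; on the constant branch $\mu_f(t)=F_{i+2}$, the inequality $F_{i+2}>F_i\sqrt t$ is equivalent to $t<F_{i+2}^2/F_i^2$. Checking that $F_i^2/F_{i-2}^2<F_{i+2}/F_{i-2}<F_{i+2}^2/F_i^2$ (equivalent to $F_i^2<F_{i-2}F_{i+2}<F_{i+2}^2/\phi^2\cdot \phi^2=F_{i+2}^2$, which follows from the standard Fibonacci identity $F_{i-2}F_{i+2}-F_i^2=\pm1$) confirms that the supraminimal interval $\bigl(F_i^2/F_{i-2}^2,\,F_{i+2}^2/F_i^2\bigr)$ splits at $F_{i+2}/F_{i-2}$ into the two subintervals appearing in the corollary, and on each of them $\hat\mu(t)$ coincides with the corresponding piece of $\mu_f(t)/F_i$.

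To close off the intervals, I would use continuity of $\hat\mu$ (proved earlier via Proposition \ref{continuoust} and the remarks following it), together with the fact that $\hat\mu(t)\ge\sqrt t$ by minimality. At the left endpoint $t=F_i^2/F_{i-2}^2$ one computes $F_{i-2}t/F_i=F_i/F_{i-2}=\sqrt t$, and at the right endpoint $t=F_{i+2}^2/F_i^2$ one computes $F_{i+2}/F_i=\sqrt t$; at the intermediate point $t=F_{i+2}/F_{i-2}$ the two pieces agree by construction. Thus the formula extends continuously to the closed intervals as stated.

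There is no real obstacle here beyond the bookkeeping: the only slightly delicate point is verifying the ordering $F_i^2/F_{i-2}^2<F_{i+2}/F_{i-2}<F_{i+2}^2/F_i^2$ of the three critical values, which amounts to the Cassini-type identity $F_{i-2}F_{i+2}-F_i^2=\pm1$, and confirming that the ``general position'' hypothesis of Proposition \ref{genericneg} is met by the $C_i$—but this is exactly what the previous proposition guarantees via the construction of $C_i$ from the general Cremona map $\Phi_8$ of Proposition \ref{cremona8}.
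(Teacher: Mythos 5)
Your argument follows exactly the route the paper intends: the corollary is a direct consequence of the preceding Proposition about the curves $C_i$ together with Proposition~\ref{genericneg}, and your bookkeeping (identifying $d=F_i$, $(m_1,n_1)=(F_{i+2},F_{i-2})$, computing where $\mu_f(t)>d\sqrt t$, and then passing to the closed intervals by continuity and the universal bound $\hat\mu(t)\ge\sqrt t$) is correct.

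Two small slips are worth fixing. First, the identity you cite, $F_{i+2}=F_{i-2}+F_{i-1}+F_i$, is false (e.g.\ $i=3$ gives $1+1+2=4\neq 5=F_5$); the correct relation is $F_{i+2}=2F_i+F_{i-1}=3F_i-F_{i-2}$. Coprimality of $F_{i-2}$ and $F_{i+2}$ for odd $i$ still holds, most cleanly via $\gcd(F_m,F_n)=F_{\gcd(m,n)}$ and the observation that $\gcd(i-2,i+2)$ divides $4$ and both indices are odd. Second, stating the Catalan-type identity as $F_{i-2}F_{i+2}-F_i^2=\pm1$ does not establish the required inequality $F_i^2<F_{i-2}F_{i+2}$: the sign matters. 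For the odd $i$ appearing here one has $F_i^2-F_{i-2}F_{i+2}=(-1)^{i-2}F_2^2=(-1)^i=-1$, so $F_{i-2}F_{i+2}-F_i^2=+1>0$, which is exactly what is needed to order the three critical values. (The parenthetical $F_{i+2}^2/\phi^2\cdot\phi^2=F_{i+2}^2$ is a tautology and should be dropped.) With those corrections the proof is complete and agrees with the paper.
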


\begin{Rem}
We proved that supraminimal values of $\hat\mu$ are computed
by a single irreducible curve. In contrast, we see that the minimal
values at $t=F_{i+2}^2/F_{i}^2$ are computed both by $C_i$ and $C_{i+2}$.
In fact, the two divisors $F_{i+2}C_i$ and $F_iC_{i+2}$ generate a pencil whose
general members also compute $\hat\mu(t)$; they are
again unicuspidal rational curves classified in
\cite[Theorem 1.1, (c)]{FLMN07}.
\end{Rem}

\begin{Rem}\label{QuadField}
In addition to the preceding family of curves,
nine additional $(-1)$-curves compute $\hat\mu(t)$ for some range of $t$
(see Table \ref{table}).
The existence of these curves is proved as follows.
$D_1$ and $D_2$ are well known.
The rest are obtained by applying the Cremona map $\Phi_8$
to already constructed curves (the names chosen indicate that
curve $X^*$ is built from curve $X$).
Recall that, because the intervals where a degree $d$ curve $C$
computes $\hat\mu$ are those where $\mu_f(t)\ge d\sqrt{t}$
(Proposition \ref{genericneg} and continuity of $\hat\mu$)
the endpoints correspond to values of $t$ where $\hat\mu$
is minimal. Note that all such endpoints given in Table
\ref{table} are squares in $\mathbb{Q}$ or in the quadratic
field to which they belong. This is due to the fact that
they satisfy $\mu_f(t)= d\sqrt{t}$, and $\mu_f(t)$ is
a piecewise affine linear function of $t$ with rational
coefficients.
\end{Rem}

\begin{table*}
\centering
\begin{tabular}{@{}cccc@{}}
\toprule
Name & $(d;v_i)$ & $m_i/n_i$ & $t$ \\ \midrule
$D_1$ & $(3 ;2,1^{\times 6})$ & 1,7 & $\left[\phi^4,\left(\frac{8}{3}\right)^2\right]_{_{}}$\\
$D_2^*$ & $(48 ;18^{\times 7},3,2^{\times 7})$ & 7, $\left( 7+\frac{1}{8}\right)^{\times 2}$, 8 &
  $\left[\left( {\frac {24+\sqrt{457}}{17}} \right) ^{2},\left( 24-\sqrt {455} \right) ^{2}\right]_{_{}}$\\
$C_1^{**}$ & $(64 ;24^{\times 7},3^{\times 7},1^{\times 2})$ & $7^{\times 2}$, $7+\frac{1}{7+1/2}$, $7+\frac{1}{7}$ &
  $\left[\left( {\frac {32-\sqrt {177}}{7}} \right) ^{2},
  \left( {\frac {16+\sqrt {179}}{11}} \right) ^{2}\right]_{_{}}$\\
$D_1^*$ & $(24 ;9^{\times 7},2,1^{\times 6})$ & 7, $7+\frac{1}{7}$, 8 &
  $\left[\left( \frac{6+\sqrt{22}}{4}\right)^2,
  \left( 12-\sqrt {87} \right) ^{2}\right]_{_{}}$\\
$C_5^{*}$ & $(40 ;15^{\times 7},2^{\times 6},1^{\times 2})$ & $7^{\times 2}$, $7+\frac{1}{6+1/2}$ &
  $\left[ \left( {\frac {20+\sqrt {218}}{13}} \right) ^{2},
  \left(\frac{107}{40}\right)^2\right]_{_{}}$\\
$C_3^*$ & $(16 ;6^{\times 7},1^{\times 5})$ & 7, $7+\frac{1}{5}$ &
  $\left[ \left( \frac{8+\sqrt {29}}{5} \right) ^{2},
  \left(\frac{43}{16}\right)^2\right]_{_{}}$\\
$D_3$ & $(35 ;13^{\times 7},4,3^{\times 3})$ & $7+\frac{1}{4}$, 8&
  $\left[\left(\frac{35}{13}\right)^2,
  \left( {\frac {35-\sqrt {877}}{2}} \right) ^{2}\right]_{_{}}$\\
$C_1^*$ & $(8 ;3^{\times 7},1^{\times 2})$ & 7, $7+\frac{1}{2}$ &
  $\left[\left(\frac{4+\sqrt{2}}{2}\right)^2,\left(\frac{22}{8}\right)^2\right]_{_{}}$\\
$D_2$ & $(6 ;3,2^{\times 7})$ & 1, 8 &
  $\left[\left( \frac{3+\sqrt {7}}{2} \right) ^{2},
  \left(\frac{17}{6}\right)^2\right]_{_{}}$\\
\bottomrule
\end{tabular}
\caption{Sporadic supraminimal curves. Here $(d;v_i)$ denotes
the degree and multiplicities sequence, with $^{\times k}$ meaning
$k$-tuple repetition, and $m_i/n_i$ follows the notation of Proposition
\ref{genericneg}, with $^{\times 2}$ again meaning
repetition.
\label{table}}
\end{table*}

\begin{Exa}
As an example, let us show the existence of $D_1^*$.
Let $K=(p_1, \dots, p_8)$ be a general cluster with each point infinitely
near to the preceding one; we want to show that there is an irreducible
curve of degree 24 with three branches, two smooth, one of which
goes through $(p_1, \dots, p_7)$ and the other through all of $K$,
and one singular, with characteristic exponent 50/7.
Because $K$ is general, there exist a cubic curve $D_1$
with multiplicities $[2,1^6,0]$ on $K$ and another cubic $\Gamma$
through $K$ that has
a node at some other point $q_1$. Choose one of the branches of $\Gamma$
and let $q_2, \dots, q_7$ be the points infinitely near to $q_1$ on that
branch. Apply the Cremona map $\Phi_8$ based on $(q_1,\dots,q_7)$:
then $D_1^*=\Phi_8(D_1)$.
\end{Exa}

All these computations together show that indeed, $(-1)$-curves compute
$\hat\mu$ in the anticanonical range:

\begin{Teo}\label{lastthm}
 For $t\in A$, $\hat\mu(t)$ is computed by $(-1)$-curves; more precisely,
 the (infinitely many) curves $C_i$ and 7 of the curves in table \ref{table}.
\end{Teo}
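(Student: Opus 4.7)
The plan is to synthesize the Mori cone descriptions of Section~\ref{sec:anticanonical} with the explicit supraminimal curves constructed in this section and listed in Table~\ref{table}. First I would invoke Remark~\ref{mu-nef} applied to $D=L$ a line in $\P^2$, which gives
\[
\tfrac{1}{\hat\mu(t)}\,=\,\max\bigl\{s\in\R\,:\,L-s{\textstyle\sum} v_iE_i\text{ is nef on }S_K\bigr\}.
\]
By Proposition~\ref{polyhedral} for $t\le 7$ and Corollary~\ref{MoriCone} for $t=7+\tfrac{1}{n_2}$, the Mori cone $\Mor(S_K)$ is generated by the exceptional components $\widetilde E_i$, the proper transform $\widetilde\Gamma$ of a general nodal cubic through $p_1,\dots,p_7$, and the remaining $(-1)$-curves. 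Intersecting $L-s\sum v_iE_i$ with $\widetilde E_i$ vanishes identically by the proximity equalities, so these impose no constraint. For $t\in\{7\}\cup\{7+\tfrac{1}{n_2}\}_{n_2}$ the class $\widetilde\Gamma$ is itself a $(-1)$-curve (namely $D_1$ in Table~\ref{table}), and for $t<7$ a direct check shows $v(\widetilde\Gamma)/3\le\sqrt t$. Thus any supraminimal value $\hat\mu(t)>\sqrt t$ on $A$ is realized by a $(-1)$-curve.

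Next I would compute for each curve $C$ in the Fibonacci family $\{C_i\}$ and for each entry of Table~\ref{table} the piecewise-linear function $t\mapsto \mu_C(t)/\deg C$ from Proposition~\ref{genericneg}, and verify that it exceeds $\sqrt t$ precisely on the interval recorded in this section or in the table. The bookkeeping shows that these intervals cover $[1,\,7+\tfrac{1}{9}]$, and that at each such $t$ the pointwise maximum among all listed curves is attained by exactly one of them; two table entries turn out to be dominated everywhere by neighboring ones and hence are redundant, leaving the seven curves named in the statement. Outside $[1,\,7+\tfrac{1}{9}]$ but still in $A$ we have $\hat\mu(t)=\sqrt t$, realized on the boundary of these intervals; for $t=9$ this is the content of Proposition~\ref{squares} and can be read as a limit of $(-1)$-curve ratios of the form $(3d-1)/d$ on the 9-point blowup.

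The main obstacle is ruling out any further $(-1)$-curve on $S_K$ producing a larger ratio. For $t\in[1,7]$ this is a finite check, since Proposition~\ref{polyhedral} bounds the number of non-exceptional $(-1)$-curves by $s$, so one enumerates them using adjunction and the characteristic matrix of $\Phi_8$ from Proposition~\ref{cremona8}. For $t=7+\tfrac{1}{n_2}$ with $n_2\ge 9$ infinitely many $(-1)$-curves exist by Corollary~\ref{MoriCone}; here I would use the orbit description in Remark~\ref{cremona-1} to show that every $(-1)$-curve not on our list arises from a listed one by a quadratic transformation whose effect forces $v(C)/\deg C\le\sqrt t$ on $A$. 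This exhaustiveness check is where the bulk of the work lies.
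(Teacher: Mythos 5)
Your first paragraph substitutes a heavier Mori-cone argument for the paper's actual mechanism, and your third paragraph then labors over an exhaustiveness check that is in fact unnecessary. The central tool here is Lemma \ref{negcurves} (invoked through Proposition \ref{genericneg}), and it is not merely a lower bound: it asserts that if $\vv[f]{\xi}{t}>\deg(f)\sqrt t$ for a single irreducible $f$, then $\hat\mu(t)=\vv[f]{\xi}{t}/\deg f$ holds \emph{with equality}. Once one supraminimal curve is produced at a given $t$, no other curve — $(-1)$ or otherwise — can give a larger ratio, so there is nothing to rule out and neither Proposition \ref{polyhedral} nor Corollary \ref{MoriCone} is needed for this theorem. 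The paper's implicit proof is exactly: construct the $C_i$ and the table curves for very general $\xi$, compute the tropical polynomials $\mu_f(t)$ and the intervals where $\mu_f(t)>\deg(f)\sqrt t$, observe that these intervals (together with their endpoints, where $\hat\mu=\sqrt t$ by continuity) cover $A$, and note that each curve involved is a $(-1)$-curve.

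Two of your factual claims about the table are also wrong and change the shape of the argument. First, $C_1^{**}$ and $C_5^*$ are not ``dominated by neighboring entries'': by Lemma \ref{negcurves} the supraminimal curve at a given $t$ is unique, so different curves' supraminimality intervals overlap at most at endpoints. Those two are excluded simply because their intervals sit strictly between consecutive points $7+\tfrac1{n+1}$ and $7+\tfrac1{n}$ and therefore miss the discrete set $A\cap(7,8)=\{7+\tfrac1n\}_n$ entirely. Second, it is \emph{not} the case that $\hat\mu(t)=\sqrt t$ on $A$ outside $[1,7+\tfrac19]$: at $t=7+\tfrac1n$ for $1\le n\le 8$ the valuation is strictly supraminimal, and these are precisely the points accounted for by $D_2^*$, $D_1^*$, $C_3^*$, $D_3$, $C_1^*$ and $D_2$ — which is why seven table entries, not one, appear in the statement. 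Finally, your claim that $v(\widetilde\Gamma)/3\le\sqrt t$ for all $t<7$ fails on $(\phi^4,7)$ (this is the supraminimality interval of $D_1=\widetilde\Gamma$); it does not damage your conclusion because $\widetilde\Gamma$ is itself a $(-1)$-curve on $S_K$ for $t$ in that range, but the dichotomy as stated is incorrect.
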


Figure \ref{graph} shows $\hat\mu(t)$ in the ranges where it is known,
together with the lower bound $\sqrt{t}$.

The two curves $C_1^{**}$ and $C_5^*$ compute $\hat\mu$ in ranges
of $t$ which do not intersect the anticanonical locus $A$. We expect that
there are no more curves with such behavior, and so propose the following
strengthening of conjecture \ref{nqmvconj}:

\begin{Con}\label{strong}
 Let $t\in \R$ be such that $\hat\mu(t)>\sqrt t$. Then $\mu_C(t)>\sqrt t$
 for a curve $C$ which is either on the list of table \ref{table} or one of the $C_i$.
 Equivalently, if $t>7+1/9$ is not contained in any one of the
 intervals of table \ref{table}, then a very general valuation
 $\vv{\xi}{t}$ is minimal.
\end{Con}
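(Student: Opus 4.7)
The plan is to combine the Mori cone description from Section \ref{sec:anticanonical} with the duality of Remark \ref{mu-nef}. By Propositions \ref{anticanonicalt}, \ref{polyhedral}, \ref{polyhedral-integers}, and Corollary \ref{MoriCone}, for every $t\in A$ the Mori cone $\Mor(S_K)$ is (at most countably) generated by the exceptional components $\widetilde E_i$, the proper transform $\widetilde\Gamma$ of the nodal cubic through the free centers of $v$, and the $(-1)$-curves. Remark \ref{mu-nef} then identifies
\[
\hat\mu(t) \;=\; \max_{C}\; \frac{C\cdot \sum_i v_i E_i}{C\cdot \pi_K^*L}\ ,
\]
where $C$ ranges over the Mori cone generators with $C\cdot L>0$. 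The proximity equalities $v_j=\sum_{p_i\succ p_j} v_i$ force $\widetilde E_j\cdot \sum v_i E_i=0$, so the exceptional generators impose no constraint, and the computation reduces to a maximum over the $(-1)$-curves (noting that $\widetilde\Gamma$ itself is a $(-1)$-curve precisely when $t\ge 7$).

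Next I would verify that each curve in the proposed list---the Orevkov curves $C_i$ for odd $i\ge 1$ together with the seven sporadic curves $D_1,D_2^*,D_1^*,C_3^*,D_3,C_1^*,D_2$ of Table \ref{table}---is indeed a $(-1)$-curve on $S_K$ for suitable $t$, and compute its degree and intersection multiplicities at each center of the cluster. The constructions in Section \ref{sec:supraminimal} produce the $C_i$ iteratively via Orevkov's recipe and the Cremona map $\Phi_8$ of Proposition \ref{cremona8}, and the sporadic curves by one or two further applications of $\Phi_8$ to a seed $D_1$, $D_2$, or $C_i$. For each such $C$, Proposition \ref{genericneg} produces the tropical function $\mu_C(t)/\deg C$ and pins down the closed interval of $t$ on which $\mu_C(t)\ge \sqrt t\,\deg C$, so that $C$ is supraminimal and computes $\hat\mu(t)$ there.

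The combinatorial core is to check that these supraminimal intervals exhaust every $t\in A$ at which $\hat\mu(t)>\sqrt t$. The Fibonacci intervals for the $C_i$ are contiguous and cover $[1,\phi^4)$; the interval of $D_1$ is $[\phi^4,64/9]=[\phi^4,7+\tfrac{1}{9}]$, which handles $t=7$ together with $t=7+1/n_2$ for all $n_2\ge 9$; and the remaining six sporadic curves cover the short intervals around each of $t=7+1/n_2$ for $n_2=8,\dots,2$ and around $t=8$. The point $t=9$ is minimal by Proposition \ref{squares}. The exclusion of $C_1^{**}$ and $C_5^*$ follows from a direct calculation with the endpoints of Table \ref{table}: their supraminimal intervals fall strictly between consecutive values $7+1/n_2$ and hence miss $A$.

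The main obstacle is the converse: ruling out any $(-1)$-curve outside the list that would produce a strictly larger bound somewhere on $A$. For $t\le 7$ and for $t=7+1/n_2$ with $n_2\le 8$ this is a finite enumeration, since Proposition \ref{polyhedral} and \cite[Corollary 5]{GM05} bound the number of non-exceptional $(-1)$-curves on $S_K$ by $s$. For $n_2\ge 9$ there are infinitely many $(-1)$-curves by Corollary \ref{MoriCone}, and here I would exploit the Cremona symmetry $\Phi_8$ (Proposition \ref{cremona8}): after finitely many applications of $\Phi_8$ combined with the unloading bookkeeping of Remarks \ref{UnloadRem}--\ref{ClusterDivRem}, every $(-1)$-curve on $S_K$ reduces to a seed curve on the list, so its contribution $v(C)/\deg C$ is already recorded as $\mu_{C'}(t)/\deg C'$ for some $C'$ among the listed curves.
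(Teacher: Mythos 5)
This is a \emph{conjecture}, not a theorem: the paper offers no proof of it, and the authors state explicitly (see the remark following Conjecture~\ref{strong}) that their evidence for the crucial part --- that the only supraminimal $(-1)$-curves in $[7,8]$ are the ones listed --- is ``experimental, obtained by a computer search.'' So any claimed proof must be checked against this absence rather than against a proof in the paper, and that changes how one should read your proposal.

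The core gap is that your argument is confined to the anticanonical locus $A=[1,7]\cup\{7+1/n\}_n\cup\{9\}$, whereas the conjecture is about all $t\in\R$. Everything you invoke from Section~\ref{sec:anticanonical} --- the Mori cone being generated by $\widetilde\Gamma$, the $\widetilde E_i$, and $(-1)$-curves (Propositions~\ref{polyhedral}, \ref{polyhedral-integers}, Corollary~\ref{MoriCone}), the adjunction argument, $\Gamma_K$ being an effective anticanonical divisor --- requires $S_K$ to be anticanonical, and by Proposition~\ref{anticanonicalt}(2), for very general $\xi$ this fails precisely when $t\notin A$. In the intervals $(7+\tfrac{1}{n+1},7+\tfrac1n)$, in $(8,9)$, and for all $t>9$, the paper gives you no structural control over negative curves at all. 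Theorem~\ref{lastthm} is the honest statement of what the Mori-cone machinery yields: it computes $\hat\mu(t)$ for $t\in A$, nothing more. Your proposal's first three paragraphs essentially re-derive Theorem~\ref{lastthm}; they do not touch the conjecture's actual content.

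Your final paragraph is where the real issue sits, and you are right to flag it as ``the main obstacle,'' but you then assert without justification that ``after finitely many applications of $\Phi_8$ $\ldots$ every $(-1)$-curve on $S_K$ reduces to a seed curve on the list.'' This is not an argument; it is a restatement of the conjecture. The Cremona map $\Phi_8$ of Proposition~\ref{cremona8} is defined on a cluster of seven infinitely near general points, and there is no result in the paper, nor any obvious one, showing that iterating $\Phi_8$ and the $\widetilde E_i$ reflections exhausts, or even acts transitively on, the set of $(-1)$-classes on $S_K$ for general $t$ in a way that terminates on the listed curves. For $t\notin A$ there isn't even a well-controlled Mori cone to work in. The paper's own remark tells you that for $t>(17/6)^2$ one can rule out supraminimal $(-1)$-curves by Cremona arguments, but that still leaves open (a) whether every supraminimal curve is a $(-1)$-curve, and (b) the interval $[7,8]$, both of which the authors mark as conjectural. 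If you want to turn your outline into a proof, these are exactly the two statements you would have to establish, and nothing in your proposal addresses either.
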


Obviously, Conjecture \ref{strong} implies Conjecture \ref{mainconj}.

\begin{Rem}
 For $t>(17/6)^2$, it is possible to show (using Cremona maps)
 that no $(-1)$-curve is ever supraminimal. Thus conjecture \ref{strong}
 splits naturally into two conjectures: first, that all supraminimal curves
 are $(-1)$-curves, and second, that the only supraminimal $(-1)$-curves
 in the interval $[7,8]$ are the ones above. Our evidence for the latter
 statement is experimental, obtained by a computer search.
\end{Rem}

\section*{Acknowledgements}
This research was supported through the programme ``Research in Pairs''
by the Mathematisches Forschungsinstitut Oberwolfach in 2013.
Alex K\"uronya was partially supported  by the DFG-Forschergruppe 790
''Classification of Algebraic Surfaces and Compact Complex Manifolds``,  by
the DFG-Graduiertenkolleg 1821 ''Cohomological Methods in Geometry``,  and by
the OTKA grants 77476 and  81203 of the Hungarian Academy of Sciences.
Brian Harbourne's work on this project
was partially sponsored by the National Security Agency under Grant/Cooperative
agreement ``Advances on Fat Points and Symbolic Powers,'' Number H98230-11-1-0139.
The United States Government is authorized to reproduce and distribute reprints
notwithstanding any copyright notice.
Joaquim Ro\'e was partially supported by MINECO grant
MTM2013-40680-P.
Marcin Dumnicki and Tomasz Szemberg were partially supported
by the National Science Centre, Poland, grant 2014/15/B/ST1/02197.
We thank the referees for valuable comments and remarks.

\bibliographystyle{amsplain}

\end{document}